\pgfplotsset{compat=newest, ticks=none}
\newsavebox{\measure@tikzpicture}
	\def\tikz@width{#1}%
	\def\tikzscale{1}\begin{lrbox}{\measure@tikzpicture}%
	\edef\tikzscale{\pgfmathresult}%
\tikzset{->-/.style={decoration={
			markings,
			mark=at position #1 with {\arrow{>}}},postaction={decorate}}}
\tikzset{cross/.style={cross out, draw=black, minimum size=2*(#1-\pgflinewidth), inner sep=0pt, outer sep=0pt},
	cross/.default={3pt}}
\NewDocumentCommand\set{mg}{%
	\ensuremath{\bigl\{ #1 \IfNoValueTF{#2}{}{\bigm| #2} \bigr\}}%
}
\newcommand{%
	\def\svgwidth{}
	\import{./figures/}{.pdf_tex}
}[2]{%
	\def\svgwidth{#1}
	\import{./figures/}{#2.pdf_tex}
}
\newcommand{\bigzero}{\mbox{\normalfont\Large\bfseries 0}}
\newcommand{\R}{\mathbb{R}}
\newcommand{\RP}{\mathbb{RP}}
\newcommand{\C}{\mathbb{C}}
\newcommand{\N}{\mathbb{N}}
\newcommand{\pib}{\overline{\pi}}
\newcommand{\NHIM}{\mathcal{N}}
\newcommand{\e}{\varepsilon}
\renewcommand{\a}{\alpha}
\renewcommand{\b}{\beta}
\renewcommand{\d}{\delta}
\newcommand{\g}{\gamma}
\newcommand{\ra}{r_\a}
\newcommand{\ba}{\b_\a}
\newcommand{\da}{\d_\a}
\newcommand{\ga}{\g_\a}
\newcommand{\tga}{{\tilde{\g}_\a}}
\newcommand{\ag}{{\a_\g}}
\newcommand{\bg}{{\b_\g}}
\newcommand{\dg}{{\d_\g}}
\newcommand{\rg}{{r_\g}}
\newcommand{\trg}{{\tilde{r}_\g}}
\DeclareMathOperator{\Ima}{Im}
\DeclareMathOperator{\re}{Re}
\DeclareMathOperator{\im}{Im}
\DeclareMathOperator{\Span}{Span}
\newcommand{\sbc}{\mathscr{C}}
\numberwithin{equation}{section}
\numberwithin{figure}{section}
\theoremstyle{plain}
\newtheorem{thm}{Theorem}[section]
\newtheorem{proposition}[thm]{Proposition}
\newtheorem{corollary}[thm]{Corollary}
\newtheorem{lemma}[thm]{Lemma}
\theoremstyle{definition}
\newtheorem{definition}[thm]{Definition}
\newtheorem{remark}[thm]{Remark}
\patchcmd{\subsubsection}{-.5em}{.5em}{}{}
\patchcmd{\subsection}{-.5em}{.5em}{}{}
\title[Regularisation for Simultaneous Binary Collisions]{On the $ C^{8/3} $-Regularisation of Simultaneous Binary Collisions in the Planar 4-Body Problem}
\begin{document}
	\author{Nathan Duignan}
	\address[Nathan Duignan]{Department of Applied Mathematics, University of Colorado, Boulder, CO, 80309-0526 USA}
	
	\author{Holger R.~Dullin}
	\address[Holger R.~Dullin]{School of Mathematics and Statistics, University of Sydney, Camperdown, 2006 NSW, Australia}
	\date{}
	\begin{abstract}
      The dynamics of the 4-body problem allows for two binary collisions to occur simultaneously. It is known that in the \emph{collinear} 4-body problem this simultaneous binary collision (SBC) can be block-regularised, but that the resulting block map is only $C^{8/3}$ differentiable. In this paper, it is proved that the $C^{8/3}$ differentiability persists for the SBC in the \emph{planar} 4-body problem. The proof uses several geometric tools, namely, blow-up, normal forms, dynamics near normally hyperbolic manifolds of equilibrium points, and Dulac maps.
	\end{abstract}
	\maketitle

\section{Introduction}

The $N$-body problem has always driven progress in dynamical systems. This is
spectacularly evidenced by Poincar\'e's discovery of chaos in the 3-body problem,
see, e.g., Chenciner's recent review \cite{Chenciner15}. The extraordinary richness of 
the dynamics of the 3-body problem, as proved by Moeckel and Montgomery 
\cite{MoeckelMontgomery15}, can be traced in part to the 
dynamics near the triple collision. While the binary collision
has been known classically \cite{LeviCivita20} to be regularisable, 
McGehee showed  \cite{McGehee1974} that
the triple collision cannot be regularised. 
Consequently, it serves as a building block of complicated dynamics.
A new type of collision appears in the 4-body problem, where 
two binary collisions can occur in separate locations in space,
but at the same time. The regularisability of this simultaneous 
(or double) binary collision (SBC) in the planar 4-body problem
is the subject of this paper. It has been conjectured by Sim\'o and Mart\'inez \cite{Martinez1999}
that the SBC can be regularised, but, only in such a way that the regularisation
is finitely smooth, precisely $C^{8/3}$.

Initial investigations by Sim\'o and Lacomba \cite{Simo1992}, as well as Belbruno \cite{belbrunoSimultaneousDoubleCollision1984},
revealed that the SBC is topologically regularisable.
%
In \cite{Elbialy1993planar,Simo1992,Martinez1999} the SBC was proved to be 
$C^1$ regularisable. 
Based on numerical evidence Sim\'o and Mart\'inez 
\cite{Martinez1999}
conjectured the regularisation was only $C^{8/3}$
and, in a paper by the same authors \cite{Martinez2000}, they  
proved that this $C^{8/3}$ differentiability holds in the \emph{collinear} 4-body problem. 
More precisely, the finite differentiability means that an isolating block, as introduced by Conley and Easton \cite{Conley1971,Easton1971}, 
can be constructed around the SBC and the consequent block map can be extended to a map 
that is at most, and at least, $C^{8/3}$ differentiable at orbits going to collision. For further context on the problem and details on relevant theory see \cite{duignanRegularisationSimultaneousBinary2019a}.

The primary achievement of this paper is to extend the $C^{8/3}$ result to the \emph{planar} 4-body problem
in Theorem~\ref{thm:C2Regularisation}. Whilst it is immediate from the collinear result that collinear collision orbits in the
planar problem will exhibit this finite differentiabilty, we show that the block map at non-collinear collision orbits 
 is still at least $C^{8/3}$.

Before describing in more detail the strategy of the proof we would like 
to explain the essence of the mechanism in a toy model first
introduced in \cite{duignanRegularisationPlanarVector2019}.
How can non-smoothness arise in a smooth (or even polynomial) planar dynamical system?
The answer to this question as described in \cite{duignanRegularisationPlanarVector2019}
is: Through a block map past a degenerate equilibrium point. 
A degenerate equilibrium point can, e.g., be such that the negative $x$-axis is the stable
manifold and the positive $x$-axis is the unstable manifold. 
In this case, a Poincar\'e map between transversal sections to the $x$-axis
can be defined. In \cite{duignanRegularisationPlanarVector2019} it was shown 
that this map generically cannot be extended to $y=0$ in a smooth way.
The reason is that orbits passing the equilibrium with $y > 0$ may 
be sufficiently different from those passing the equilibrium with $y < 0$.
This difference can be precisely characterised after replacing the degenerate point with a copy of $S^1$; a so called \emph{blow-up}.
In the collinear problem the fixed point is replaced
with a manifold of fixed points, which are in turn replaced by copies of $S^1$ \cite{duignanC83regularisationSimultaneous2020}.
The story is similar in the planar problem, however, the dimensionality of the problem requires the
blow up to replace points by a 3-dimensional manifold  instead of $S^1$.

A precursor of this work is \cite{duignanC83regularisationSimultaneous2020}, where the SBC in the 
collinear 4-body problem is treated. A key difference to the first proof given in \cite{Martinez2000} is that
the proof of \cite{duignanC83regularisationSimultaneous2020} is more geometric and allows for a generalisation to the planar problem. The idea
behind this previous work and the proof for the planar problem is the same.
After regularising the simple binary collisions, scaling time, and 
using the (approximately constant) energy of the binaries as a coordinate, the consequent vector field of the $4$-body problem
has a manifold of degenerate equilibria corresponding to SBC for which the linear
part of the vector field vanishes. A normal form procedure and a blow-up allows the construction of
an asymptotic expansion of the block map.
However, due to the higher codimension of the manifold of equilibria in the planar compared to the collinear problem, some nontrivial extensions of the 
normal form theory, blow-up procedure, and asymptotics of the transition map have been made.

The paper is structured as follows. In Section~\ref{sec:coordinates} we introduce \textit{generalised Levi-Civita} coordinates which include the 
energy of the binaries and two complex coordinates $\zeta_1, \zeta_2$ whose imaginary part is the
(approximately constant) angular momentum of the binaries. 
This generalises work of Elbialy \cite{Elbialy1993collinear} on the collinear problem. 

To resolve the dynamics near the degenerate equilibria $\sbc$ (the set of collision points) 
a blow-up is performed in Section~\ref{sec:C0regularity}, 
which augments $\sbc$ to obtain a smooth collision manifold $ \mathcal{C} $.
In prior proofs of the $ C^0 $-regularity, McGehee type coordinates were used to blow-up the singularity \cite{Elbialy1993planar,Martinez1999}. 
Essentially, in these coordinates  $ \sbc $ is augmented with copies of $ S^3 $ to produce the collision manifold. We will instead use a blow-up procedure which augments $ \sbc $ with copies of $ \RP^3 $. This will be shown to have several benefits; the regularisation of isolated binary collisions, a relatively simple argument proving the union of the collision and ejection orbits forms a smooth manifold, and that the flow on the collision manifold is integrable. Ultimately, this blow-up procedure shows that the collision manifold $ \mathcal{C} $ is foliated by the homoclinic connections of a manifold of normally hyperbolic saddle singularities. A new proof of the $ C^0 $-regularity follows.

To simplify the analysis in Section~\ref{sec:Ckregularisation}, a nonlinear formal normal form is computed
at an arbitrary degenerate equilibrium point. Since the equilibrium points are degenerate,
a non-standard type of normal form due to \cite{stolovitchProgressNormalForm2009}
is used. The homological operator is not an automorphism in this case
as the leading order terms are quadratic instead of linear. Moreover, we use a restriction of the 
quadratic part to the collinear problem to construct the homological operator which ultimately produces approximate integrals near SBC.
The final step is the computation of the block map through the composition of three simpler
maps obtained in the blown up system. The computation of these so called Dulac maps is based
on work of \cite{roussarieBifurcationPlanarVector1998,dumortierSmoothNormalLinearization2010,mourtada1990cyclicite} and its recent extension 
from fixed points in the plane to manifolds of fixed points \cite{duignanNormalFormsManifolds} in $\R^n$.

The  road-map just described provides insight into why the 
smoothness is a peculiar $8/3$. The blow up generates hyperbolic 
equilibria that have resonant eigenvalues 1 and 3. The corresponding Dulac
maps have components whose leading order consists of powers $1/3$ and $3$.
If the transition map describing the dynamics along the homoclinic connection 
of the hyperbolic point would be the identity map these powers would cancel and 
the composed map would be $C^\infty$ smooth. Instead, the first terms in the normal form that cannot be removed appear at order 9, and this results in a term of order 8 in the transition map.
The composition of these maps hence gives a map that is only $C^{8/3}$.
The appearance of the resonant terms in normal form can also be interpreted as
the absence of an invariant foliation of a neighbourhood of the manifold of equilibria.



	\section{Coordinates Near Simultaneous Binary Collision}\label{sec:coordinates}
	
	\subsection{The planar 4-body problem}
		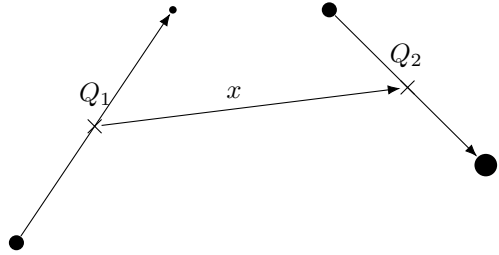
\begin{figure}[ht]
			\centering
			\begin{scaletikzpicturetowidth}{\textwidth}
				\begin{tikzpicture}[scale = \tikzscale]
				\node[circle,fill,inner sep=2pt] (1) at (-2,-0.5) {};
				\node[circle,fill,inner sep=1pt] (2) at (-1,1) {};
				\node[circle,fill,inner sep=2pt] (3) at (0,1) {};
				\node[circle,fill,inner sep=3pt] (4) at (1,0) {};
				\node[cross] (5) at (-1.5,0.25) {};
				\node[cross] (6) at (0.5,0.5) {}; 
				\node[inner sep=2pt,fill=white] (7) at (-4,0.5) {};
				\node[inner sep=2pt,fill=white] (8) at (3,0.5) {};
				
				\draw[-Latex] (1) -- node[label=north:$Q_1$] {}(2);
				\draw[-Latex] (3) -- node[label=north:$Q_2$] {}(4);
				\draw[-Latex] (5) -- node[auto] {$x$}(6);
				\end{tikzpicture}
			\end{scaletikzpicturetowidth}
			\caption{The configuration variables near simultaneous binary
        collision}\label{fig:4BPDiff}
		\end{figure}
		
		Suppose there are four bodies in the plane consisting of two binaries
    undergoing collision in different regions of configuration space at
    precisely the same time $ t_c $. Further, suppose that the bodies with mass $
    m_1 $ and $ m_2 $ undergo one of the binary collisions and bodies with
    masses $ m_3 $ and $ m_4 $ undergo the other. We will call these two
    binaries the \textit{distressed binaries}. Let the difference vector between
    the bodies in each binary be given by $ Q_1,Q_2 \in \C $ respectively and
    let $ x\in\C $ be the difference vector between the two centre of masses of
    the binaries. The coordinates are depicted in Figure \ref{fig:4BPDiff}. If $
    P_1,P_2,y\in\C^* $ are the conjugate momenta of $ Q_1,Q_2,x $, the dynamics
    is given by the Hamiltonian,
		\begin{equation}
			\begin{aligned}
				H(Q,x,P,y) = \sum_{j=1}^{2}\left( \frac{1}{2 M_j} |P_j|^2 - k_j
          |Q_j|^{-1} \right) + \frac{1}{2}\mu |y|^2 - \hat{K}(Q_1,Q_2,x),
			\end{aligned}		
		\end{equation}
		with symplectic form $ \omega = d\Theta $, and $ \Theta $ is the
    tautological one-form on $ T^*\C^3 $, $ \Theta = \re\left(\bar{P}_1 dQ_1 +
      \bar{P}_2 dQ_2 + \bar{y}dx \right) $. The function $ \hat{K} $ contains the potential
    terms coupling the two binaries and is smooth in a neighbourhood of
    simultaneous binary collision which occurs at $Q_1=Q_2=0$. Each of $
    M_j,k_j,\mu,d_j, c_j > 0 $ are constant functions of the masses given
    explicitly:
		\begin{equation}\label{eqn:massconsts}
			\begin{aligned}
        \hat{K} &= \frac{d_1}{|x + c_2 Q_1 - c_4 Q_2|} + \frac{d_2}{|x + c_2 Q_1 + c_3 Q_2|} + \frac{d_3}{|x - c_1 Q_1 - c_4 Q_2|} + \frac{d_4}{|x - c_1 Q_1 + c_3 Q_2|} ,\\
				&~ \\
				M_1 &= \frac{m_1 m_2}{m_1 + m_2},\qquad M_2 = \frac{m_3 m_4}{m_3 + m_4},\qquad k_1 = m_1 m_2,\qquad k_2 = m_3 m_4, \\
				\mu &= \frac{m_1 + m_2 + m_3 + m_4}{(m_1 + m_2)(m_3 + m_4)},\\
				d_1 &= m_1 m_3,\quad  d_2 = m_1 m_4,\quad d_3 = m_2 m_3,\quad d_4 = m_2 m_4, \\
				c_1 &= m_2^{-1} M_1,\quad c_2 = m_1^{-1} M_1,\quad c_3 = m_4^{-1}
        M_2,\quad c_4 = m_3^{-1} M_2.
			\end{aligned}
		\end{equation}
		This choice of coordinates achieves a reduction of the system by translational
    symmetry. The coordinates have been chosen so that the mass metric is diagonal.
	
		It is more convenient to work with rescaled variables $ \tilde{Q}_j, \tilde{P}_j $ via a symplectic transformation
		\begin{equation}
			\tilde{Q}_j = 4 k_j M_j Q_j,\qquad \tilde{P}_j = (4 k_j M_j)^{-1} P_j.
		\end{equation}
		The rescaled Hamiltonian is then
		\begin{equation}
			H(\tilde{Q},x,\tilde{P},y) = \sum_{j=1}^{2} \frac{1}{2} a_j \left( |\tilde{P}_j|^2 - \frac{1}{2} |\tilde{Q}_j|^{-1} \right) + \frac{1}{2}\mu |y|^2 - \tilde{K}(\tilde{Q},x),
		\end{equation}
		where $\displaystyle a_j = 16 k_j^2 M_j $ and $ \tilde{K}(\tilde{Q}_j,x) = \hat{K}\left((4 k_j M_j)^{-1} \tilde{Q}_j,x\right) $.
	
	The system has rotational symmetry and hence the total angular momentum $\im(\bar{Q}_1 P_1) + \im( \bar{Q}_2 P_1 ) + \im(\bar{x} y)$ is conserved.
	Moreover, the usual scaling symmetry of the 4-body problem in these variables reads
	\begin{equation} \label{eqn:scal}
	    Q_i \to s Q_i, \quad x \to s x, \quad P_i \to s^{-1/2} P_i, \quad y \to s^{-1/2} y, \quad H \to H/s \,.
	\end{equation}

	\subsection{Levi-Civita Regularisation of Binary Collisions}\label{sec:LeviCivita}
	
		The next stage of transformations regularise the binary collision of each distressed binary. This is done by passing to the \textit{Levi-Civita variables} via the symplectic map 
		\[ \tilde{Q}_j = \frac{1}{2}\tilde{z}_j^2,\qquad \tilde{P}_j = \bar{\tilde{z}}_j^{-1} u_j \]
		resulting in the partially regularised, translation reduced Hamiltonian,
		
		\begin{equation}\label{eqn:PartiallyRegularisedHam}
			H(\tilde{z},x,u,y) = \sum_{j=1}^{2} \frac{1}{2} a_j |\tilde{z}_j|^{-2}\left(  |u_j|^2 - 1 \right) 
			+ \frac{1}{2}\mu |y|^2 - \tilde{\tilde{K}}(\tilde{z}_1,\tilde{z}_2,x),
		\end{equation}
		with $ \tilde{\tilde{K}}(\tilde{z}_1,\tilde{z}_2,x):= \tilde{K}(\frac{1}{2} \tilde{z}_1^2,\frac{1}{2}\tilde{z}_2^2,x) $.
		
		Time can be rescaled to $ dt = |\tilde{z}_1|^2 |\tilde{z}_2|^2 d\tau $ by using the Poincar\'{e} trick of moving to extended phase space and restricting to a constant energy surface. That is, by constructing the new Hamiltonian 
		\begin{equation} \label{eqn:HamPoincare}
			\begin{aligned}
				\mathscr{H}(\tilde{z},x,u,y) &= |\tilde{z}_1|^2 |\tilde{z}_2|^2\left(H(\tilde{z},x,u,y) - h\right)\\
					&=  \frac{1}{2} a_1 |\tilde{z}_2|^2\left(  |u_1|^2 -  1\right) + \frac{1}{2} a_2 |\tilde{z}_1|^2\left( | u_2|^2 -  1\right) 
					+ |\tilde{z}_1|^2 |\tilde{z}_2|^2\left(\frac{1}{2}\mu |y|^2 - \tilde{\tilde{K}}(\tilde{z},x) - h\right),
			\end{aligned}
		\end{equation}
		and restricting to a constant energy surface in the original Hamiltonian, $ H = h $, yielding $ \mathscr{H} = 0 $. The flow on $ \mathscr{H} = 0 $ is equivalent to the flow on $ H = h $ up to time rescaling. The Hamiltonian $ \mathscr{H} $ is regular at $ \tilde{z}_1 = 0 $ or $ \tilde{z}_2 = 0 $ and so the binary collision singularities have been regularised. However, the set of simultaneous binary collisions $ \tilde{z}_1 = \tilde{z}_2 = 0 $, denoted by $ \sbc $, is a critical point of $ \mathscr{H} $ and hence the associated Hamiltonian vector field has fixed points at $ \sbc $. Essentially, in rescaling time to regularise the binary collisions, the vector field was over scaled near $ \sbc $, slowing down orbits as they approach the singularity and creating a manifold of equilibria. In the new time $ \tau $, collision (resp. ejection) orbits approach $ \sbc $ as $ \tau\to\infty $ (resp. $ \tau\to -\infty $).

		The Hamiltonian \eqref{eqn:HamPoincare} is canonical and regular at collision. 
		To distinguish orbits in the collision limit we need to introduce the intrinsic energies of the distressed binaries as non-canonical coordinates. This is done in the next section. As a result the further analysis will be done directly on the level of the vector field instead of on the level of the Hamiltonian. Before we do so, let us recall a classical result about the collision limit which is best stated in the canonical coordinates of this section.
		
		Let $ \hat{\C} $ be the set of unit complex numbers and denote by $ \Gamma_j = \exp(i \arg(u_j)) \in \hat{\C} $ the angular component of $ u_j $. The following proposition gives the asymptotic behaviour of collision and ejection orbits. Analogous propositions have been proved in many works, for example, \cite{ElBialy1990,Martinez1999}. 	

		\begin{proposition}\label{prop:asymptotics}
			Suppose that $ (\tilde{z}_1(\tau),\tilde{z}_2(\tau),x(\tau),u_1(\tau),u_2(\tau),y(\tau)) $ is a collision (resp. ejection) orbit. Let $ x^*,y^*\in\C $, $ \Gamma_1^*,
			\Gamma_2^* \in \hat{\C} $ and denote by $ |\tilde{z}|^2 = |\tilde{z}_1|^2 + |\tilde{z}_2|^2 $. Then, 
			\[ u_j \to \Gamma_j^*, \quad \frac{\tilde{z}_j}{|\tilde{z}|} \to a_j^{1/3} \Gamma_j^*, \quad x \to x^*, \quad y \to y^*, \]
			as $ \tau \to \infty $ (resp. $ \tau \to -\infty $).  Moreover, the set of orbits asymptotic to any choice of $ x^*,y^*, \Gamma_1^*,\Gamma_2^*  $ forms a real 3 dimensional manifold.
		\end{proposition}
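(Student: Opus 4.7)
The plan is to derive Hamilton's equations from $\mathscr{H}$ in \eqref{eqn:HamPoincare} and then blow up the set $\sbc$ of degenerate equilibria to reduce the asymptotic statement to a stable-manifold statement for a normally hyperbolic equilibrium. The $(x,y)$-dependence of $\mathscr{H}$ enters only through the factor $|\tilde{z}_1|^2 |\tilde{z}_2|^2\bigl(\tfrac{1}{2}\mu|y|^2 - \tilde{\tilde{K}} - h\bigr)$, so $\dot x$ and $\dot y$ are of order $|\tilde z_1|^2|\tilde z_2|^2$; hence convergence $x\to x^*$, $y\to y^*$ will follow from integrability of this quantity along a collision orbit, which in turn is a consequence of the exponential approach to $\sbc$ delivered by the stable-manifold picture below. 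In the same way, the energy relation $\mathscr{H}=0$ will force $|u_j|^2\to 1$ as soon as the ratio $|\tilde z_j|/|\tilde z_{3-j}|$ is controlled, and the phases $\Gamma_j$ will be seen to converge once the blown-up flow is understood.

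To this end I would introduce polar-type coordinates adapted to the leading quadratic form of $\mathscr{H}$: set $\rho = |\tilde z|$ and $\eta_j = \tilde z_j/\rho$, so $(\eta_1,\eta_2)\in\C^2$ lies on the unit sphere, and then perform a further Sundman-type time rescaling $d\sigma = \rho\,d\tau$. The resulting vector field extends smoothly to the invariant hypersurface $\{\rho=0\}$, which is a chart on the collision manifold obtained by blowing up $\sbc$. On $\rho=0$ the right-hand sides for $x$ and $y$ vanish identically, the constraint $\mathscr{H}=0$ forces $|u_j|=1$, and the reduced flow decouples at leading order into two copies of the Kepler collision problem in Levi-Civita time, one per distressed binary. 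A direct balance of the two contributions $a_j|\tilde z_{3-j}|^2 u_j$ to $\dot{\tilde z}_j$ isolates the equilibrium configurations of the blown-up field: they are the points at which the ratio $|\eta_1|:|\eta_2|$ equals $a_1^{1/3}:a_2^{1/3}$ with $u_j$ aligned with $\eta_j$. This identification yields the stated limits $\tilde z_j/|\tilde z|\to a_j^{1/3}\Gamma_j^*$ (up to the global normalisation absorbed in the statement) as well as $u_j\to\Gamma_j^*$.

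Linearising the blown-up vector field at such an equilibrium, I expect normal hyperbolicity transverse to the equilibrium manifold parametrised by $(x,y,\Gamma_1,\Gamma_2)$, with three stable and three unstable real eigenvalues; these will turn out to be the resonant eigenvalues $\pm 1,\pm 3$ flagged in the introduction. The local stable-manifold theorem, applied fibre-wise over the equilibrium manifold, then produces for each choice of $(x^*,y^*,\Gamma_1^*,\Gamma_2^*)$ a smooth 3-dimensional local stable manifold; pulled back through the blow-down, this is exactly the manifold of collision orbits with the prescribed limits, and the ejection case follows by reversing time. The main obstacle is to choose the blow-up weights and the number of successive time rescalings so that the rescaled vector field is genuinely smooth and non-vanishing at $\rho=0$ with the correct transverse spectrum; this is where the scalings $a_j^{1/3}$ and the single Sundman factor $\rho$ must be matched precisely, so that the two Kepler pieces interact through the common variable $\rho$ and produce an isolated (normally hyperbolic) equilibrium rather than a higher-dimensional invariant set. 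Once this coordinate set-up is in place, the asymptotic limits and the dimension count follow from standard invariant-manifold arguments.
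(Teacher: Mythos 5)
Your strategy---blow up $\sbc$ after Levi-Civita regularisation, identify the limit configuration as an equilibrium of the desingularised field, and invoke invariant-manifold theory---is the same in spirit as the paper's, which proves this proposition by deferring entirely to the $\RP^3$ blow-up analysis of Proposition~\ref{prop:structureOfCollisionManifold}. Your identification of the equilibria (ratio $|\eta_1|:|\eta_2|=a_1^{1/3}:a_2^{1/3}$ with $u_j$ aligned with $\eta_j$) is correct. But two steps do not go through as written. The smaller one: the claim that on $\rho=0$ ``the constraint $\mathscr{H}=0$ forces $|u_j|=1$'' is false. Dividing \eqref{eqn:HamPoincare} by $\rho^2$ and letting $\rho\to 0$ yields only the single relation $a_1|\eta_2|^2(|u_1|^2-1)+a_2|\eta_1|^2(|u_2|^2-1)=0$, not two. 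The individual limits $|u_j|\to 1$ require the approximate conservation of the intrinsic binary energies $\tilde h_j=\tfrac12 a_j|\tilde z_j|^{-2}(|u_j|^2-1)$ along a collision orbit; this is the input your sketch omits and which the paper builds directly into its coordinates.

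The more serious gap is the linear picture at the equilibrium, which is exactly where the real work lies. Because you keep $u_j$ rather than the intrinsic energies as coordinates, your equilibrium set is only $6$-dimensional, parametrised by $(\Gamma_1^*,\Gamma_2^*,x^*,y^*)$: collision orbits with the same phases but \emph{different} limiting intrinsic energies all converge to the \emph{same} point. Among the six normal directions sit $v_j:=|u_j|^2-1$, and from \eqref{eqn:HamPoincare} one finds $\dot v_1\propto \re(\bar u_1\tilde z_1)\,v_2$ and $\dot v_2\propto \re(\bar u_2\tilde z_2)\,v_1$, an off-diagonal block with real eigenvalues of opposite sign (magnitude $2$ in units of the radial rate), the unstable one corresponding to leaving the level set $\mathscr{H}=0$. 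On the energy surface the normal spectrum in the collision direction is therefore $\{-1,-2,+1,+1,+3\}$, not three of each sign and not $\{\pm1,\pm3\}$, and the stable fibre of a single equilibrium is only $2$-dimensional per energy level; the $3$-dimensional manifold in the statement appears only after unioning over the total energy $h$, or, as the paper does, after promoting $h_1,h_2$ to coordinates so that they become centre directions along the $8$-dimensional manifold $\mathcal{N}$, each of whose points carries a clean $1$-dimensional stable fibre with normal spectrum $\diag(1,-1,-3,-1)$. (The extra $-2$ eigenvalue is moreover resonant with $-1$, so even smooth dependence of the fibres needs care.) This degeneracy is precisely the paper's stated reason in Section~\ref{sec:coordinates} for introducing the generalised Levi-Civita coordinates before blowing up; without that step, or an explicit fibration over the energy, your appeal to ``standard invariant-manifold arguments'' does not close the dimension count.
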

		A geometrical proof can be constructed using the methods of blow-up and desingularisation. In fact, it follows immediately from Proposition \ref{prop:structureOfCollisionManifold} in Section \ref{sec:C0regularity}. 
				
	\subsection{Generalised Levi-Civita Coordinates}
  It has been argued in \cite{duignanC83regularisationSimultaneous2020} that the Levi-Civita coordinates
  are not ideal for analysis of orbits local to a simultaneous binary collision
  in the collinear 4-body problem. The primary issue arises from the
  dimensionality of the asymptotic orbits given in Proposition
  \ref{prop:asymptotics}. In order for $ \sbc $ to be $ C^0 $-regularisable,
  there must exist a map $ \pib $ taking each collision orbit to a unique
  ejection orbit. However, from Proposition \ref{prop:asymptotics}, each point in $
  \sbc $ has a 3 dimensional manifold of asymptotic orbits. Consequently,
  uniqueness can not be upheld. In \cite{duignanC83regularisationSimultaneous2020}, the so called
  \textit{generalised Levi-Civita coordinates} were introduced to resolve this
  problem. In that work, it became clear that taking approximate integrals as
  coordinates was advantageous to understanding the dynamics near
  collision. In particular, for the collinear problem, the intrinsic energy of
  each binary was taken as a coordinate. This can be done analogously for the
  planar problem. Additionally, the intrinsic angular momentum of each
  binary $ \frac{1}{2}\im(\bar{u}_j \tilde{z}_j)$ is an approximate integral,
  thus a good choice of coordinate. 
		
  Introduce the intrinsic energy of each distressed binary 
  \begin{equation}\label{eq:intrinsicEnergies}
    \tilde{h}_j = \frac{1}{2} a_j |\tilde{z}_j|^{-2}(|u_j|^2 - 1),
  \end{equation}
  and the variables $\tilde{\zeta_j} = \bar{u}_j \tilde{z}_j $. Consider the rescaling
  \begin{equation}
    \zeta_j =  a_j^{-1/3} \tilde{\zeta}_j,\qquad h_j = 2 a_j^{-1/3}\tilde{h}_j.
  \end{equation} 
  Then, the \textit{generalised Levi-Civita coordinates} are given
  by $ (\zeta_1,\zeta_2,h_1,h_2,\Gamma_1,\Gamma_2,x,y) \in
  \C^2\times\R^2\times\hat{\C}^2\times \C^2 $. 
  Notice that these are not canonical coordinates.
		
	The Hamiltonian in the generalised Levi-Civita coordinates is,
		\begin{equation}
			H = \frac{1}{2} a_1^{1/3} h_1 + \frac{1}{2} a_2^{1/3} h_2 + \frac{1}{2}\mu |y|^2 - K(\zeta_1,\zeta_2,h_1,h_2,\Gamma_1,\Gamma_2,x),
		\end{equation}
		where $$ K(\zeta_1,\zeta_2,h_1,h_2,\Gamma_1,\Gamma_2,x) :=
    \tilde{\tilde{K}}\left(a_1^{1/3}U_1(\zeta_1,h_1)^{-1}\Gamma_1\zeta_1,\,
      a_2^{1/3}U_2(\zeta_2,h_2)^{-1}\Gamma_2\zeta_2,\, x\right), $$
    and $U_j(\zeta_j,h_j)$ is the unique solution to
    \begin{equation}
      \label{eq:Udefinition}
      h_j = |\zeta_j|^{-2}U_j^2(U_j^2-1),
    \end{equation}
    satisfying $U_j(0,h) = 1$. Note that $U_j = |u_j|$ is the magnitude of the
    canonical momenta so that $u_j = U_j \Gamma_j$. 
    The condition $U_j(0,h) = 1$ is nothing more than
    ensuring $|u_j|\to 1$ as $\tau\to \infty$ for a collision orbit as per
    Proposition \ref{prop:asymptotics}. 
    Defining $U_j$  through \eqref{eq:Udefinition} is only invertible when $ U_j > 1/\sqrt{2} $. We
    restrict to a sufficiently small neighbourhood of the simultaneous binary
    collision $ \zeta_1 = \zeta_2 = 0 $ so this inequality holds.
		
	The pullback of the symplectic form under the transformation to the intrinsic energies is
		\begin{equation}
      \begin{aligned}
        \omega = \re(d\bar y\wedge dx) 
        &+ \sum_{j=1}^2  \frac{-a_j^{1/3}}{2U_j^2(2U_j^2-1)}\left( |\zeta_j|^2\re\left(d\zeta_j\wedge dh_j \right) + i h_j\im\left( \bar\zeta_j \right)d\zeta_j\wedge d\bar\zeta_j\right) \\ 
        &\qquad + \frac{1}{2} a_j^{1/3}\bar\Gamma_j(d\zeta_j-d\bar\zeta_j)\wedge d\Gamma_j  
      \end{aligned}
    \end{equation}
		The associated Poisson bracket is found as
		\begin{equation}
      \begin{aligned}
        \Pi = 4\re\left(\partial_{x} \wedge \partial_{\bar{y}}\right) &+ \sum 4a_j^{-1/3}\frac{U_j^2(2U_j^2-1)}{|\zeta_j|^2}\re\left(
             \partial_{\zeta_j}\wedge\partial_{h_j} \right) \\
           &\qquad +
             2ia_j^{-1/3}\Gamma_j\left( \frac{ h_j }{|\zeta_j|^2} \im\left( \bar\zeta_j
             \right) \partial_{h_j}\wedge\partial_{\Gamma_j} +  \im\left(
               \partial_{\bar\zeta_j} \right)\wedge \partial_{\Gamma_j} \right)
         \end{aligned}
		\end{equation}
		
	Hamilton's equations $ X_H = \Pi(\cdot,H) $ gives the Hamiltonian vector
    field $ X_H $ for the planar 4-body problem in the generalised Levi-Civita
    coordinates. Explicitly, $X_H$ is given by 
		\begin{equation}\label{eqn:generalisedLeviCivita}
			\begin{aligned}[c]
        \dot\zeta_j &= \frac{U_j^2}{|\zeta_j|^2} + 2h_j + 2i a_j^{-1/3}\im\left( \zeta_j\partial_{\zeta_j} K \right) \\
        \dot h_j &= \frac{4 a_j^{-1/3} U_j^4}{|\zeta_j|^2}\re\left( \partial_{\zeta_j}K \right) \\
        \dot \Gamma_j &= \frac{i}{|\zeta_j|^2}\Gamma_j \im\left( h_j\zeta_j - 2a_j^{-1/3}|\zeta_j|^2 \partial_{\zeta_j}K \right) \\
        \dot x &= \mu y \\
        \dot y &= 2  \partial_{\bar x} K
			\end{aligned}	
		\end{equation}
		By scaling $ X_H $ through a space dependent time rescaling $ dt =
    |\zeta_1|^2 |\zeta_2|^2 d\tau $, the single binary collisions at $\zeta_j =
    0$ are regularised. Denote by $ X = |\zeta_1|^2|\zeta_2|^2 X_H $ the
    rescaled vector field and let ${^\prime}$ denote a derivative with respect
    to the fictitious time $\tau$. The rescaled vector field $X$ is regular 
    when $\zeta_j \to 0$. However, time is ``over-scaled'', which means that the whole 
    vector field $X$ vanishes in the collision limit. What is worse, the family 
    of equilibrium points with $\zeta_1 = \zeta_2 = 0$ has vanishing linear part.
    Thus, in the next section, blow-up is used to study the collision limit.

    The following technical lemma is crucial in obtaining the form of $X_H$ given
    in \eqref{eqn:generalisedLeviCivita}.
    \begin{lemma}
      The following relations hold.
      \begin{equation}
        \label{eq:partialEquivalence}
        \begin{aligned}
          \re\left( \zeta_j\partial_{\zeta_j}K \right) &= 0 \\
          \Gamma_j\partial_{z_j} K &= U_j \partial_{\zeta_j} K \\
          \Gamma_j\partial_{\Gamma_j}K &= 2 \zeta_j \partial_{\zeta_j}K
        \end{aligned}
      \end{equation}
    \end{lemma}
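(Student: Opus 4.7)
My plan is to verify each of the three identities by a direct Wirtinger chain-rule computation, exploiting the quadratic Levi-Civita structure and a $U(1)$ symmetry of $K$.

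First, I would implicitly differentiate \eqref{eq:Udefinition} to obtain
\[ \partial_{|\zeta_j|^2}U_j = \frac{h_j}{2U_j(2U_j^2-1)}, \qquad \partial_{h_j}U_j = \frac{|\zeta_j|^2}{2U_j(2U_j^2-1)}, \]
and introduce the bookkeeping coefficient $\alpha := (U_j^2-1)/[2(2U_j^2-1)]$, which after using \eqref{eq:Udefinition} equals $|\zeta_j|^2 U_j^{-1}\partial_{|\zeta_j|^2}U_j$; this quantity governs all subsequent coefficient arithmetic. I would then exploit the quadratic structure $\tilde{\tilde K}(\tilde z_1,\tilde z_2,x) = \tilde K(\tilde z_1^2/2, \tilde z_2^2/2, x)$ to obtain $\partial_{\tilde z_j}\tilde{\tilde K} = \tilde z_j\,\partial_{\tilde Q_j}\tilde K$, and apply the Wirtinger chain rule to $\tilde z_j = a_j^{1/3}U_j^{-1}\Gamma_j\zeta_j$ to express $\partial_{\zeta_j}K$, $\partial_{\Gamma_j}K$ and $\partial_{\tilde z_j}K$ in a common explicit form with coefficients built from $\alpha$, $\Gamma_j$, $\zeta_j$, $U_j$.

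The third identity $\Gamma_j\partial_{\Gamma_j}K = 2\zeta_j\partial_{\zeta_j}K$ then emerges by algebraic simplification: the factor $2$ is the Euler exponent of $\Gamma_j\zeta_j$ in the quadratic $\tilde z_j^2 \propto \Gamma_j^2\zeta_j^2$, while the $\alpha$ and $1-\alpha$ contributions from the $U_j$ chain rule recombine and cancel. For the first identity, $\re(\zeta_j\partial_{\zeta_j}K) = 0$, I would invoke the $U(1)$ symmetry $(\zeta_j,\Gamma_j)\mapsto (e^{i\phi}\zeta_j, e^{-i\phi}\Gamma_j)$, which preserves both $|\zeta_j|^2$ and $\Gamma_j\zeta_j$ and hence leaves $\tilde z_j$ (and thus $K$) invariant; the resulting Noether identity $\im(\Gamma_j\partial_{\Gamma_j}K) = \im(\zeta_j\partial_{\zeta_j}K)$, combined with the third identity, forces the real part of $\zeta_j\partial_{\zeta_j}K$ to vanish. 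The second identity $\Gamma_j\partial_{z_j}K = U_j\partial_{\zeta_j}K$ is then the chain rule in the reverse direction, obtained by inverting $\tilde z_j = a_j^{1/3}U_j^{-1}\Gamma_j\zeta_j$ to $\zeta_j = a_j^{-1/3}U_j\bar\Gamma_j\tilde z_j$ and expressing $\partial_{\tilde z_j}\tilde{\tilde K}$ in terms of $\partial_{\zeta_j}K$.

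The main technical obstacle is the careful bookkeeping of Wirtinger derivatives through the implicit dependence $U_j(|\zeta_j|^2,h_j)$: individual terms in $\partial_{\zeta_j}\bar{\tilde z}_j$ carry factors like $\bar\zeta_j/\zeta_j$ that must cancel against the $\bar{\tilde z}_j$-factor in $\partial_{\bar{\tilde z}_j}\tilde{\tilde K}$ using the reality of $\tilde K$ (which pairs $\partial_{\tilde z_j}$ and $\partial_{\bar{\tilde z}_j}$ terms into complex conjugates), and the exact form of $\alpha$ is essential to make this recombination work out. Once this bookkeeping is carried out, the three identities reduce to routine algebra.
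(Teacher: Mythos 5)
Your strategy is close in spirit to the paper's: the $U(1)$ action $(\zeta_j,\Gamma_j)\mapsto(e^{i\phi}\zeta_j,e^{-i\phi}\Gamma_j)$ you invoke is exactly the condition $\partial_{\hat\Gamma_j}K=0$ in the paper's intermediate coordinates $(z_j,\hat h_j,\hat\Gamma_j,x,y)$, and your handling of the second and third relations (chain rule through $\tilde z_j = a_j^{1/3}U_j^{-1}\Gamma_j\zeta_j$ with the implicit derivatives of $U_j$) is the computation the paper alludes to. However, there is a genuine gap in your derivation of the first relation. The rotational symmetry controls only the angular combination of derivatives: for real $K$ and Wirtinger derivatives, differentiating $K(e^{i\phi}\zeta_j,e^{-i\phi}\Gamma_j)$ at $\phi=0$ gives $\im(\Gamma_j\partial_{\Gamma_j}K)=\im(\zeta_j\partial_{\zeta_j}K)$, as you state; but substituting the third relation into this Noether identity yields $2\im(\zeta_j\partial_{\zeta_j}K)=\im(\zeta_j\partial_{\zeta_j}K)$, i.e.\ $\im(\zeta_j\partial_{\zeta_j}K)=0$. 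The real part of $\zeta_j\partial_{\zeta_j}K$ is untouched by this argument, so your claimed conclusion does not follow from the two premises you combine.

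The missing ingredient is the radial counterpart of your symmetry, which is the paper's second condition $\partial_{\hat h_j}K=0$: varying the intrinsic energy at fixed $z_j$ and $\Gamma_j$ changes $U_j$ and therefore rescales $\zeta_j = a_j^{-1/3}U_j\bar\Gamma_j\tilde z_j$ by a positive real factor, so the chain rule along this direction produces a relation involving precisely $\re(\zeta_j\partial_{\zeta_j}K)$ (paired with $\partial_{h_j}K$). The paper obtains the first and third relations jointly from the pair $\partial_{\hat\Gamma_j}K=\partial_{\hat h_j}K=0$ pushed through the coordinate change; using only the angular condition, as you do, cannot constrain the real part. You should either add the $\hat h_j$-direction computation, or verify $\re(\zeta_j\partial_{\zeta_j}K)=0$ directly from your explicit expression for $\partial_{\zeta_j}K$ --- being careful that the terms generated by $\partial_{\zeta_j}U_j$ feed into both the holomorphic and antiholomorphic channels of $\tilde{\tilde K}$, so the real part does not cancel for free and the precise value of your coefficient $\alpha$ matters.
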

    \begin{proof}
      The proof is a computation. One method of attack is to introduce the
      intermediate transformation $(z_j,u_j,x,y) \to (z_j,\hat
      h_j,\hat\Gamma_j,x,y)$. In these coordinates we have that
      $\partial_{\hat\Gamma_j}K = \partial_{\hat h_j}K = 0$. These two conditions in the
      $(\zeta_j,h_j,\Gamma_j,x,y)$ variables gives the first and last relations
      in the lemma. The second relation comes from pulling back the vector
      $\partial_z$ under the coordinate transformation and applying this to $K$. 
    \end{proof}

    \begin{remark}
      All the terms coupling the two distressed binaries from the potential can
      be removed by setting $K=0$. In doing so, the dynamics of two uncoupled
      Kepler problems is recovered. From \eqref{eqn:generalisedLeviCivita}, the
      equations of motion for the uncoupled problem are given by
      \begin{equation}
			  \begin{aligned}[c]
          \dot\zeta_j &= \frac{U_j^2}{|\zeta_j|^2} + 2h_j \\
          \dot h_j &= 0 \\
          \dot \Gamma_j &= \frac{i}{|\zeta_j|^2}\Gamma_j \im\left( h_j\zeta_j \right) \\
          \dot x &= \mu y \\
          \dot y &= 0.
        \end{aligned}	
      \end{equation}
      We recover from this that the (scaled) intrinsic energy $h_j$, angular momentum $
      \im(\zeta_j)$, and momenta $y$ between the two binaries are conserved in this limit.
    \end{remark}

    \begin{remark} \label{rem:cal}
     The collision limit described in Proposition~\ref{prop:asymptotics} expressed in the new variables reads $\zeta_i \to |\tilde z_i|$.
     In particular, $\zeta_1 \to \zeta_2$ in the collision limit and that the $\zeta_i$ are real.
     One way to permanently achieve real $\zeta_i$ is to restrict to the collinear problem in which all imaginary parts vanish. 
     Since $\tilde \zeta_i = 2 \tilde Q_i \overline{{{\tilde{P}}_i}} = Q_i \bar{P_i}$ another possibility to achieve real $\zeta_i$ is to require that $Q_i \perp P_i$.
     This ensures the angular momentum $\im Q_i \bar{P_i}$ vanishes even thought the dynamics is not collinear.
     The latter method occurs in the rectangular 4-body problem.
     A more general invariant sub-problem is the so-called Caledonian problem \cite{StevesRoy98,RoySteves98,Roy00}.
     There $Q_1 = - Q_2$ and $P_1 = -P_2$ and hence $\tilde \zeta_1 = \tilde \zeta_2$. 
     Moreover, the masses are pairwise equal so that the coefficients $a_1 = a_2$ are the same.
    \end{remark}
	
\section{$ C^0 $-regularity of block map}\label{sec:C0regularity}
	
The primary aim of this section is to provide a proof of Theorem
\ref{thm:C0Regularisable} on the $ C^0 $-regularisation of simultaneous binary
collisions. Blow-up and desingularisation on the set of collisions $ \sbc $ is
implemented and a study of the flow on the resultant collision manifold $
\mathcal{C} $ provides the desired proof. In the process, the flow on the
collision manifold is revealed to possess integrals that are related to the
intrinsic angular momentum of each distressed binary, and to the original time 
variable. The blown up collision manifold $\mathcal{C}$ has dimension 11,
and the flow on it is not Hamiltonian. The non-constant dynamics takes place on 
$\RP^3$, and we will show that there are two additional integrals that make
the flow on $\RP^3$ integrable.
		
\subsection{Blow-Up and Desingularisation}\label{sec:CollisionManifold}

The use of blow-up in celestial mechanics was first introduced by McGehee
\cite{McGehee1974} in his study of the triple collision. Later it was
implemented in investigations of the simultaneous binary collision by Elbialy
\cite{ElBialy1990} and Mart\'{i}nez and Sim\'{o} \cite{Martinez1999}. Both of
these papers perform the blow-up by introducing a set of coordinates akin to the
usual McGehee coordinates. Essentially, the McGehee coordinates replace each
simultaneous binary collision with a copy of $ S^3 $. A study of the resulting
flow on each $ S^3 $ provides topological properties of the flow near collision.
However, in both works, the McGehee coordinates are taken before Levi-Civita
regularisation. Consequently, each binary collision is not regularised and one
needs to perform a more trying analysis of the flow on each $ S^3 $ to prove
statements like the $ C^0 $ regularity. Instead of using the McGehee
coordinates, a blow-up procedure after Levi-Civita regularisation will be
implemented. Further, the more algebraic route of blowing up each collision with
a copy of $ \RP^3 $ is followed; see \cite{Ilyashenko2008} for more details. The
advantage of blowing up with $ \RP^3 $ instead of $ S^3 $ is both the avoidance
of trig functions and the relatively simple proof of Corollary
\ref{cor:collisionEjectionIsSmooth} which shows the set of collision and
ejection orbits is a smooth manifold.
		
Before introducing the blow-up for the simultaneous binary collision, it will be
useful to first describe the blow-up procedure for a point in $ \C^2 \cong
\R^4$. Take $ (\zeta_1,\zeta_2)\in\C^2 $, let $ \zeta_1 = \zeta_{11} + i
\zeta_{12}, \zeta_2 = \zeta_{21} + i \zeta_{22} $, and denote $ \pmb{\zeta} :=
(\zeta_{11},\zeta_{12},\zeta_{21},\zeta_{22})\in\R^4$, $ \pmb{\alpha}:=
[\alpha,\beta,\gamma,\delta] \in \RP^3 $. Define the 4 dimensional manifold,
\begin{equation}
  \mathcal{B} :=  \left\{ \left( \pmb{\zeta}, \pmb{\alpha} \right)\in\R^4\times\RP^3 |\ \exists r\in\R\ s.t.\ \pmb{\zeta} = r \pmb{\alpha}  \right\},
\end{equation} 
the \textit{blow-up space} or \textit{blow-up of $ 0 $}.
		
There is a natural projection
\begin{equation}
  \Psi : \mathcal{B} \to \R^4,\quad \left(\pmb{\zeta},\pmb{\alpha}  \right) \to \pmb{\zeta}.
\end{equation}
As $ \Psi $ is differentiable the push forward $ \Psi_*:T\mathcal{B} \to T\R^4 $
induces a vector field $ X_{\mathcal{B}} $ on the blow-up space $ \mathcal{B} $
if we require $ \Psi_*(X_\mathcal{B}) = X $. Note that $ \Psi $ is an
isomorphism outside of $ 0\in\R^4 $ and loses injectivity on the manifold $
\mathcal{C}:= \Psi^{-1}(0) \cong \RP^3 $, often referred to as the
\textit{exceptional divisor}. The primary achievement of the blow-up is to
replace the point at $ 0\in\R^4 $ with the manifold $ \RP^3 $ whilst maintaining
diffeomorphic conjugacy between the flow of $ X $ on $ \R^4\setminus\{0\} $ and
of $ X_{\mathcal{B}} $ on $ \mathcal{B}\setminus\mathcal{C} $.
		
The blow-up method just described can easily be extended to construct a blow-up of the manifold
of simultaneous binary collisions $ \sbc \cong \R^2 \times \hat{\C}^2\times \C^2
=: \mathcal{M} $ with coordinates $(h_1, h_2) \in \R^2$, $(\Gamma_1, \Gamma_2) \in \hat{\C}^2$
and $(x,y) \in \C^2$.
It is done by performing the map $ \Psi $ for each point in $\sbc $ through the map
\begin{equation}
  \Psi\times Id :\mathcal{B}\times\mathcal{M} \to \R^4\times\mathcal{M}.
\end{equation} 
As above, the push forward of $ \Psi\times Id $ gives a vector field $
X_{\mathcal{B}} $ from the vector field in generalised Levi-Civita variables $ X
$. Traditionally in celestial mechanics, the exceptional divisor
\[ \mathcal{C} := (\Psi\times Id)^{-1}(\sbc) \] is named the \textit{collision
  manifold}. The blow-up $ \Psi\times Id $ replaces the set of
collisions $ \sbc \cong \mathcal{M} $ with the higher dimensional collision
manifold $ \mathcal{C}\cong\RP^3\times\mathcal{M} $. Yet, the flow of $
X_{\mathcal{B}} $ off the collision manifold is still conjugate to the flow of $
X $ off the collision set $ \sbc $.
		
The flow on $ \mathcal{C} $ is fictitious in that it does not correspond to
physical orbits of the system. Nevertheless, due to the continuity of the vector
field $ X_{\mathcal{B}} $, orbits near collision are shadowed by orbits on $
\mathcal{C} $. By studying the flow on $ \mathcal{C} $ a topological picture of
a neighbourhood of $ \sbc $ can be formed.

The flow on $ \mathcal{C} $ can be given explicitly by considering charts. Take an
atlas $ \mathcal{A} $ consisting of the 4 charts $
\psi_{\alpha},\psi_{\beta},\psi_{\gamma},\psi_{\delta} $ defined on open sets $
U_\alpha,U_\beta,U_\gamma,U_\delta \subset \mathcal{B} $ which are given by
\begin{equation}
  \begin{aligned}
    \psi_{\alpha}((\zeta_{11},\zeta_{12},\zeta_{21},\zeta_{22}),[\alpha,\beta,\gamma,\delta]) &= \left( \zeta_{11}, \beta/\alpha,\gamma/\alpha,\delta/\alpha \right),\quad U_\alpha = \mathcal{B}\cap\{\alpha\neq 0\} \\
    \psi_{\beta}((\zeta_{11},\zeta_{12},\zeta_{21},\zeta_{22}),[\alpha,\beta,\gamma,\delta]) &= \left( \zeta_{12}, \alpha/\beta,\gamma/\beta,\delta/\beta \right),\quad U_\beta = \mathcal{B}\cap\{\beta\neq 0\} \\
    \psi_{\gamma}((\zeta_{11},\zeta_{12},\zeta_{21},\zeta_{22}),[\alpha,\beta,\gamma,\delta]) &= \left( \zeta_{21}, \alpha/\gamma,\beta/\gamma,\delta/\gamma \right),\quad U_\gamma = \mathcal{B}\cap\{\gamma\neq 0\} \\
    \psi_{\delta}((\zeta_{11},\zeta_{12},\zeta_{21},\zeta_{22}),[\alpha,\beta,\gamma,\delta]) &= \left( \zeta_{22}, \alpha/\delta,\beta/\delta,\gamma/\delta \right),\quad U_\delta = \mathcal{B}\cap\{\delta\neq 0\} \\
  \end{aligned}
\end{equation}
Then, the \textit{$ \zeta_{ij} $-directional blow-ups} $ P_{ij} $ are obtained
by asserting that the following diagram commutes.
\begin{equation}
  \begin{tikzcd}
    (r_{\beta},\alpha_{\beta},\gamma_{\beta},\delta_{\beta})\in\R^4	& \R^4\arrow[l,<-,"P_{12}"]	& (r_{\alpha},\beta_{\alpha},\gamma_{\alpha},\delta_{\alpha})\in\R^4	\\
    \R^4\arrow[d,<-,"P_{21}"]	& \mathcal{B}\arrow[ul,"\psi_{\beta}"]\arrow[u,"\Psi"]\arrow[ur,"\psi_{\alpha}"]\arrow[l,"\Psi"]\arrow[r,"\Psi"]\arrow[dl,"\psi_{\gamma}"]\arrow[d,"\Psi"]\arrow[dr,"\psi_{\delta}"]	& \R^4\arrow[u,<-,"P_{11}"]	\\
    (r_{\gamma},\alpha_{\gamma},\beta_{\gamma},\delta_{\gamma})\in\R^4 &
    \R^4\arrow[r,<-,"P_{22}"] &
    (r_{\delta},\alpha_{\delta},\beta_{\delta},\gamma_{\delta})\in\R^4
  \end{tikzcd}
\end{equation}
Explicitly one computes
\begin{equation}
  \begin{aligned}
    P_{11}(r_\alpha,\beta_{\alpha},\gamma_{\alpha},\delta_{\alpha}) &= r_\alpha(1,\beta_{\alpha},\gamma_{\alpha},\delta_{\alpha})\\
    P_{12}(r_{\beta},\alpha_{\beta},\gamma_{\beta},\delta_{\beta}) &= r_\beta (\alpha_{\beta},1,\gamma_{\beta},\delta_{\beta})\\
    P_{21}(r_{\gamma},\alpha_{\gamma},\beta_{\gamma},\delta_{\gamma}) &= r_\gamma(\alpha_{\gamma},\beta_{\gamma},1,\delta_{\gamma})\\
    P_{22}(r_{\delta},\alpha_{\delta},\beta_{\delta},\gamma_{\delta}) &=
    r_\delta(\alpha_{\delta},\beta_{\delta},\gamma_{\delta},1).
  \end{aligned}
\end{equation}
		
The notation used for coordinates in each chart is designed for quick
computation of inverses and transitions between charts. To see this, identify $
(\zeta_{11},\zeta_{12},\zeta_{21},\zeta_{22}) $ with $
(\alpha,\beta,\gamma,\delta) $ and let $
\xi,\eta,\nu\in\{\alpha,\beta,\gamma,\delta\} $. Then the following formulas
hold,
\begin{equation}
  r_\eta = \eta,\quad \xi_\eta=\xi/\eta,\quad r_\eta \xi_\eta = \xi,\quad \xi_\eta = \xi_\nu /\eta_\nu 
\end{equation}
where $ \xi_\eta = 1 $ if $ \xi = \eta $. For example, $ r_\alpha = \alpha $
which in turn can be identified with $ \zeta_{11} $, that is, $ r_\alpha =
\zeta_{11} $. Similarly, $ \ba =\beta / \alpha = \zeta_{12}/\zeta_{11} $.
		
Each chart $ \psi_\eta \in \mathcal{A} $ is naturally extended to a chart on $
\mathcal{B}\times\mathcal{M} $. Making a distinction between the extension and the
original $ \psi_{\eta} $ will not be needed so the extension is denoted by the
same symbol.
 
The explicit vector fields $ X_\alpha,X_\beta,X_\gamma,X_\delta $ calculated
respectively by the pull-back of $ X $ under $ P_{11*}, P_{12*}, P_{21*},
P_{22*} $ can now be given. In the blow-up procedure one must also desingularise
the vector fields \cite{Ilyashenko2008}. This is done by considering the
rescaled vector fields $ 1/r_\eta X_\eta,\ \eta\in\{\alpha,\beta,\gamma,\delta\}
$, which for convenience will be renamed as $ X_\eta $. We give only the
components of $ X_\eta $ coming from the $ (\zeta_1,\zeta_2) $ components of $ X
$ as the $ (h_1,h_2,\Gamma_1,\Gamma_2,x,y) $ components are computed by mere
replacement of $ (\zeta_1,\zeta_2) $ by the corresponding $ P_{ij} $. Moreover,
only the lowest order terms in $r_\eta$ are given as this is all that is
required for the analysis in this section.

At last, we obtain the blow-up systems \eqref{eqn:twista}~--~\eqref{eqn:twistd}.
We have set $ \Gamma_j = \Gamma_{j1} + i \Gamma_{j2} $. The collision manifold $
\mathcal{C} $ in each chart is given by $ r_\eta = 0 $ for each $
\eta\in\{\alpha,\beta,\gamma,\delta \} $ and is invariant under each of the
flows.
		
		\begin{figure}[!ht]
			\centering
			\begin{subfigure}[b]{0.45\linewidth}
				\begin{equation}\label{eqn:twista}
				\begin{aligned}
					r_\alpha^\prime			&= r_\alpha\left(\gamma_\alpha^2+\delta_\alpha^2\right) + O(r_\alpha^2)\\
					\beta_\alpha^\prime		&=-\beta_\alpha\left(\gamma_\alpha^2+\delta_\alpha^2\right) + O(r_\alpha) \\
					\gamma_{\alpha}^\prime	&= \left(\beta_\alpha^2+1\right)-\gamma_\alpha\left(\gamma_\alpha^2+\delta_\alpha^2\right) + O(r_\alpha) \\
					\delta_{\alpha}^\prime	&= -\delta_\alpha\left(\gamma_\alpha^2+\delta_\alpha^2\right) + O(r_\alpha)
				\end{aligned}
				\end{equation}
			\end{subfigure}
			\qquad
			\begin{subfigure}[b]{0.45\linewidth}
				\begin{equation}\label{eqn:twistb}
				\begin{aligned}
					r_\beta^\prime			&= 0 + O(r_\beta^2) \\
					\alpha_\beta^\prime		&= \left(\gamma_\beta^2+\delta_\beta^2\right) + O(r_\beta) \\
					\gamma_{\beta}^\prime	&= \left(\alpha_\beta^2+1\right) + O(r_\beta) \\
					\delta_{\beta}^\prime	&= 0 + O(r_\beta)
				\end{aligned}
				\end{equation}	
			\end{subfigure}
			~\\[5pt]
			\begin{subfigure}[b]{0.45\linewidth}
				\begin{equation}\label{eqn:twistg}
				\begin{aligned}
					r_\gamma^\prime			&= r_\gamma\left(\alpha_\gamma^2+\beta_\gamma^2\right) + O(r_\gamma^2) \\
					\alpha_\gamma^\prime	&= \left(\delta_\gamma^2+1\right)-|u_2|\alpha_\gamma\left(\alpha_\gamma^2+\beta_\gamma^2\right) +O(r_\gamma)\\
					\beta_{\gamma}^\prime	&= -\beta_\gamma\left(\alpha_\gamma^2+\beta_\gamma^2\right) + O(r_\gamma) \\
					\delta_{\gamma}^\prime	&= -\delta_\gamma\left(\alpha_\gamma^2+\beta_\gamma^2\right) + O(r_\gamma)
				\end{aligned}
				\end{equation}
			\end{subfigure}
			\qquad
			\begin{subfigure}[b]{0.45\linewidth}
				\begin{equation}\label{eqn:twistd}
				\begin{aligned}
					r_\delta^\prime			&= 0 + O(r_\delta^2) \\
					\alpha_\delta^\prime	&= \left(\gamma_\delta^2+1\right) + O(r_\delta) \\
					\beta_{\delta}^\prime	&= 0 + O(r_\delta) \\
					\delta_{\delta}^\prime	&= \left(\alpha_\delta^2+\beta_\delta^2\right) + O(r_\delta)
				\end{aligned}
				\end{equation}	
			\end{subfigure}
		\end{figure}
		
		One must be careful in dealing with the different desingularisations $
    X_\eta $. Even though each $ X_\eta $ is analytic, the different rescaling $
    1/r_\eta $ in each chart prevent them from forming a compatible set of
    vector fields on $ \mathcal{C} $. However, we are not concerned with the
    exact time parameterisation of each orbit, merely the orbit itself. In this
    sense, the various desingularisations form compatible integral curves on $
    \mathcal{C} $. More precisely, instead of considering $ X_\eta $ as inducing
    vector fields on $ \mathcal{C} $, we can think of each $ X_\eta $ as
    inducing line fields on $ \mathcal{C} $. A line field is given by taking a
    line $ l_p $ in the tangent space $ T_p\R^4 $ for each point in a given
    chart. We can take natural line fields $ \bar{X}_\eta $ induced by the
    vector fields $ X_\eta $ through $ \bar{X}_\eta (p) = \Span X_\eta (p) $.
    Each line field creates the analog of integral curves by defining orbits of
    the line field as curves whose tangent at each point is in the line field at
    that point. The desingularised vector fields $ X_\eta $ induce a
    compatible line field on~$\mathcal{C}$.
		 
		Another issue with the desingularisation procedure is the fact that the
    rescaling $ r_\eta^{-1} $ is not strictly positive, forcing the time
    reversal of the vector fields when $ r_\eta^{-1} < 0 $. Hence, in order to
    get a picture of the direction of orbits near $ \mathcal{C} $, it is
    essential to keep track of this time reversal.
		
		
		As an example of when one must be careful, consider the simpler example of 
	the blow-up of $ 0 \in
    \R^2 $ through the map $ \Psi_2 : \mathcal{B}_2 \subset\R^2\times\RP \to
    \R^2 $ defined analogously to the $ \RP^3 $ case already discussed. One
    takes charts by considering the $ x $ and $ y $-directional blow-ups $ (x,y)
    = (r_\alpha,r_\alpha \beta_\alpha) $ and $ (x,y) = (r_\beta \alpha_{\beta},
    r_\beta) $ denoted by $ P_x, P_y $ respectively (in this paragraph $x,y$ are coordinates in the plane unrelated to $(x,y)$ in the planar problem. For more details on this example see \cite{duignanC83regularisationSimultaneous2020}). An orbit near collision
    occurring at $ (x,y) = (0,0) $, the image of the orbit in the two
    directional blow-ups, and some intermediate sections $
    \Sigma_0,\Sigma_1^+,\Sigma_2^+,\Sigma_3 $ are plotted in Figure
    \ref{fig:directionalBlowup}.
		
		The example orbit starts with $ r_\alpha > 0 $ and closely follows the
    collision manifold at $ r_\alpha = 0 $ in forward time, heading toward $
    \beta_{\alpha}= \infty $. The shadowing flow on $ r_\alpha = 0 $ is given
    the correct direction by taking $ X_\alpha $. When one swaps charts from $
    X_\alpha $ to $ X_\beta $ in order to follow the orbit through $
    \beta_{\alpha} = \infty $, the orbit passes from $ \bar{\Sigma}_1^+ $ with $
    \alpha_{\beta} < 0 $ to $ \bar{\Sigma}_2^+ $ with $ \alpha_{\beta} > 0 $.
    Note that $ r_\beta > 0 $, so the compatible flow on $ r_\beta=0 $ is given
    by $ X_\beta $. Finally, when mapped back to the $ \psi_{\alpha} $ chart, $
    r_\alpha $ is now negative and the compatible flow is given by $ -X_\alpha
    $, a time reversed system.
		
		If $ (r_\alpha,\beta_{\alpha}) = (0,0) $ is a hyperbolic saddle, then the
    eigenvalues will swap sign between $ X_\alpha $ and $ -X_\alpha $. In the $
    +X_\alpha $ flow, orbits are pulled toward the collision manifold at $
    r_\alpha = 0 $ and in the $ -X_\alpha $ flow pulled away. This is compatible
    with the true orbit in $ \R^2 $ as it approaches $ 0 $ when $ x > 0 $ $
    (r_\alpha > 0) $ and leaves $ 0 $ when $ x < 0 $ $ (r_\alpha <0) $.
		
		\begin{figure}[ht]		
			\centering \begin{tikzpicture}
\begin{axis}[name = plot1,
scale only axis,
width = 1/4*\textwidth,
height =1/4*\textwidth ,
axis x line=middle,
axis y line=middle,
x label style = {anchor=north},
axis equal,
xlabel = {$x$},
ylabel = {$y$},
restrict y to domain = -4:2,
restrict x to domain = -2:2]
\addplot [domain = 0:0.5, samples = 300]
({sqrt(0.125/x -x^2)}, {x});
\addplot [domain = 0:0.5, samples = 300]
({-sqrt(0.125/x -x^2)}, {x}); 
\addplot [domain = 0:0.5, samples = 300]
(1.5+x^2-x^3, {x}) node[above,pos=1]{$ \Sigma_3 $};
\addplot [domain = 0:0.5, samples = 300]
(-1.5-x^2+x^3, {x}) node[above,pos=1]{$ \Sigma_0 $};
\addplot [domain = 0:1, samples = 300]
({x}, {x}) node[above,pos=1]{$ \Sigma_2 $};
\addplot [domain = 0:-1, samples = 300]
({x}, {-x})node[above,pos=1]{$ \Sigma_1 $};
\end{axis}

\begin{axis}[name = plot2,at={($(plot1.east)+(1.5cm,2cm)$)},anchor=outer west,axis x line=middle,
scale only axis,
width = 1/4*\textwidth,
height =1/4*\textwidth ,
axis y line=middle,
x label style = {anchor = north},
axis equal,
xlabel = {$\alpha_{\beta}$},
ylabel = {$r_\beta$},
restrict y to domain = -2:2,
restrict x to domain = -2:2]
\addplot [domain = 0:0.5, samples = 300]
({sqrt(0.125/x^3 -1)}, {x}) node[above,pos=0.9]{};
\addplot [domain = 0:0.5, samples = 300]
({-sqrt(0.125/x^3 -1)}, {x});
\addplot [domain = 0:0.5, samples = 300]
(1, {x}) node[above,pos=1]{$ \overline{\Sigma}_2 $};
\addplot [domain = 0:0.5, samples = 300]
(-1, {x}) node[above,pos=1]{$ \overline{\Sigma}_1 $};  
\end{axis}

\begin{axis}[name = plot3,at={($(plot1.east)+(1.5cm,-2cm)$)},anchor=outer west,axis x line=middle,
scale only axis,
width = 1/4*\textwidth,
height =1/4*\textwidth ,
axis y line=middle,
x label style = {anchor=north},
y label style = {anchor=east},
axis equal,
xlabel = {$ r_\alpha $},
ylabel = {$ \beta_\alpha $},
ymin = -2,ymax=2,
xmin = -2, xmax = 2]
\addplot [domain = 0.09:1.5, samples = 300]
({x}, {0.25/x}) node[below,pos=0.65]{}; 
\addplot [domain = -0.09:-1.5, samples = 300]
({x}, {0.25/x}) node[above,pos=0.65]{}; 
\addplot [domain = 0:0.5, samples = 300]
(1+x^2-x^3, {x}) node[above,pos=1]{$ \widehat{\Sigma}_3 $};
\addplot [domain = -0.5:0, samples = 300]
(-1-x^2-x^3, {x}) node[below,pos=0]{$ \widehat{\Sigma}_0 $};
\addplot [domain = 0:0.5, samples = 300]
(x, 1) node[above,pos=1]{$ \widehat{\Sigma}_2 $};
\addplot [domain = -0.5:0, samples = 300]
(x, -1) node[below,pos=0]{$ \widehat{\Sigma}_1 $};
\end{axis}

\draw[->] ($(plot1.east) + (0.5cm,0.5cm)$) -- ($ (plot2.south west) +(1cm,1cm)  $) node[midway,above]{$ P_y $};
\draw[->] ($(plot1.east) + (0.5cm,-0.5cm)$) -- ($ (plot3.north west) + (1cm,-1cm)$) node[midway,above]{$ P_x $};

\end{tikzpicture}
			\caption{A plot of a true orbit in $ \R^2 $ and its image in the $ x $ and
        $ y $-directional charts.}
			\label{fig:directionalBlowup}
		\end{figure}
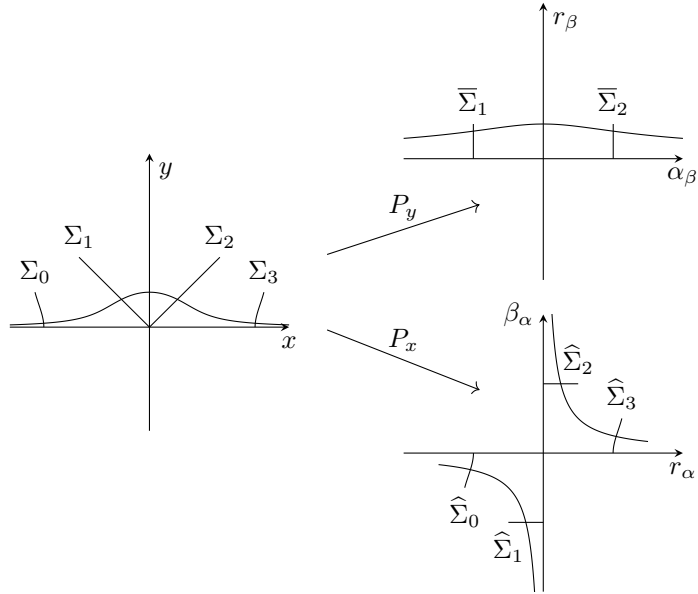
	
		A different representation of $ \mathcal{B}_2 $ is shown in Figure
    \ref{fig:mobiusbundle}. The blow-up space $ \mathcal{B}_2 $ can be thought
    of as the tautological line bundle over $ \RP $, which in turn is
    diffeomorphic to a M\"{o}bius band. This is represented by embedding the
    band in $ \R^3 $. From the resulting figure it can be seen that $
    \mathcal{B}_2 $, considered as a line bundle over $ \RP $, is in fact a
    M\"{o}bius bundle. Consequently, as an orbit follows around the collision
    manifold $ \mathcal{C} \cong \RP \cong S^1 $, the line bundle changes
    orientation. In the $ P_x $ chart, this implies $ r_\alpha \to -r_\alpha $
    as an orbit passes around $ \mathcal{C} $.
		
		\begin{figure}[ht]
			\centering
	\def\svgwidth{0.6\linewidth}
\begingroup%
  \makeatletter%
  \providecommand\color[2][]{%
    \errmessage{(Inkscape) Color is used for the text in Inkscape, but the package 'color.sty' is not loaded}%
    \renewcommand\color[2][]{}%
  }%
  \providecommand\transparent[1]{%
    \errmessage{(Inkscape) Transparency is used (non-zero) for the text in Inkscape, but the package 'transparent.sty' is not loaded}%
    \renewcommand\transparent[1]{}%
  }%
  \providecommand\rotatebox[2]{#2}%
  \newcommand*\fsize{\dimexpr\f@size pt\relax}%
  \newcommand*\lineheight[1]{\fontsize{\fsize}{#1\fsize}\selectfont}%
  \ifx\svgwidth\undefined%
    \setlength{\unitlength}{635.26660348bp}%
    \ifx\svgscale\undefined%
      \relax%
    \else%
      \setlength{\unitlength}{\unitlength * \real{\svgscale}}%
    \fi%
  \else%
    \setlength{\unitlength}{\svgwidth}%
  \fi%
  \global\let\svgwidth\undefined%
  \global\let\svgscale\undefined%
  \makeatother%
  \begin{picture}(1,0.78640372)%
    \lineheight{1}%
    \setlength\tabcolsep{0pt}%
    \put(0,0){\includegraphics[width=\unitlength,page=1]{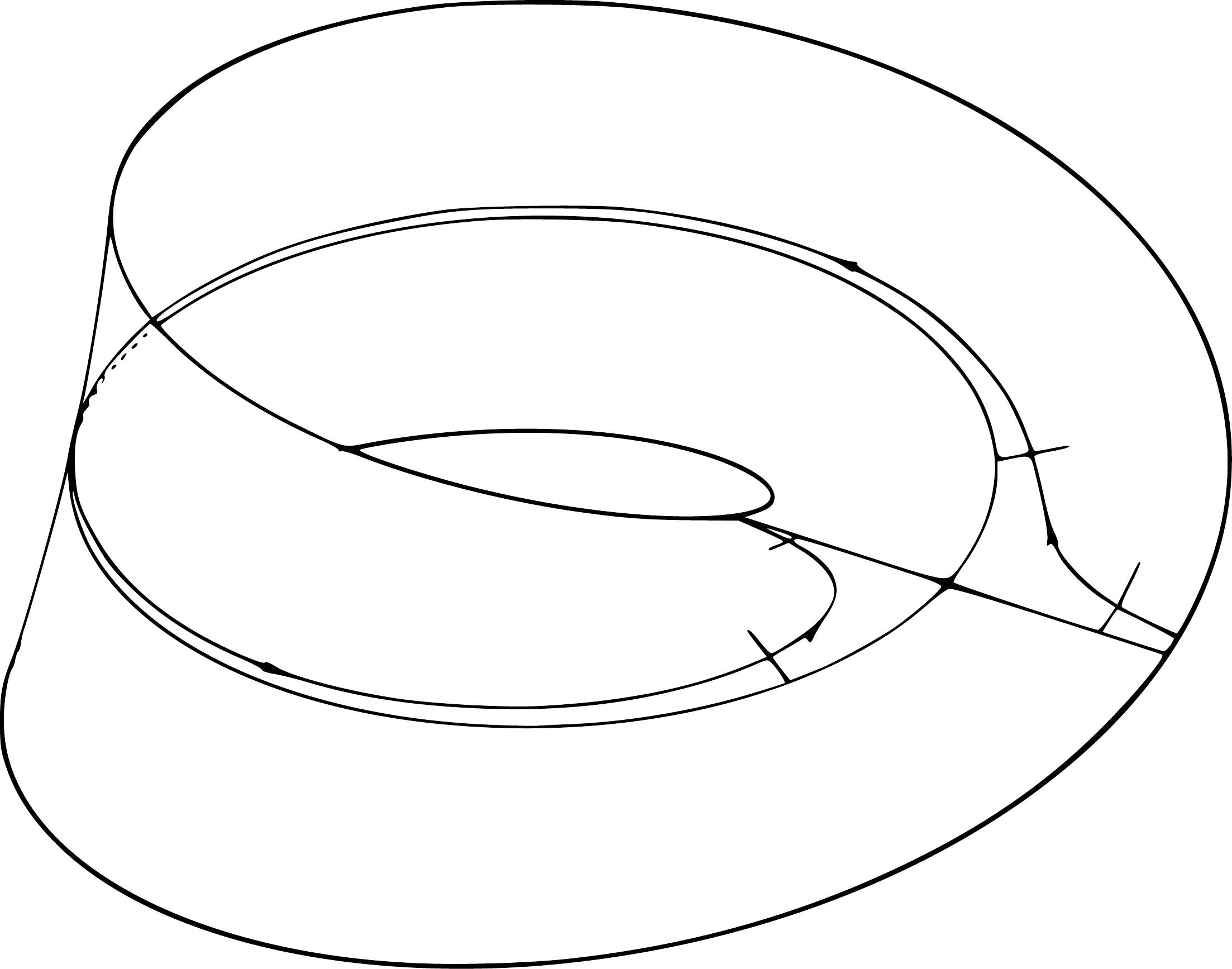}}%
    \put(0.76082016,0.26806717){\color[rgb]{0,0,0}\makebox(0,0)[lt]{\lineheight{2}\smash{\begin{tabular}[t]{l}$ \NHIM $\end{tabular}}}}%
    \put(0.86999405,0.42402894){\color[rgb]{0,0,0}\makebox(0,0)[lt]{\lineheight{2}\smash{\begin{tabular}[t]{l}$ \Sigma_1^+ $\end{tabular}}}}%
    \put(0.92478473,0.33865145){\color[rgb]{0,0,0}\makebox(0,0)[lt]{\lineheight{2}\smash{\begin{tabular}[t]{l}$ \Sigma_0 $\end{tabular}}}}%
    \put(0.55308758,0.27427989){\color[rgb]{0,0,0}\makebox(0,0)[lt]{\lineheight{2}\smash{\begin{tabular}[t]{l}$ \Sigma_2^+ $\end{tabular}}}}%
    \put(0.59141721,0.31620501){\color[rgb]{0,0,0}\makebox(0,0)[lt]{\lineheight{2}\smash{\begin{tabular}[t]{l}$\Sigma_3$\\\end{tabular}}}}%
    \put(0.69127931,0.4737931){\color[rgb]{0,0,0}\makebox(0,0)[lt]{\lineheight{2}\smash{\begin{tabular}[t]{l}$ \mathcal{C} $\end{tabular}}}}%
    \put(0,0){\includegraphics[width=\unitlength,page=2]{mobiusbundle.pdf}}%
  \end{picture}%
\endgroup%

 			\caption{A blow-up of $ \R^2 $ to $ \mathcal{B}_2 $. The collision
        manifold $ \mathcal{C} $ is given by the homoclinic connecting $
        \mathcal{N} $ to itself. A near collision orbit with positive $ x $ is
        plotted in $ \R^2 $ as well as the corresponding pre-image in $
        \mathcal{B}_2 $.}
 			\label{fig:mobiusbundle}
		\end{figure}
		
		In the example Figure \ref{fig:mobiusbundle}, $ \mathcal{N} $ is a
    hyperbolic saddle. However, due to the required time reversing, the
    directions of orbits are not continuous in a neighbourhood of $ \mathcal{N}
    $. The following definition is useful for discussing dynamical objects when
    the exact directions of time are irrelevant.
		\begin{definition}
			Call a singular point $ p $ an \textit{orbital hyperbolic saddle} if, in a
      neighbourhood of $ p $, the flow is orbitally equivalent to the flow of a
      hyperbolic saddle. Similarly define \textit{orbital heteroclinic
        connection}, \textit{orbital focus}, etc.
		\end{definition}
		As an example of this definition, in Figure \ref{fig:mobiusbundle}, one
    would say $ \mathcal{N} $ is an orbital hyperbolic saddle and $ \mathcal{C}
    $ constitutes an orbital homoclinic connection of $ \mathcal{N} $.
    
    After these preliminary consideration in a low-dimensional example where 
    $\mathcal{N}$ is a point instead of a manifold of fixed points and 
    the blow-up is of $\R^2$ instead of $\C^2$ we are now going to describe the 
    dynamics near the simultaneous binary collision in the 4-body problem.
		
    \subsection{Dynamics on the Collision Manifold}
		In this section, the consequences of the blow-up and desingularisation
    procedure are harvested. The flow on the collision manifold $ \mathcal{C} $
    is given by setting $ r_\eta = 0 $ in each $ X_\eta $. Note that in each
    chart the $ (h_1,h_2,\Gamma_1,\Gamma_2,x,y) $ components of the vector field
    can be factored by $ r_\eta $. Each of these components consequently vanish
    on the collision manifold, that is, $(h_1,h_2,\Gamma_1,\Gamma_2,x,y) $ are
    integrals on $ \mathcal{C} $. Geometrically it can be said that the
    collision manifold $ \mathcal{C} $ is foliated by invariant $ \RP^3 $. The
    flow on each $ \RP^3 $ depends on $ \Gamma_1^*,\Gamma_2^* $, but is invariant
    under a choice of values of $ (h_1^*, h_2^*, x^*, y^*) $. A first study of
    the flow on $ \mathcal{C} $ produces the following proposition on its
    topological structure.
		
		\begin{proposition}\label{prop:structureOfCollisionManifold}
			The collision manifold $ \mathcal{C} $ is an orbital homoclinic connection
      of a normally hyperbolic manifold of fixed points, $ \mathcal{N} $, that
      is diffeomorphic to $ \mathcal{M} = \R^2 \times {\hat{\C}}^2 \times \C^2
      $. The following properties of $ \mathcal{N} $ hold:
			\begin{enumerate}[(i)]
      \item $ \mathcal{N} $ is given by the graph $
        (\pmb{\zeta},[\alpha,\beta,\gamma,\delta] ) = (0,
        [1,0,1,0]) $.
				\item The normal bundle of $ \mathcal{N} $ is 4-dimensional in the $
          (r,[\zeta_{11},\zeta_{12},\zeta_{21},\zeta_{22}]) $ directions.
				\item The orbital homoclinic connection is foliated by invariant $ \RP^3
          $.
				\item In the normal bundle $ \mathcal{N} $ is an orbital, resonant
          hyperbolic saddle with (un)stable manifolds depending on the direction
          of time. In one choice, each point in $ \mathcal{N} $ has a
          3-dimensional unstable manifold and a 1-dimensional stable manifold.
          The dimensions are swapped in the alternative choice. Denote this
          1-dimensional manifold at a point $ p\in\mathcal{N} $ by $
          \mathcal{E}_p $.
				\item The 3-dimensional manifold of a point in $ \mathcal{N} $ coincides
          with the homoclinic connection whilst the 1-dimensional manifold $
          \mathcal{E}_p $ is normal to $ \mathcal{C} $.
				\item In the 3-dimensional unstable case, the eigenvalues are of the
          form $ \lambda_1 < 0 < \lambda_2 = \lambda_3 < \lambda_4 $ with ratios
          of hyperbolicity $ -\lambda_1 : \lambda_j $ a constant $ 1:3 $, $1:1$,
          $ 1:1 $ for any fixed point on $ \mathcal{N} $.
        \end{enumerate}
      \end{proposition}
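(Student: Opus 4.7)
The plan is to work chart by chart using the explicit desingularised vector fields (\ref{eqn:twista})--(\ref{eqn:twistd}), together with the fact established immediately before the proposition that $(h_1,h_2,\Gamma_1,\Gamma_2,x,y)$ are first integrals on $\mathcal{C}$.

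First I locate $\mathcal{N}$. Setting $r_\eta=0$ and demanding the remaining components vanish, charts $\beta$ and $\delta$ yield no fixed points because $\gamma_\beta^2+1$ and $\gamma_\delta^2+1$ never vanish in (\ref{eqn:twistb}) and (\ref{eqn:twistd}). In the $\alpha$-chart (\ref{eqn:twista}) the only solution is $(\beta_\alpha,\gamma_\alpha,\delta_\alpha)=(0,1,0)$, corresponding to the projective point $[1,0,1,0]\in\RP^3$; the $\gamma$-chart (\ref{eqn:twistg}) yields the same projective point once one uses $|u_2|\to 1$ on $\mathcal{C}$ (Proposition \ref{prop:asymptotics}). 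Since $(h_j,\Gamma_j,x,y)$ are free on the resulting zero set, this produces the smooth graph $\mathcal{N}\cong\mathcal{M}$ of (i) and identifies the normal bundle with the four blow-up directions, giving (ii).

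Linearising the $\alpha$-chart system at $(r_\alpha,\beta_\alpha,\gamma_\alpha,\delta_\alpha)=(0,0,1,0)$, the $4\times 4$ normal block is lower triangular in these coordinates with diagonal entries $1,-1,-3,-1$; hence its eigenvalues are $\{1,-1,-3,-1\}$ regardless of the subdominant $O(r_\alpha)$ couplings, and they are independent of the point in $\mathcal{N}$. All four are nonzero, so $\mathcal{N}$ is normally hyperbolic. The eigenvalue $+1$ is attached to $r_\alpha$ (transverse to $\mathcal{C}$), and the eigenvalues $-1,-3,-1$ are attached to $(\beta_\alpha,\gamma_\alpha,\delta_\alpha)$ (tangent to $\mathcal{C}$). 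Reversing the time convention---permissible because the statement is about orbital hyperbolicity---produces the 3-dimensional unstable case with $\lambda_1=-1<0<\lambda_2=\lambda_3=1<\lambda_4=3$ and ratios of hyperbolicity $-\lambda_1:\lambda_j$ equal to $1:3,\,1:1,\,1:1$. Consistency of the time-reversal convention across charts is precisely the M\"obius-bundle feature already illustrated in Figure~\ref{fig:mobiusbundle}. This handles (iv), (v) and (vi).

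Property (iii) follows at once from the integrals $(h_j,\Gamma_j,x,y)$: fixing their values carves out a $3$-dimensional invariant subset of $\mathcal{C}$, namely a $\RP^3$-fiber. For the orbital homoclinic structure, I observe that in one time orientation the three tangent eigenvalues are all strictly negative, so the local stable manifold of the fixed point inside a $\RP^3$-fiber has dimension $3=\dim\RP^3$; since $\RP^3$ is compact and connected and the fixed point is the only equilibrium on the fiber, the basin must exhaust $\RP^3\setminus\{p^*\}$, and the time-reversed statement supplies the unstable side of the homoclinic connection.

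The main obstacle I anticipate is this last step: verifying uniqueness of the equilibrium on each fiber (straightforward, since the chart computation produces only one) and, more importantly, ruling out other $\omega$-limit sets---periodic orbits or exotic recurrent behaviour---intrinsic to the $3$-dimensional fiber dynamics. To close this I would exploit the two additional integrals on $\RP^3$ promised in the section preamble; already in the $\alpha$-chart one verifies that the ratio $\beta_\alpha/\delta_\alpha$ is constant because $\beta_\alpha$ and $\delta_\alpha$ satisfy the same equation $x'=-xF$ with $F=\gamma_\alpha^2+\delta_\alpha^2$ at $r_\alpha=0$. A second independent invariant then reduces the fiber flow to a one-parameter family of curves along which the basin argument is immediate, completing the homoclinic picture.
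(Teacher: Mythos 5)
Your proposal follows essentially the same route as the paper's proof: locate the unique equilibrium $(\beta_\alpha,\gamma_\alpha,\delta_\alpha)=(0,1,0)$ in the $\psi_\alpha$ chart at $r_\alpha=0$, read off the graph structure of $\mathcal{N}$ and the foliation by invariant $\RP^3$ from the vanishing of the $(h,\Gamma,x,y)$ components, compute the Jacobian with eigenvalues $\{1,-1,-3,-1\}$ to get normal hyperbolicity, the $1{:}3$, $1{:}1$, $1{:}1$ ratios, and the splitting into the $r_\alpha$-normal direction versus the fiber-tangent directions, with the time-reversal handled orbitally. Your chart-by-chart check that the $\beta$- and $\delta$-charts carry no equilibria, and your observation that lower-triangularity makes the spectrum insensitive to the $O(r_\alpha)$ couplings, are slightly more thorough than the paper, which works only in $\psi_\alpha$ and displays a diagonal Jacobian. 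The one genuine divergence is the final step, that every orbit on a fiber is homoclinic to $p^*$. The paper disposes of this in one line: the fiber is compact with a single equilibrium whose stable manifold is $3$-dimensional, hence the stable manifold (as a line field) is all of $\RP^3$. You correctly point out that compactness plus uniqueness of the equilibrium does not by itself exclude other minimal sets (periodic orbits, recurrence) in the complement of the open stable manifold, and you propose to close this with the two fiber integrals. That plan is sound and not circular: the invariance of $\delta_\alpha/\beta_\alpha$ you verify is exactly $\kappa_1$, and the second integral $\kappa_2$ is established by direct computation in Proposition \ref{prop:CollisionManifoldInvariants} (which logically does not depend on the present proposition, only on the chart vector fields), after which the transverse intersection of level sets reduces the fiber dynamics to curves through $p^*$ where the basin argument is immediate. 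So your version is, if anything, a more defensible treatment of the one nontrivial step; to make it a complete proof you would need to actually import (or reorder) the integrability computation rather than leave it as intent.
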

      \begin{proof}
        The desingularised vector field in the chart $ \psi_{\alpha} $ is given
        by setting $ r_\alpha = 0 $ in the vector field $ X_\alpha $. Note when
        $ r_\alpha = 0 $ each of $ r_\alpha^\prime = h_1^\prime = h_2^\prime =
        \Gamma_1^\prime=\Gamma_2^\prime=x^\prime=y^\prime = 0 $. This
        immediately gives the result that $ \mathcal{C} $ is foliated by
        invariant $ \RP^3 $.
			
        Now, for each choice of constants $ (h_1^*, h_2^*,\Gamma_1^*,\Gamma_2^*,
        x^*, y^*)\in\R^2 \times \hat{\C}^2 \times \C^2 ,$ we have precisely one
        equilibrium of $ X_\alpha $ at $
        (\beta_{\alpha},\gamma_{\alpha},\delta_{\alpha}) = (0,1,0) $. This
        equilibrium point maps to the point
        \[ (\pmb{\zeta},[\alpha,\beta,\gamma,\delta],
          h_1,h_2,\Gamma_1,\Gamma_2,x,y) = (0,[1,0,1,0], h_1^*,
          h_2^*,\Gamma_1^*,\Gamma_2^*, x^*, y^* ) =: p^* \] on the collision
        manifold $ \mathcal{C} $. Hence, we have a manifold of fixed points $
        \mathcal{N} $ given by the graph $
        (\pmb{\zeta},[\alpha,\beta,\gamma,\delta]) = (0,
        [1,0,1,0]) $. The Jacobian at
        each point $ p^* $ in the $ \psi_{\alpha} $ chart is given by,
			\begin{equation}
				DX_\alpha|_{p^*}  = 
				\left(\begin{array}{@{}cc@{}}
					\begin{matrix}
						1 	& 0 	& 0	 	& 0 \\
						0	& -1	& 0		& 0 \\
						0	& 0 & -3 & 0 \\
						0	& 0 & 0 & -1 
					\end{matrix}
					& \bigzero \\
					\bigzero & \bigzero
				\end{array}\right).
			\end{equation}
			
			Consequently, at each point $ p^* \in \mathcal{N} $ we have a center
      manifold in the $ h_1,h_2,\Gamma_1,\Gamma_2,x,y $ direction, that is,
      coinciding with the tangent space of $ \mathcal{N} $. In the normal bundle
      of $ \mathcal{N} $, there are 4 non-zero eigenvalues, $ \{1, -1, -3, -1 \}
      $, giving the normal hyperbolicity of $ \mathcal{N} $ and the saddle
      structure. Eigenvectors are given by the basis vectors. It can be
      concluded, for this choice of direction, each point $ p^*\in\mathcal{N} $
      has a 1-dimensional unstable manifold normal to $ \mathcal{C} $ (in the
      direction of $ r_\alpha $ for the $ \psi_{\alpha} $ chart), and a
      3-dimensional stable manifold tangent to the invariant $ \RP^3 $
      corresponding to $ p^* $ (in the $
      (\beta_{\alpha},\gamma_{\alpha},\delta_{\alpha}) $ directions).
			
			The ratios of the eigenvalues are precisely $ 1:1 $, $ 1:3 $, $ 1:1 $ as asserted.
			
			Lastly, because each invariant $ \RP^3 $ is compact with a single
      equilibrium whose stable manifold is 3-dimensional and tangent to $ \RP^3
      $, the stable manifold, considered only as a line field, must in fact be
      all of $ \RP^3 $. That is, every orbit in the collision manifold is a
      homoclinic connection of a point $ p \in \mathcal{N} $.
		\end{proof}
		
		The fact the 1 dimensional manifold $ \mathcal{E}_p $ at each point $ p\in
    \mathcal{N} $ is normal to the collision manifold $ \mathcal{C} $ provides
    some crucial information. Let $\mathcal{U}$ be a tubular neigbourhood of
    $\NHIM$. Due to the time reversal, $ \mathcal{E}_p $ on one side of $
    \mathcal{C} $ in $\mathcal{U}$ is asymptotically approaching $ p $ in
    forward time, whilst the other side is approaching $ p $ in negative time.
    Denote these two halves by $ \mathcal{E}^+_p$ and $ \mathcal{E}^-_p $
    respectively. As the orbit $ \mathcal{E}^+_p $ is approaching the collision
    manifold in forward time it must in fact be a unique collision orbit with
    asymptotic values $ p = (h_1^*,h_2^*,\Gamma_{1}^*,\Gamma_2^*,x^*,y^*) $.
    Likewise, $ \mathcal{E}^-_p $ is an ejection orbit with the same asymptotic
    values $ p $. Hence, we call the entire $ \mathcal{E}_p $ a
    \textit{collision-ejection orbit}. 
    
    By the standard stable manifold theorem
    for normally hyperbolic invariant manifolds, not only is each $
    \mathcal{E}_p $ a smooth manifold, but the bundle $ \mathcal{E} := \cup_p
    \mathcal{E}_p $, called the \textit{collision-ejection manifold}, is smooth
    \cite{fenichelPersistenceSmoothnessInvariant1972,hirschInvariantManifolds1977}.
    When projecting $ \mathcal{E} \subset \mathcal{B}\times\mathcal{M} $ to $
    \R^4\times\mathcal{M} $ via $ \Psi\times Id $, the conjugacy outside of $
    \mathcal{C} $ yields a true manifold with the same properties. The result is
    summarised in the following corollary. It first appears in
    \cite{elbialy1996flow} albeit with a different proof.
		
		\begin{corollary}\label{cor:collisionEjectionIsSmooth}
			Each collision orbit is connected to a unique ejection orbit with the same
      asymptotic values. Moreover, the union of the collision and ejection
      orbits forms a smooth invariant manifold $ \mathcal{E} $.
		\end{corollary}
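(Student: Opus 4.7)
The plan is to deduce the corollary directly from Proposition~\ref{prop:structureOfCollisionManifold} together with the standard invariant manifold theory for normally hyperbolic invariant manifolds (NHIMs). Fix a point $p\in\NHIM$. By part (iv)--(v) of Proposition~\ref{prop:structureOfCollisionManifold}, $p$ has a one-dimensional invariant manifold $\mathcal{E}_p$ in the normal bundle of $\NHIM$, transverse to the collision manifold $\mathcal{C}$. Because of the time reversal discussed after the blow-up (the rescaling by $1/r_\eta$ flips sign as one crosses $\mathcal{C}$), the two halves $\mathcal{E}_p^+$ and $\mathcal{E}_p^-$ of $\mathcal{E}_p\setminus\{p\}$ behave oppositely: one half consists of trajectories that accumulate on $p$ in forward $\tau$-time, and the other of trajectories that accumulate on $p$ in backward $\tau$-time.

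Next, I would identify $\mathcal{E}_p^+$ and $\mathcal{E}_p^-$ with physical orbits. Since the vector field $X_\mathcal{B}$ is conjugate to $X$ off $\mathcal{C}$, the forward-asymptotic half $\mathcal{E}_p^+$ maps under $\Psi\times \mathrm{Id}$ to an orbit in the original Levi-Civita coordinates that tends to $\sbc$ with the asymptotic data $p = (h_1^*,h_2^*,\Gamma_1^*,\Gamma_2^*,x^*,y^*)$; this is by definition a collision orbit in the sense of Proposition~\ref{prop:asymptotics}. Likewise $\mathcal{E}_p^-$ is an ejection orbit with the same asymptotic data. Uniqueness of the partner ejection orbit follows because, by part (v), $\mathcal{E}_p$ is the unique one-dimensional invariant manifold of $p$ transverse to $\mathcal{C}$, and both halves share the same base point $p\in\NHIM$.

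For the smoothness of $\mathcal{E} = \bigcup_{p\in\NHIM}\mathcal{E}_p$ I would invoke the stable/unstable manifold theorem for NHIMs of Fenichel and Hirsch--Pugh--Shub: normal hyperbolicity of $\NHIM$, which was established in Proposition~\ref{prop:structureOfCollisionManifold}, guarantees that the one-dimensional strong stable and strong unstable bundles over $\NHIM$ integrate to smooth invariant manifolds $W^{ss}(\NHIM)$ and $W^{uu}(\NHIM)$ of the blown-up system. In our setting these two bundles are the two halves of $\mathcal{E}$ and they are glued along $\NHIM$; smoothness of each half on $\mathcal{B}\times\mathcal{M}$ together with the common tangency to the one-dimensional normal eigenspace at every point of $\NHIM$ yields a smooth one-parameter family of smooth curves, and thus a smooth manifold $\mathcal{E}$ in $\mathcal{B}\times\mathcal{M}$ of dimension $\dim\NHIM + 1$.

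Finally, I would transport this back through $\Psi\times\mathrm{Id}$. Away from $\mathcal{C}$ the projection is a diffeomorphism, so the image of $\mathcal{E}\setminus\NHIM$ in $\R^4\times\mathcal{M}$ is a smooth invariant submanifold; its closure in the ambient phase space, obtained by adjoining the collision set $\sbc$, is the desired $\mathcal{E}$ of the statement. The main obstacle here is simply the careful bookkeeping of the two time-orientations on the two sides of $\mathcal{C}$ so that the strong stable manifold from the NHIM theorem really corresponds to collision orbits on one side and to ejection orbits on the other; the smoothness itself is immediate from standard NHIM theory once the hyperbolicity and the $1{:}1{:}3{:}1$ structure from Proposition~\ref{prop:structureOfCollisionManifold} are in place.
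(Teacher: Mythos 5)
Your proposal is correct and follows essentially the same route as the paper: it identifies the two halves of the one-dimensional normal invariant manifold $\mathcal{E}_p$ as the collision and ejection orbits with asymptotic data $p$ via the time reversal across $\mathcal{C}$, invokes the Fenichel/Hirsch--Pugh--Shub invariant manifold theory for the normally hyperbolic manifold $\mathcal{N}$ to get smoothness of the bundle $\mathcal{E}=\bigcup_p\mathcal{E}_p$, and transports the result back through $\Psi\times\mathrm{Id}$ using the conjugacy off the collision manifold. The extra remark about gluing the two half-bundles along $\mathcal{N}$ is a slightly more explicit version of the same argument, not a different one.
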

		The corollary is somewhat visualised in Figures \ref{fig:directionalBlowup}
    and \ref{fig:mobiusbundle}. In the figures, the union of the ingoing and
    outgoing asymptotic orbit is given by a 1-dimensional manifold emanating
    from $ \mathcal{N} $ and normal to $ \mathcal{C} $. The manifold is smooth
    and so to is it's projection into $ \R^2 $.
		
		Corollary \ref{cor:collisionEjectionIsSmooth} leads to another neat
    consequence of Proposition \ref{prop:structureOfCollisionManifold}. Once the
    existence of a $ C^0 $ block map $ \pib $ is established in Theorem
    \ref{thm:C0Regularisable} below, the smoothness of $ \mathcal{E} $
    guarantees that this $ \pib $, restricted to $ \mathcal{E} $, is smooth. The
    following corollary is immediate.
		\begin{corollary}\label{cor:subproblemsRegularisable}
			Consider a sub-problem $ \mathcal{P} $ of the planar $ 4 $-body problem
      which is entirely contained within the collision-ejection manifold $
      \mathcal{E} $. Inside of $ \mathcal{P} $, the set of simultaneous binary
      collisions are $ C^\infty $-regularisable.
		\end{corollary}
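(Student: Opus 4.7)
The plan is to deduce this directly from the smoothness of the collision-ejection manifold $\mathcal{E}$ established in Corollary \ref{cor:collisionEjectionIsSmooth}, combined with the existence of the $C^0$ block map $\pib$ provided by Theorem \ref{thm:C0Regularisable}. Let $\Sigma^+, \Sigma^-$ denote the transversal sections defining the isolating block around the manifold of simultaneous binary collisions $\sbc$.

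First, I would observe that $\mathcal{E}$ is smooth and flow-invariant, so by transversality $\mathcal{E}\cap \Sigma^\pm$ are smooth submanifolds of $\Sigma^\pm$. Since the sub-problem $\mathcal{P}$ is a flow-invariant subset entirely contained in $\mathcal{E}$, its intersections $\mathcal{P}\cap \Sigma^\pm$ inherit smooth structures. I would then identify the restricted block map $\pib|_{\mathcal{P}\cap \Sigma^+}$ with the flow-induced transition map on $\mathcal{E}$: off the collision set this is simply the Poincar\'e map of the smooth flow on $\mathcal{E}$, while on collision orbits, Corollary \ref{cor:collisionEjectionIsSmooth} assigns to each collision orbit $\mathcal{E}^+_p$ a unique, smoothly varying ejection orbit $\mathcal{E}^-_p$ sharing the same asymptotic data $p\in\mathcal{N}$. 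Gluing these yields a smooth map from $\mathcal{P}\cap\Sigma^+$ to $\mathcal{P}\cap\Sigma^-$.

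The remaining step is to verify that the $C^0$ extension $\pib$ from Theorem \ref{thm:C0Regularisable} must coincide on $\mathcal{P}$ with this smooth correspondence. This follows from uniqueness: any continuous flow-respecting extension sends a collision orbit with asymptotic data $p = (h_1^*,h_2^*,\Gamma_1^*,\Gamma_2^*,x^*,y^*)$ to an ejection orbit with the same asymptotic data, and Corollary \ref{cor:collisionEjectionIsSmooth} supplies a unique such ejection orbit, namely $\mathcal{E}^-_p$. The main (and only mild) obstacle in the argument is precisely this matching of the two a priori differently defined maps at collision, but it is forced by continuity together with the asymptotic characterisation of orbits in Proposition \ref{prop:asymptotics}. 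Beyond this, the conclusion that $\pib|_{\mathcal{P}\cap\Sigma^+}$ is $C^\infty$ is immediate from the smoothness of $\mathcal{E}$ and the smooth dependence of the collision-ejection correspondence on $p$.
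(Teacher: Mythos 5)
Your argument is correct and follows essentially the same route as the paper: the paper likewise deduces the corollary immediately from the smoothness of $\mathcal{E}$ (Corollary \ref{cor:collisionEjectionIsSmooth}, via the stable manifold theorem for normally hyperbolic invariant manifolds) together with the existence of the $C^0$ block map of Theorem \ref{thm:C0Regularisable}, observing that $\pib$ restricted to $\mathcal{E}$ is then smooth. The only cosmetic remark is that since $\mathcal{P}\subset\mathcal{E}$ and $\mathcal{E}$ is by definition the union of collision and ejection orbits, every orbit of $\mathcal{P}$ entering the block is already a collision--ejection orbit, so the ``gluing'' with a separate off-collision Poincar\'e map is not actually needed.
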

		In particular, the rhomboidal and symmetric collinear sub-problems are $
    C^\infty $-regularisable. For details on these sub-problems see, for
    instance, \cite{alvarez-ramirezReviewPlanarCaledonian2014}. This observation
    agrees with the regularisation results of
    \cite{Bakker2011,sekiguchiSymmetricCollinearFourBody2004,sivasankaranGlobalRegularisationIntegrating2010}.
	
	\subsection{Integrability of the flow on the Collision Manifold}\label{sec:untwistingIntegrals}

		In the $ \zeta_1,\zeta_2 $ coordinates some key properties of the flow on
    the collision manifold become clear. Note that $ \beta_{\delta},
    \delta_{\beta} $ are local integrals of the flow on the collision manifold
    in the respective charts $ \psi_{\delta}, \psi_{\beta} $. As remarked in
    Proposition \ref{prop:structureOfCollisionManifold}, so too are $
    h_1,h_2,\Gamma_1,\Gamma_2,x,y $. In fact, the flow in each desingularised
    vector field $ X_\eta $ is integrable.
		
		To see this, recall that $ L_j := \im(\zeta_j) =
    \frac{1}{2}a_j^{1/3}\im(\bar{u}_j \tilde{z}_j ) $ is proportional to the
    intrinsic angular momentum of the $ j^{th} $ distressed binary,
    $\frac{1}{2}\im(\bar{u}_j\tilde{z}_j)$. Projecting this (scaled) intrinsic
    angular momentum $ L_j $ onto $ \mathcal{C} $ results in the projective
    coordinate $ [0,\beta,0,0] $ for $ j=1 $ and $ [0,0,0,\delta] $ for $ j=2 $.
    The ratio $ L_2/L_1 $ is invariant under the blow-up $ \Psi $ and maps onto
    $ \mathcal{C} $ as $ \kappa_1 := \delta/\beta $. Hence $ \kappa_1 $ is an
    integral.
		
		There is a second integral $ \kappa_2 $ related to the original time of the
    system $t$. Take the leading order terms in $ (\zeta_1,\zeta_2) $ of system
    \eqref{eqn:generalisedLeviCivita}, 
		\[ \dot{\zeta}_j = |\zeta_j|^{-2} \implies 
		\begin{aligned}
			\dot{I}_j &= (I_j^2 + L_j^2)^{-1} \\
			\dot{L}_j &= 0,
		\end{aligned} \] with $\zeta_j = I_j + i L_j$.

  Clearly each $ L_j $ is an integral of the leading terms. This observation
  leads to the remarks above on $ \kappa_1 $. But also the equation for $ I_j $
  can be integrated to give $ 3 t - 3 t_0 = I_j^3 + 3 I_j L_j^2 $ for $ j= 1,2
  $. Taking the equation for $ j=1 $ and subtracting for $ j=2 $ yields $ (I_1^3
  + 3 I_1 L_1^2) - (I_2^3 + 3 I_2 L_2^2) $. As argued above, taking the ratio
  with $ L_1^3 $ or $L_2^3$, this approximate integral descends to a true integral $
  \kappa_2 $ on the collision manifold $ \mathcal{C} $.
		
  After taking care of some technicalities, we will now show the integrability of
  the line field on the collision manifold $ \mathcal{C} $.

		\begin{definition}
			We say a manifold $ \mathcal{K} \subset \mathcal{C} $ is \textit{orbitally
        invariant} if the image in each chart $ \psi_{\eta}(\mathcal{K}),\ \eta
      \in \{\alpha,\beta,\gamma,\delta \} $ is an invariant manifold of the
      desingularised vector fields $ X_\eta $.
		\end{definition}
		\begin{proposition}\label{prop:CollisionManifoldInvariants}
			Define the two smooth functions $ \hat{\kappa}_1,\hat{\kappa}_2 :
      \mathcal{C}\cong \RP^3 \to \RP $ by,
			\begin{align*}
				\hat{\kappa}_1([\alpha,\beta,\gamma,\delta]) &= [\beta,\delta] \\
				\hat{\kappa}_2([\alpha,\beta,\gamma,\delta]) &= [\beta^3, (\alpha^3 + 3 \beta^2 \alpha) - (\gamma^3 + 3 \delta^2 \gamma)].
			\end{align*}
			Then for each $ w,v \in \RP $, the closures of the pre-images $
      \overline{\hat{\kappa}_1^{-1}(w)},\overline{\hat{\kappa}_2^{-1}(v)} $ are
      smooth, orbitally invariant manifolds that intersect transversally.
		\end{proposition}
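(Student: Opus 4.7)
The approach is to view each pre-image as the projective zero locus of a single homogeneous polynomial in $\RP^3$, and then verify smoothness, orbital invariance, and transversality by explicit computation in the charts $\psi_\eta$. First, I would record the defining equations. Writing $w = [w_1:w_2]$ and $v = [v_1:v_2]$, the closures in $\RP^3$ are the projective varieties
\begin{align*}
\overline{\hat{\kappa}_1^{-1}(w)} &= \{\, w_2\beta - w_1\delta = 0 \,\}, \\
\overline{\hat{\kappa}_2^{-1}(v)} &= \{\, v_2\beta^3 - v_1[(\alpha^3 + 3\beta^2\alpha) - (\gamma^3 + 3\delta^2\gamma)] = 0 \,\}.
\end{align*}
The first is a projective hyperplane, hence already a smooth $\RP^2 \subset \RP^3$. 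For the cubic surface defined by the second equation I would compute the gradient of $F := v_2\beta^3 - v_1 P$, with $P := (\alpha^3 + 3\beta^2\alpha) - (\gamma^3 + 3\delta^2\gamma)$. The $\alpha$- and $\gamma$-components of $\nabla F$ are $-3v_1(\alpha^2 + \beta^2)$ and $3v_1(\gamma^2 + \delta^2)$, whose simultaneous vanishing for $v_1 \neq 0$ would force $\alpha = \beta = \gamma = \delta = 0$, impossible in $\RP^3$. The two boundary values $v = [0:1]$ and $v = [1:0]$ are handled separately: the former yields the smooth hyperplane $\{\beta = 0\}$, and for the latter the same gradient argument applies directly to $P$.

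Next I would establish orbital invariance by working in the chart $\psi_\beta$, with $\psi_\alpha$ analogous and $\psi_\gamma, \psi_\delta$ following from the $1 \leftrightarrow 2$ binary-swapping symmetry. From \eqref{eqn:twistb} one has $\delta_\beta' = 0$, so $\hat{\kappa}_1$ is manifestly invariant, and a direct calculation
\[ \frac{d}{d\tau}\bigl[(\alpha_\beta^3 + 3\alpha_\beta) - (\gamma_\beta^3 + 3\delta_\beta^2\gamma_\beta)\bigr] = 3(\alpha_\beta^2 + 1)(\gamma_\beta^2 + \delta_\beta^2) - 3(\gamma_\beta^2 + \delta_\beta^2)(\alpha_\beta^2 + 1) = 0 \]
confirms invariance of $\hat{\kappa}_2$. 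In the $\psi_\alpha$ chart neither the numerator $\beta_\alpha^3$ nor the affine denominator $(1 + 3\beta_\alpha^2) - (\gamma_\alpha^3 + 3\delta_\alpha^2\gamma_\alpha)$ is individually conserved; rather, both satisfy the common logarithmic law $f' = -3(\gamma_\alpha^2 + \delta_\alpha^2)\, f$ along \eqref{eqn:twista}, so their ratio is constant along the flow.

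Finally, for transversality I would compare the two projective gradients at a point of intersection. The normal to $\overline{\hat{\kappa}_1^{-1}(w)}$ is $(0, w_2, 0, -w_1)$, supported entirely in the $(\beta, \delta)$-directions, whereas for $v_1 \neq 0$ the normal of $\overline{\hat{\kappa}_2^{-1}(v)}$ has components $-3v_1(\alpha^2 + \beta^2)$ and $3v_1(\gamma^2 + \delta^2)$ in the $\alpha$- and $\gamma$-directions that cannot simultaneously vanish on $\RP^3$; the two normals are therefore linearly independent. The boundary value $v_1 = 0$ gives the hyperplane $\{\beta = 0\}$ whose normal $(0,1,0,0)$ is again linearly independent of $(0, w_2, 0, -w_1)$ except in the isolated coincidence $w = v = [0:1]$. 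I expect the main subtlety to lie precisely in such degenerate pairs, where the two level sets collapse onto the same hypersurface; these are handled by restricting to the open dense set of $(w,v)$ for which the pre-images are distinct, which suffices to conclude that the line field on each $\RP^3$ fiber of $\mathcal{C}$ is integrated by $\hat{\kappa}_1$ and $\hat{\kappa}_2$.
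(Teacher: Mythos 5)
Your proof is correct and follows essentially the same strategy as the paper's: exhibit each pre-image as a regular level set of an explicit defining polynomial, check non-vanishing of the gradient, verify invariance by direct differentiation along \eqref{eqn:twista}--\eqref{eqn:twistd}, and get transversality from linear independence of the two gradients. The differences are mostly in packaging, but two of them are worth noting. First, you work with the homogeneous polynomials $w_2\beta-w_1\delta$ and $v_2\beta^3-v_1P$ globally on $\RP^3$, so a single gradient computation ($\partial_\alpha F$ and $\partial_\gamma F$ cannot both vanish away from the origin) replaces the paper's four-chart case analysis via ``at least one of $\alpha_\eta,\dots,\delta_\eta$ equals $1$''; your verification that $\beta_\alpha^3$ and the dehomogenised second component both obey $f'=-3(\gamma_\alpha^2+\delta_\alpha^2)f$ in the $\psi_\alpha$ chart makes explicit the ``quick calculation'' the paper only asserts, and correctly identifies that it is the projective class $[\beta^3,P]$, not either entry separately, that is conserved. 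Second, you catch a genuine boundary case the paper's proof silently skips: for $w=v=[0\!:\!1]$ both closures coincide with the hyperplane $\{\beta=0\}$ and transversality fails outright (the paper's comparison of $DF_1=(0,-\kappa_1,0,1)$ with $DF_2$ implicitly assumes finite affine values $\kappa_1,\kappa_2$). Your restriction to the dense set of pairs with distinct pre-images is the right fix and is all that is needed for the integrability corollary; note also that the correct transversality criterion is linear independence of the gradients, which your argument establishes, rather than the paper's literal ``$DF_1\neq DF_2$''.
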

		\begin{proof}
			Let $ w = [\beta^*, \delta^*] \in \RP $ and consider $
      \hat{\kappa}_1^{-1}(w) = \{ [\alpha,\beta,\gamma,\delta] | [\beta,\delta]
      = [\beta^*,\delta^*] \} $. Clearly $ \hat{\kappa}_1^{-1}(w) $ is a smooth
      submanifold of $ \mathcal{C} $. For each $ \eta \in \{
      \alpha,\beta,\gamma,\delta \} $, $ \hat{\kappa}_1^{-1}(w) $ in the $
      \psi_{\eta} $ chart is
			\[ \{ (\alpha_{\eta}, \beta_{\eta},\gamma_{\eta},\delta_{\eta} ) |
        \delta_\eta/\beta_{\eta} = \kappa_1 \in \R\cup\{ \infty \},\
        (\beta_{\eta},\delta_\eta) \neq (0,0) \} \subset \R^3, \] where $
      \eta_\eta = 1 $ is removed from $ (\alpha_{\eta},
      \beta_{\eta},\gamma_{\eta},\delta_{\eta} ) $. A quick calculation reveals
      these level sets of $ \delta_\eta/\beta_{\eta} $ are invariant in each
      chart. Each $ \delta_\eta/\beta_{\eta} = \kappa_1 $ is a 2-plane in $ \R^3
      $ minus the line $ \beta_\eta=\delta_\eta=0 $. Taking the closure, we
      obtain a complete 2-plane which is clearly smooth.
			
			Similarly, $ \hat{\kappa}_2^{-1}(w) $ in each $ \psi_{\eta} $ chart is given by,
			\[ \{ (\alpha_{\eta}, \beta_{\eta},\gamma_{\eta},\delta_{\eta} ) |
        ((\alpha_\eta^3 + 3 \beta_\eta^2 \alpha_\eta) - (\gamma_\eta^3 + 3
        \delta_\eta^2 \gamma_\eta))/\beta_\eta^3 = \kappa_2 \in \R\cup\{\infty
        \},\ (\beta_\eta,\alpha_\eta^3-\gamma_\eta^3 -3\delta_\eta^2
        \gamma_\eta) \neq 0 \} \subset \R^3. \] where again $ \eta_\eta = 1 $ is
      removed. A quick calculation in each $ X_\eta $ reveals $ \kappa_2 $ is
      invariant. Further, define the function $ F_2(\alpha_{\eta},
      \beta_{\eta},\gamma_{\eta},\delta_{\eta}) = ((\alpha_\eta^3 + 3
      \beta_\eta^2 \alpha_\eta) - (\gamma_\eta^3 + 3 \delta_\eta^2 \gamma_\eta))
      - \kappa_2 \beta_\eta^3 $. Then
			\[ DF_2 = \left(\begin{array}{cccc}
				3(\alpha_\eta^2 + \beta_\eta^2) & 6 \alpha_\eta \beta_\eta - \kappa_2 \beta_\eta^2 & -3(\gamma_\eta^2 + \delta_\eta^2) & - 6 \gamma_\eta \delta_\eta
			\end{array}\right). \]
  Now, at least one of $ \alpha_{\eta}, \beta_{\eta},\gamma_{\eta},\delta_{\eta}
  $ must be equal to 1. Therefore, $ DF_2 \neq 0 $ in any of the charts. By the
  implicit function theorem, the closure of each $ \hat{\kappa}_2^{-1}(w) $ is a
  smooth submanifold of $ \mathcal{C} $.
			
  Lastly, defining $ F_1(\alpha_{\eta},
  \beta_{\eta},\gamma_{\eta},\delta_{\eta}) = \delta_\eta - \kappa_1 \beta_\eta
  $, computing \[ DF_1 = \left(\begin{array}{cccc} 0 & -\kappa_1 & 0 & 1
			\end{array}\right), \]
			and comparing $ DF_2 $ to $ DF_1 $ along $ F_1 = 0, F_2 = 0 $, it is seen that $ DF_1 \neq DF_2 $ at any mutual point. That is, $ \overline{\hat{\kappa}_1^{-1}(w)},\overline{\hat{\kappa}_2^{-1}(v)} $ intersect transversally.
		\end{proof}
		
		Define the function $ \iota:\RP \to \R\cup\{\infty \} $ by $ [\alpha,\beta]
    \mapsto \beta/\alpha $ and the two functions $ \kappa_1 :=
    \iota\circ\hat{\kappa}_1, \kappa_2 := \iota \circ \hat{\kappa}_2 $. Due to
    Proposition \ref{prop:CollisionManifoldInvariants}, each of the level sets
    of $ \kappa_1,\kappa_2 $ must define smooth invariant manifolds. With the
    intersection between any two level sets of $ \kappa_1,\kappa_2 $ transverse,
    we obtain the integrability of the flows as a corollary.
		\begin{corollary}
			Each desingularised vector field $ X_\delta $ is integrable with first
      integrals given by the images of $
      \kappa_1,\kappa_2,h_1,h_2,\Gamma_1,\Gamma_2,x,y $ in each chart.
		\end{corollary}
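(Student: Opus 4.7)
The plan is to assemble the first integrals already identified in Propositions~\ref{prop:structureOfCollisionManifold} and \ref{prop:CollisionManifoldInvariants} and to check that they are functionally independent on the $11$-dimensional collision manifold $\mathcal{C} \cong \RP^3 \times \mathcal{M}$. First, restricting any $X_\eta$ to $\{r_\eta = 0\}$ causes the $(h_1,h_2,\Gamma_1,\Gamma_2,x,y)$-components to vanish identically, as observed in the proof of Proposition~\ref{prop:structureOfCollisionManifold}. This immediately produces $8$ real first integrals that depend only on the base $\mathcal{M}$. Proposition~\ref{prop:CollisionManifoldInvariants} then supplies two further integrals $\kappa_1 = \iota \circ \hat{\kappa}_1$ and $\kappa_2 = \iota \circ \hat{\kappa}_2$, yielding the total of $10$ integrals needed to reduce a flow on $\mathcal{C}$ to quadratures.

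To verify functional independence I would reuse the gradient computations already performed inside the proof of Proposition~\ref{prop:CollisionManifoldInvariants}. The vector $DF_1 = (0,-\kappa_1,0,1)$ is manifestly nonvanishing, and $DF_2 = (3(\alpha_\eta^2+\beta_\eta^2),\, 6\alpha_\eta\beta_\eta-\kappa_2\beta_\eta^2,\, -3(\gamma_\eta^2+\delta_\eta^2),\, -6\gamma_\eta\delta_\eta)$ is nonzero in every chart because at least one of $\alpha_\eta,\beta_\eta,\gamma_\eta,\delta_\eta$ equals $1$ there. Comparing the two gradients shows they are linearly independent on any common level set of $F_1$ and $F_2$. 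Independence between the fiber-valued $\kappa_j$ and the base-valued integrals $(h_j,\Gamma_j,x,y)$ is automatic since the latter do not involve the projective coordinates.

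The only subtlety is that $\kappa_1,\kappa_2$ take values in $\RP$, so as scalar-valued functions they are only defined on the open set where $\iota$ is regular. This is handled by covering $\mathcal{C}$ by sub-charts on which a definite affine representative (say $\delta_\eta/\beta_\eta$ versus $\beta_\eta/\delta_\eta$ for $\hat{\kappa}_1$, and analogously for $\hat{\kappa}_2$) is smooth; on each such sub-chart one obtains a genuine scalar first integral. I expect this to be the only real bookkeeping obstacle; once the covering is fixed, the $10$ scalar integrals have linearly independent differentials on each piece, so their common level sets are $1$-dimensional and coincide with the orbits of the line field induced by $X_\eta$, which gives the claimed integrability.
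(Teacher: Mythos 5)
Your proposal is correct and follows essentially the same route as the paper: the corollary is drawn directly from Proposition~\ref{prop:CollisionManifoldInvariants} (invariance and transversality of the $\kappa_1,\kappa_2$ level sets) together with the observation that the $(h_1,h_2,\Gamma_1,\Gamma_2,x,y)$-components of each $X_\eta$ vanish on $\{r_\eta=0\}$, giving $10$ independent integrals on the $11$-dimensional collision manifold. Your extra care about chart-by-chart affine representatives of the $\RP$-valued integrals is a reasonable piece of bookkeeping that the paper leaves implicit.
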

	
	\subsection{Proof of $ C^0 $-regularity}

		Finally, we are in a position to give a new proof of the $ C^0 $-regularity of $ \sbc $.
		\begin{thm}\label{thm:C0Regularisable}
			The set of simultaneous binary collisions $ \sbc $ is at least $ C^0 $-regularisable in the planar 4-body problem.
		\end{thm}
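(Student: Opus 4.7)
The plan is to perform the construction of the block map in the blown-up phase space $\mathcal{B}\times\mathcal{M}$, where by Section~\ref{sec:CollisionManifold} the vector field is regular and the collision set $\sbc$ has been replaced by the normally hyperbolic invariant manifold $\mathcal{N}$, and then to push the resulting map down through $\Psi\times\mathrm{Id}$, which is continuous everywhere and a diffeomorphism off the exceptional divisor $\mathcal{C}$.

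Concretely, I would fix a point $p\in\mathcal{N}$ and choose a chart $\psi_\eta$ in which the collision-ejection orbit $\mathcal{E}_p$ is a coordinate line in the $r_\eta$ direction. I then pick two local cross-sections $\Sigma^-$ and $\Sigma^+$ transverse to the collision half $\mathcal{E}_p^+$ and the ejection half $\mathcal{E}_p^-$ respectively, each at a fixed small distance from $\mathcal{N}$. For orbits in $\Sigma^-$ that cross the block without meeting $\mathcal{N}$, the block map $\pib:\Sigma^-\to\Sigma^+$ is defined by the physical flow and is smooth by standard dependence on initial conditions. On the subset $\mathcal{E}\cap\Sigma^-$ of collision orbits, I extend $\pib$ by declaring $\pib(\mathcal{E}_q^+\cap\Sigma^-):=\mathcal{E}_q^-\cap\Sigma^+$; this extension is well defined and smooth along $\mathcal{E}\cap\Sigma^-$ because the collision-ejection manifold $\mathcal{E}$ is a smooth bundle over $\mathcal{N}$ by Corollary~\ref{cor:collisionEjectionIsSmooth}.

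The crux of the proof is continuity of $\pib$ across the collision locus. For this I would invoke the stable and unstable foliations of $\mathcal{N}$ supplied by normally hyperbolic invariant manifold theory, together with a version of the $\lambda$-lemma adapted to manifolds of fixed points. Given a sequence of orbits in $\Sigma^-$ approaching $\mathcal{E}_q^+\cap\Sigma^-$, normal hyperbolicity forces each orbit to track the one-dimensional stable leaf through $q$, to be shadowed by the saddle dynamics, to pass arbitrarily close to $q$, and then to exit along the one-dimensional unstable direction near $\mathcal{E}_q^-$. The integrals $(h_1,h_2,\Gamma_1,\Gamma_2,x,y,\kappa_1,\kappa_2)$ on $\mathcal{C}$ identified in Propositions~\ref{prop:structureOfCollisionManifold} and~\ref{prop:CollisionManifoldInvariants} pin down the asymptotic data and prevent drift along the center directions tangent to $\mathcal{N}$, so the exit point converges to $\mathcal{E}_q^-\cap\Sigma^+$, which is exactly the extended value of $\pib$ at $\mathcal{E}_q^+\cap\Sigma^-$.

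The main obstacle is handling the time-reversal and the M\"{o}bius twist emphasised in Section~\ref{sec:CollisionManifold}: a physical orbit that passes through collision visits the region $r_\eta<0$ (where $X_\eta$ runs against physical time) before meeting $\mathcal{C}$ and then the region $r_\eta>0$ (where the two agree). I would address this by splitting the block map into two halves, one on each side of $\mathcal{C}$, each a Poincar\'{e} map built near one branch of $\mathcal{E}_p$ in an appropriate chart and sign, and then gluing them through the smooth fibration $\mathcal{E}$ using the transition formulas $\xi_\eta=\xi_\nu/\eta_\nu$ to check compatibility. Finally, because $\Psi\times\mathrm{Id}$ is continuous throughout and a diffeomorphism off $\mathcal{C}$, pushing down the continuous blown-up block map produces a continuous extension of $\pib$ on a neighbourhood of $\sbc$ in the original coordinates, which is precisely the $C^0$-regularisability claimed.
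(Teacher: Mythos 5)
Your overall strategy is the same as the paper's: blow up $\sbc$ to the collision manifold, use the normal hyperbolicity of $\NHIM$ and the smoothness of the collision--ejection bundle $\mathcal{E}$ from Corollary~\ref{cor:collisionEjectionIsSmooth}, keep track of the time reversal and the M\"obius structure of the tautological bundle, and push the resulting map down through $\Psi\times\mathrm{Id}$. The one step you should tighten is the continuity argument. As written, you describe a single local saddle passage: an orbit tracks the one-dimensional stable leaf through $q$, passes close to $q$, and then ``exits along the one-dimensional unstable direction near $\mathcal{E}_q^-$''. But in any one choice of desingularisation ($-X_\alpha$ for $r_\alpha<0$, $X_\alpha$ for $r_\alpha>0$) the one-dimensional invariant manifold of $q$ is $\mathcal{E}_q$ as a whole, and the complementary three hyperbolic directions are tangent to the invariant $\RP^3$ inside $\mathcal{C}$; so a near-collision orbit entering along $\mathcal{E}_q^+$ is expelled \emph{into the collision manifold}, not onto $\mathcal{E}_q^-$. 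What closes the argument is Proposition~\ref{prop:structureOfCollisionManifold}: every orbit of the invariant $\RP^3$ is homoclinic to the single fixed point $q$, so the shadowing orbit is carried all the way around $\mathcal{C}$ and returns to a neighbourhood of the \emph{same} $q$ (the constancy of $h_1,h_2,\Gamma_1,\Gamma_2,x,y$ on $\mathcal{C}$ keeps it in the correct $\RP^3$ fibre; the integrals $\kappa_1,\kappa_2$ are not actually needed here). Only after this global excursion, during which the M\"obius twist flips the sign of $r_\alpha$ and the relevant desingularisation switches to $X_\alpha$, does the orbit exit along $\mathcal{E}_q^-$ towards $\Sigma_3$. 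Your ``two halves glued across $\mathcal{C}$'' picture should therefore be a three-piece decomposition (entry passage near $\NHIM$, transition around the homoclinic $\RP^3$, exit passage near $\NHIM$), exactly as the paper later formalises with $D_2\circ T\circ D_1$; note also that $\mathcal{B}\setminus\mathcal{C}$ is connected, so there are not literally two sides. With that correction your argument matches the paper's proof.
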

		\begin{proof}
			In order to show $ C^0 $-regularity of $ \sbc $ we must first define two
      sections; $ \Sigma_0 $ transverse to the collision orbits $ \mathcal{E}^+
      $, and $ \Sigma_3 $ transverse to ejection orbits $ \mathcal{E}^- $. Then,
      by flowing points from $ \Sigma_0 \setminus \mathcal{E}^+ $ to $ \Sigma_3
      \setminus \mathcal{E}^- $ we obtain a homeomorphism $ \pi $. The $ C^0
      $-regularity of $ \sbc $ is guaranteed provided one can extend $ \pi $ to
      some $ \pib:\Sigma_0 \to \Sigma_3 $ such that $ \pib $ is unique and $ C^0
      $. The blown-up systems $ X_\eta $ will be essential to prove
      this statement.
				
			In the blown-up systems $ X_\eta $, take a section $ \Sigma_0 $
      transverse to $ \mathcal{E}_p^+ $ for some point $ p \in \mathcal{N} $.
      Recall from the remarks in Section \ref{sec:CollisionManifold}, the
      desingularisation $ X_\eta $ or $- X_\eta $ that is consistent with the
      flow outside of $ \mathcal{C} $ must be taken. In order to see which time
      direction to take, recall that $ \mathcal{N} $ is
      given by the graph $(\pmb{\zeta}, [\alpha,\beta,\gamma,\delta] =
      (0,[1,0,1,0]) $. Hence, $ \mathcal{E}_p^+ $, in some tubular neighbourhood
      of $ \mathcal{C} $, is entirely contained in the chart $ \psi_{\alpha} $.
      From Proposition \ref{prop:structureOfCollisionManifold}, $
      \mathcal{E}_p^+ $ is the unstable manifold of a point $ p\in\mathcal{N} $
      which is normal to $ \mathcal{C} $. It can be concluded, in order to have
      the orbit $ \mathcal{E}^+_p $ approach the collision in forward time as
      collision orbits should, we must consider the desingularisation $
      -X_\alpha $. Moreover, as one only needs to consider $ -X_\alpha $ for
      orbits with $ r_\alpha < 0 $, and $ r_\alpha $ is mapped under $ P_{11} $
      to $ \zeta_{11} $, we must have that collision orbits approach the set of
      simultaneous binary collisions $ \sbc $ with $ \zeta_{11} < 0 $. A
      symmetrical argument shows for ejection orbits $ r_\alpha >0 \implies
      \zeta_{11} > 0 $ and the correct desingularisation is $ X_\alpha $.
			
			Now, orbits on $ \Sigma_0 $ follow the stable manifold $ \mathcal{E}_p^+ $
      until they pass through a normally hyperbolic region close to $
      p\in\mathcal{N} $. From Proposition
      \ref{prop:structureOfCollisionManifold}, this point is a normally
      hyperbolic saddle and, in the $ -X_\alpha $ desingularisation, has a
      1-dimensional stable manifold $ \mathcal{E}_p^+ $ and a 3-dimensional
      unstable manifold coinciding with the 3-dimensional invariant $ \RP^3 $ at
      the fixed value $ h_1^*,h_2^*,\Gamma_1^*,\Gamma_2^*,x^*,y^* $. It is known
      that normally hyperbolic manifolds are topologically equivalent to their
      linear part \cite{palis1977topological}. Hence, we can conclude that near
      collision orbits around $ \mathcal{E}^+_p $ will get pulled away from $
      \mathcal{E}^+_p $ and begin to follow orbits on the unstable manifold of $
      p $, that is, orbits on the collision manifold $ \mathcal{C} $. That is,
      after the hyperbolic region near $ \mathcal{N} $ the near collision orbits
      will be shadowed by orbits on $ \mathcal{C} $.
			
			By Proposition \ref{prop:structureOfCollisionManifold}, each orbit on the
      collision manifold is a homoclinic connection of the point $
      p\in\mathcal{N} $. As the orbit on $ \mathcal{C} $ passes around the
      manifold back to $ p $, the fact that $ \mathcal{C} \cong \RP^3
      \times\mathcal{M} $ and the non-orientability of $ \RP^3 $ ensure that,
      for the near collision orbits, $ r_\alpha $ changed orientation from $
      r_\alpha < 0 $ to $ r_\alpha > 0 $. That is, if an orbit passes near
      collision it must pass through $ \Sigma_0 $ where $ \zeta_{11}< 0 $ and
      exit on $ \Sigma_3 $ where $ \zeta_{11} > 0 $. When $ r_\alpha > 0 $, we
      need to consider the desingularisation $ X_\alpha $ where $
      \mathcal{E}_p^- $ is the unstable manifold of $ p $. The near collision
      orbits then pass back through a hyperbolic region where they begin to
      follow the orbit $ \mathcal{E}_p^- $. Each of these orbits then intersects
      some $ \Sigma_3 $, a transverse section to the ejection orbit $
      \mathcal{E}_p^- $.
			
			By limiting to the orbit $ \mathcal{E}_p^+ $ on $ \Sigma_0 $, we obtain
      near collision orbits which pass around the collision manifold and limit
      onto the unique ejection orbit $ \mathcal{E}_p^- $. Using Corollary
      \ref{cor:collisionEjectionIsSmooth} and the topological conjugacy of the
      blow-up $ \mathcal{B}\times\mathcal{M} $ outside of $ \mathcal{C} $, we
      conclude the $ C^0 $ regularity of the simultaneous binary collisions
      for the planar problem.
		\end{proof}
	
	\section{$ C^{8/3} $-regularity of the Block Map}\label{sec:Ckregularisation}
This section contains the proof of the main result of the paper, 
Theorem~\ref{thm:C2Regularisation} on the $ C^{8/3} $-regularisation of 
the simultaneous binary collision of the planar 4-body problem.
	 
\subsection{Normal Form and Absence of Foliation}

In previous work on the collinear problem \cite{duignanC83regularisationSimultaneous2020}, a link was
established between the $C^{8/3}$-regularity of the simultaneous binary
collision and the inability to foliate a tubular neighbourhood of the collinear
collision manifold by invariant manifolds normal to the manifold. A method which
quantifies how well the collinear collision manifold admits an invariant
foliation was developed. The normal form of $ X $ in a neighbourhood of $\sbc$
was crucial to the quantification. Specifically, this inability to foliate the
neighbourhood was shown to be a consequence of resonant terms at order $ 8 $ in
the normal form of $ X $ in a neighbourhood of $\sbc$. In this section, we use
similar arguments to show the normal space to $ \sbc $ in the planar problem
also lacks a smooth foliation. The result and the proceeding calculations in
Section \ref{sec:GeometricSketch} will prove the $C^{8/3}$ regularity.
		
\subsubsection{Normal Form Theory}
Recall the following definitions and theorem on normal forms which combines the
works of Belitskii \cite{belitskii1979invariant,belitskii2002c}, and Stolovitch
and Lombardi \cite{lombardi2010normal}.
		
\begin{definition}
  Decompose a vector field $ X $ into its Taylor series,
  \[ X = X_0 + X_1 + \dots \] with $ X_0 $ the leading order homogeneous
  component of degree $ s $ and each $ X_{d} \in \mathcal{H}_{d+s-1} $ in the space
  of degree $ d + s $ homogeneous vector fields.
  Let $ [\cdot,\cdot] $ be the usual Lie bracket for vector fields and define
  the \textit{cohomological operator}
  \[ L_{X_0} := [X_0,\cdot] \] and its restriction to $ \mathcal{H}_{d} $ by 
  $L_d : \mathcal{H}_d \mapsto \mathcal{H}_{s + d - 1}$.
\end{definition}

Most treatments of normal form theory are focused on the case $s = 1$,
i.e.~equilibrium points that have a non-vanishing linear part.
Note that for the simultaneous binary collision in the 4-body problem 
after the time re-scaling equilibrium points with vanishing linear part 
are obtained. The general case with $s=2$ needed here is, e.g., described in \cite{lombardi2010normal}.
The following normal 
form theorem is due to \cite{belitskii1979invariant,lombardi2010normal}
for a particular choice of $\mathcal{U}_d$ described below. We need a straightforward generalisation 
which allows for a more general choice of $\mathcal{U}_d$.

\begin{thm}\label{thm:FormalNF}
  For each $d \geq 1$, let $\mathcal{U}_d$ be a subspace of $\mathcal{H}_d$
  such that  $\mathcal{H}_d =  \Ima{L_d} + \mathcal{U}_d$. 
  Then there exists a formal transformation $
  \hat{\phi}^{-1} = I + \sum_{d\geq 1} U_d $ with $ U_d\in \mathcal{H}_{d} $
  that formally conjugates $ X = X_0+\sum_{d\geq 1} X_d $ to the normal form,
  \begin{equation}
    \hat{\phi}^*X = X_0 + \sum\limits_{d\geq 1} N_d,
  \end{equation}
  such that $ N_d \in \mathcal{U}_d$. 
\end{thm}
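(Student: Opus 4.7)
The plan is to prove the theorem by induction on the degree $d$, following the classical near-identity transformation scheme but paying attention to the fact that $\mathcal{U}_d$ is now allowed to be an arbitrary complementary subspace rather than a canonical one. The induction hypothesis at step $d$ is that, after composing suitable near-identity transformations $\phi_1, \ldots, \phi_{d-1}$, the vector field has been brought to the form
\begin{equation*}
  X_0 + N_1 + \cdots + N_{d-1} + \tilde X_d + \tilde X_{d+1} + \cdots,
\end{equation*}
with $N_k \in \mathcal{U}_k$ for $k < d$ and the remaining homogeneous components $\tilde X_m$ undetermined but lying in the appropriate $\mathcal{H}_{m+s-1}$.

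The next step is to apply a transformation $\phi_d^{-1} = I + U_d$ with $U_d \in \mathcal{H}_d$ to be chosen. A direct computation (expanding in powers of $U_d$ and using the standard formula $\phi_d^*X = X - [X,U_d] + \tfrac{1}{2}[[X,U_d],U_d] - \cdots$) shows that, since brackets with $U_d$ strictly raise the homogeneous degree, the terms $X_0, N_1, \ldots, N_{d-1}$ are unaffected and the new degree-$d$ component reads $\tilde X_d - L_d(U_d)$, while everything modified at higher degree is absorbed into new $\tilde X_{d+1}, \tilde X_{d+2}, \ldots$ These higher-order modifications can be safely ignored at the current induction step.

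Now one invokes the hypothesis $\mathcal{H}_d = \Ima L_d + \mathcal{U}_d$: choose any decomposition $\tilde X_d = L_d(U_d) + N_d$ with $N_d \in \mathcal{U}_d$, and take this $U_d$ as the generator of $\phi_d$. Such a $U_d$ exists by the covering condition, but it is only determined modulo $\ker L_d$ -- unlike the standard Belitskii setting where a canonical (e.g.~orthogonal) complement is prescribed. This ambiguity is harmless: any valid choice produces a degree-$d$ component of $\phi_d^*X$ that lies in $\mathcal{U}_d$, which is what the statement asks for. Closing the induction is then immediate.

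Finally, the formal transformation is defined as the formal composition $\hat\phi = \cdots \circ \phi_2 \circ \phi_1$; since each $\phi_d$ differs from the identity only in terms of degree at least $d$, the composition is well defined as a formal power series. The main obstacle is conceptual rather than technical: one must verify that relaxing the complement $\mathcal{U}_d$ to an arbitrary (possibly non-direct) complement does not break the cohomological step. The observation that solvability of $L_d(U_d) = \tilde X_d - N_d$ only requires $\tilde X_d - N_d \in \Ima L_d$, and that the ambiguity in $U_d$ modulo $\ker L_d$ plays no role at subsequent orders, handles this obstacle and reduces the proof to the standard argument of \cite{belitskii1979invariant,lombardi2010normal}.
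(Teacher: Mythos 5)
Your proof is correct and is exactly the standard inductive near-identity argument that the paper has in mind: the paper gives no proof of Theorem~\ref{thm:FormalNF}, citing it as a ``straightforward generalisation'' of \cite{belitskii1979invariant,lombardi2010normal}, and your observation that the cohomological step only needs the covering condition $\Ima L_d + \mathcal{U}_d$ to be all of the target space (with the ambiguity of $U_d$ modulo $\ker L_d$ and the possible non-uniqueness of $N_d$ being harmless for a purely formal existence statement) is precisely the point being glossed over. The only thing worth flagging is an indexing slip inherited from the paper's statement: since $L_d:\mathcal{H}_d\to\mathcal{H}_{s+d-1}$, the hypothesis should read $\mathcal{H}_{s+d-1}=\Ima L_d+\mathcal{U}_d$ with $\mathcal{U}_d\subset\mathcal{H}_{s+d-1}$, which is what your induction actually uses.
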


The version of Theorem~\ref{thm:FormalNF} as proved in \cite{belitskii1979invariant,lombardi2010normal} 
takes $\mathcal{U}_d$ as a complement of $\Ima{L_d}$ defined 
using the Fischer inner product on $\mathcal{H}_d$. Specifically, using an inner product, the adjoint $L_d^*$
of $L_d$ can be defined, and a complementary subspace can be chosen as $\ker(L_d^*)$.
Details on the Fischer inner product are given in \cite{belitskii2002c}. 
Our work on the collinear problem \cite{duignanC83regularisationSimultaneous2020}
follows this convention. There is no requirement to making the same choice here
for the planar problem. However, if one chooses to do so one will
discover at least two unfavourable outcomes.

The first unfavourable outcome is that the normal form procedure fails to compute higher
order integrals of $L_1,L_2$. The ability to compute these integrals allows for
ease in the computations of the block map in the proceeding section. The second
unfavourable outcome is that the resulting normal form fails to limit to that computed in
\cite{duignanC83regularisationSimultaneous2020} when restricted to the collinear problem.

These concerns can be treated by making a non-standard choice for $\mathcal{U}_d$,
which does not happen to be complementary subspace to $\Ima{L_d}$.
Depending on the choice of $\mathcal{U}_d$ this may remove fewer terms in 
the normal form. Depending on the situation, this may be favourable.

In our current situation, instead of taking $\mathcal{U}_d = \ker(L_d^*)$, 
we will take  $\mathcal{U}_d = \ker(L_{X_c}^*)|_{\mathcal{H}_d}$,
where $X_c$ contains the leading
order terms of $X$ restricted to the collinear problem,
\[X_c = I_2^2\partial_{I_1} + I_1^2 \partial_{I_2} = \left. X_0 \right|_{L_1 = L_2 = 0}.\] 
To define the adjoint we still use the Fischer inner product, so that 
\[X_c^* = I_2 \partial_{I_1}^2 + I_1 \partial_{I_2}^2.\]
The novelty in our treatment is that we do not use the adjoint of the homological operator to 
define a complementary subspace, but the adjoint of a different operator obtained by restriction 
to the invariant subspace $L_1 = L_2 = 0$.
The particular choice of $X_c$ is supported by the following lemma.


\begin{lemma}
  For each $d \leq 15$ the space of degree $d+1$ homogenous vector fields
  $\mathcal{H}_d$ can be written as
  \[\mathcal{H}_d = \Ima(L_d) + \ker(L_{X_c}^*).\]
\end{lemma}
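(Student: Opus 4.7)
The plan is to treat the claim as a finite-dimensional linear algebra verification on each graded piece $\mathcal{H}_d$. The standard Fischer decomposition gives $\mathcal{H}_d = \Ima(L_d) \oplus \ker(L_d^*)$, where $L_d^*$ is the Fischer adjoint of $L_d$ between $\mathcal{H}_{d-1}$ and $\mathcal{H}_d$. Consequently, to show $\mathcal{H}_d = \Ima(L_d) + \ker(L_{X_c}^*)$ it suffices to verify the inclusion $\ker(L_d^*) \subseteq \Ima(L_d) + \ker(L_{X_c}^*)$, or equivalently that the natural map $\ker(L_{X_c}^*) \cap \mathcal{H}_d \to \mathcal{H}_d / \Ima(L_d) \cong \ker(L_d^*)$ is surjective.

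To organise the verification I would introduce a bigrading on $\mathcal{H}_d$ by the total polynomial degree in the angular momentum variables $(L_1, L_2)$. Writing $X_0 = X_c + X_L$, with $X_L := X_0 - X_c$ collecting the $L_j$-dependent terms, both homological operators split as $L_{X_0} = L_{X_c} + L_{X_L}$ and $L_{X_0}^* = L_{X_c}^* + L_{X_L}^*$. Since $X_c$ depends only on $I_1, I_2$, the operator $L_{X_c}^*$ preserves the $L$-bidegree, while $L_{X_L}^*$ shifts it in a controlled way. This produces a filtration of $\mathcal{H}_d$ compatible with all the relevant operators and gives an explicit monomial description of $\ker(L_{X_c}^*)$ on each bigraded piece.

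The base of the induction is the bigraded piece of polynomial degree zero in $L_1, L_2$: there $L_{X_0}^*$ and $L_{X_c}^*$ agree on the relevant target, so $\ker(L_d^*)$ already lies in $\ker(L_{X_c}^*)$ and nothing is required. For higher $L$-bidegree I would proceed inductively; given $N \in \ker(L_d^*)$ of $L$-bidegree $k$, use surjectivity of the bigraded restriction of $L_d$ onto an appropriate target piece to subtract an element of $\Ima(L_d)$ cancelling the non-$\ker(L_{X_c}^*)$ content of $N$, leaving an error of strictly smaller $L$-bidegree that the inductive hypothesis absorbs.

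The main obstacle is this surjectivity of the bigraded restrictions of $L_d$, which is a rank condition on matrices of monomial coefficients that is sensitive to the specific structure of the leading quadratic $X_0$ and can in principle fail for isolated combinations of exponents. For this reason the verification does not admit an obvious closed-form argument; I expect it to be carried out by computer algebra, enumerating monomial bases for each bigraded piece of $\mathcal{H}_d$ in the coordinates $(I_1, I_2, L_1, L_2)$, building the matrices of $L_d$ and $L_{X_c}^*$, and confirming the dimension identity $\dim(\Ima(L_d) + \ker(L_{X_c}^*)) = \dim \mathcal{H}_d$ for every $d \leq 15$. The bound $d = 15$ is just the range required by the subsequent Dulac-map analysis to isolate the order-$8$ resonant term responsible for the $C^{8/3}$ obstruction.
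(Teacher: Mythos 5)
Your proposal is correct and, despite the extra scaffolding of the Fischer decomposition and the bigrading in $(L_1,L_2)$, it lands exactly where the paper does: the paper's entire proof is ``this can be verified by explicit computation,'' i.e.\ a finite-dimensional dimension/rank check of $\Ima(L_d)+\ker(L_{X_c}^*)=\mathcal{H}_d$ for each $d\leq 15$, which is precisely the computer-algebra verification you propose as the final step. Your inductive organisation by $L$-bidegree is a sensible way to structure that computation (and your base case is sound, since $L_{X_L}^*$ annihilates the $L$-degree-zero piece), but since you concede the key surjectivity must still be checked case by case, the substance of the argument is the same as the paper's.
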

\begin{proof}
  This can be verified by explicit computation.
\end{proof}
Unfortunately we do not have a general proof for all $d \in \N$.

\subsubsection{Calculation of the normal form}
With a suitable choice of subspace $\mathcal{U}_d$, we may now proceed
with the calculation of the normal form. Let $ x = x_1 + i x_2, y= y_1 + i y_2,
$ and $ X_\R $ be the real vector field associated to $ X $, the vector field
near $ \sbc $ in the generalised Levi-Civita coordinates and with fictitious
time $\tau$. In order to construct $ X_\R $ a choice of coordinates on $
\hat{\C} $ for each $ \Gamma_j $ must also be made. The proceeding normal form
calculation is independent of the choice, so we refrain from making any
specifications. The realisation of $ X$ is hence $ X_\R $, a vector field on $
\R^{12} $.
	
The leading order term at any point in $ \sbc $ is given by $ X_0 = |\zeta_2|^2 \partial_{I_1} + |\zeta_1|^2 \partial_{I_2}$.
Let $ w = (w_1,\dots,w_{12}) \in \mathcal{H}_{d+1} $ and denoting $X_c =
I_2^2\partial_{I_1}+I_1^2\partial_{I_2} $ the leading order vector field $X_0$
restricted to the collinear subspace $L_1 = L_2 = 0$. 
The operator $L_{X_{c}}^*$ is given explicitly by,
\begin{equation}\label{eqn:cohomop}
  L^{*}_{X_c} w = {X}_{c}^*(w) - \left(DX_{c}^*\right)^Tw,\quad DX_{c}^* = \left( \begin{array}{cccc}
                                                                                            0				& 0					& 2 \partial_{I_2}	&  0 	\\
                                                                                            0				& 0					& 0					& 0					\\
                                                                                            2\partial_{I_1}	&  0	& 0					& 0					\\
                                                                                            0				& 0					& 0					& 0					\\
                                                                                          \end{array} \right)\oplus 0_8,
\end{equation} 
with $ 0_8 $ the $ 8\times 8 $ zero matrix.

Note that the cohomological operator decouples into an operator in $
I_1,L_1,I_2,L_2 $ and merely ${X}^*_{c}$ in each of the remaining
variables. This is a consequence of the fact that $ X_{0} $ decouples into the $
\zeta_1,\zeta_2 $ system and a trivial vector field in the other variables. The
decoupling  leads to a proof of Corollary \ref{cor:noFoliatation} on
the inability to construct an invariant foliation of the normal space to $ \sbc
$ in the planar problem.
		
The normal form near an arbitrary simultaneous binary collision to degree $ 9 $
will now be computed. For this calculation we require the degree $ 9 $ Taylor
expansion of the vector field $ X $ around an arbitrary fixed point $ p =
(0,0,h_{1}^*,h_{2}^*,\Gamma_1^*,\Gamma_2^*,x^*, y^*) $ in $ \sbc $.
Consequently, the potential $ K(\zeta_1,\zeta_2,h_1,h_2,\Gamma_1,\Gamma_2,x) $
must be expanded to degree $ 8 $.
		
Firstly, there are four terms in the original $ \hat{K}(Q_1,Q_2,x) $ which are
of the form,
\[ \frac{d_i}{|x+ C_l Q_1 + C_l Q_2 |} = \frac{d_i}{|x|} \frac{1}{|1+C_l Q_1/x +
    C_m Q_2/x|}. \] In order to get $ K $ to degree 9 we need to compute this
expansion to degree 4 in $ Q_1,Q_2 $ and substitute the various coordinate
transformations of Section \ref{sec:coordinates}. Consider the function,
\begin{equation}
  F(\xi) = (1+\xi)^{-1/2},\quad F:\C\to\C,
\end{equation}
which is holomorphic away from $ \xi = -1 $, in particular, in a neighbourhood
of 0. As it is holomorphic, it has a convergent Laurent series about $ 0 $ given
by
\begin{equation}\label{eqn:FExpansion}
  F(\xi) = \sum\limits_{j=1}^{\infty} (-1)^j\binom{-1/2}{j} \xi^j.
\end{equation}
The potential can then be written in the form,
\begin{equation}
  \begin{aligned}
    \hat{K}(Q_1,Q_2,x) &= \frac{1}{|x|}\left( d_1\left|F\left(c_2\frac{Q_1}{x}-c_4\frac{Q_2}{x}\right)\right|^2 + d_2\left|F\left(c_2\frac{Q_1}{x}+c_3\frac{Q_2}{x}\right)\right|^2 \right. \\
    &\quad\left. +
      d_3\left|F\left(-c_1\frac{Q_1}{x}-c_4\frac{Q_2}{x}\right)\right|^2 +
      d_4\left|F\left(-c_1\frac{Q_1}{x}+c_3\frac{Q_2}{x}\right)\right|^2
    \right).
  \end{aligned}
\end{equation}
Using $ |F(\xi)|^2 = F(\xi)F(\bar{\xi}) $ and \eqref{eqn:FExpansion}, we can
expand $ \hat{K}(Q_1,Q_2,x) $ in a neighbourhood of $ Q_1,Q_2 = 0, x\neq 0 $.
Substituting the values of $ d_i,c_i $ given in \eqref{eqn:massconsts} this
expansion to degree $ 4 $ of $ \hat{K} $ is given as,
\begin{equation}\label{eqn:potential}
  \begin{aligned}
    \hat{K}(Q_1,Q_2,x) &= \frac{1}{|x|}\left( b_{0} + \hat{K}_1 \left( \frac{Q_1}{x} \right) + \hat{K}_2\left( \frac{Q_2}{x} \right) + \hat{b}_{c} W_{c}\left(\frac{Q_1}{x},\frac{Q_2}{x} \right)  \right),\\
    \hat{K}_i(Q) &= \sum_{j=2}^{4} \hat{b}_{ij} W_j\left(Q \right), \\
  \end{aligned}
\end{equation}
where $ \hat{b}_{ij}, \hat{b}_c, b_0 $ are functions of the masses, $ W_j $ are
homogeneous degree $ j $ polynomials in $ Q, \overline{Q} $ independent of the
masses, and $ W_c $ is a homogeneous degree 4 polynomial independent of the
masses and containing all the degree 4 coupled terms between $ Q_1 $ and $ Q_2
$. The constants $b_{ij}$ and the homogeneous polynomials are given in Appendix
\ref{sec:Constants}.
		
\begin{remark}
  A priori, there should be coupled terms of lower order; for example the
  monomial $ Q_1 Q_2 $ of order 2. Remarkably, for the particular potential $
  \hat{K} $, all these terms vanish. The first coupled monomials are at order 4
  in $ \hat{b}_c \hat{W}_c(Q_1/x,Q_2/x) $. It will be seen that these first
  coupling terms play a crucial role in the arrival of non-vanishing resonant
  terms and ultimately the finite differentiability of the block map. In the
  collinear problem the crucial role of the coupling terms was heuristically
  observed by Martinez and Sim\'{o} \cite{Martinez1999}.
\end{remark}
	
Each of the transformations in Section \ref{sec:coordinates} can be carried
through Equation \eqref{eqn:potential} to get expansions to order 9 of $ K $. It
will take the form,
\begin{equation}\label{eqn:zPotential}
  \begin{aligned}
    K(\zeta_1,\zeta_2,x) &= \frac{1}{|x|}\left( b_{0} + K_1 \left( \frac{U_1^{-2}\Gamma_1^2\zeta_1^2}{x} \right) + K_2\left( \frac{U_2^{-2}\Gamma_2^2 \zeta_2^2}{x} \right) + b_{c} W_{c}\left(\frac{U_1^{-2}\Gamma_1^2\zeta_1^2}{x},\frac{U_2^{-2}\Gamma_2^2\zeta_2^2}{x} \right)  \right),\\
    K_i(Q) &= \sum_{j=2}^{4} b_{ij} W_j(Q/x), \\
  \end{aligned}
\end{equation} 
with $ b_{ij}, b_c $ given in Appendix~\ref{sec:Constants}.

The following result on the normal form near an arbitrary simultaneous binary
collision can now be given. 
The potential is expanded under the assumption that $|x|$ is large compared 
the distances of the distressed binaries $|Q_i|$. In the expansion of the potential 
thus $x$ appears in the denominator of all terms.
Due to the scaling and rotational symmetry of the
Hamiltonian $ H $, see~\eqref{eqn:scal}, 
it can be assumed that $ x $ has the asymptotic value $ x^* = 1 $.
From now on we assume that the scaling and rotation that achieves this has been fixed,
and hence the $x$ in the denominator of the expansion disappears.

After the Taylor expansion the terms with the smallest degree in $\zeta_i$
in the components of the vector field \eqref{eqn:generalisedLeviCivita} have degree 
$(2, 8, 2, 8, 5, 5, 3, 3, 4, 4, 4, 4)$.
As a result of the time scaling the leading order terms are always proportional to either 
$|\zeta_1|^2$, $|\zeta_2|^2$, or both, so that $\zeta_1 = \zeta_2 = 0$ makes the vector field
vanish, irrespective of the values of $h_1, h_2, \Gamma_1, \Gamma_2, x, y$.
The normal form transformation produces a vector field $X^9$ where the leading degrees are 
$(2,0,2,0,9,9,0,0,0,0,0,0)$, where 0 represents a vanishing component up to including order 9.

\begin{proposition}\label{prop:normalformforSBC}
  The normal form $ X^9 $ of the vector field
  \eqref{eqn:generalisedLeviCivita} in a neighbourhood of the simultaneous
  binary collision with asymptotic values $(h_{1},h_{2},\Gamma_1,\Gamma_2,x,y) =
  (h_{1}^*,h_{2}^*,\Gamma_1^*,\Gamma_2^*,1, y^*)$ is given to degree 9 by,
  \begin{equation}\label{eqn:preBlowUpNormalForm}
    \begin{aligned}
      I_1^\prime	&= |\zeta_2|^2 + R_1^{4}(\zeta_1,\zeta_2,h_1,h_2) + R_1^{6}(\zeta_1,\zeta_2,h_1,h_2) + R_1^{8}(\zeta_1,\zeta_2,h_1,h_2) \\
      L_1^\prime	&= 0 \\
      I_2^\prime	&= |\zeta_1|^2 + R_1^{4}(\zeta_2,\zeta_1,h_2,h_1)+ R_1^{6}(\zeta_2,\zeta_1,h_2,h_1) + R_3^{8}(\zeta_1,\zeta_2,h_1,h_2) \\
      L_2^\prime	&= 0 \\
      h_1^\prime	&= b_c a_1^{-1/3} R_5^{9}(\zeta_1,\zeta_2;\Gamma_1^*,\Gamma_2^*) \\
      h_2^\prime	&= - b_c a_2^{-1/3} R_5^{9}(\zeta_1,\zeta_2;\Gamma_1^*,\Gamma_2^*) \\
      \Gamma_1^\prime&=\Gamma_2^\prime = x^\prime = y^\prime = 0,
    \end{aligned}
  \end{equation}	
  where $ R_j^{k} $ are degree $ k $ real-valued, homogeneous polynomials in $
  I_1,L_1,I_2,L_2 $ independent of the masses and $ R_5^9 $ is a degree $ 8 $
  polynomial in $ \Gamma_1^*,\Gamma_2^* $. $  R_1^4$ and $R_1^6 $ are given in Appendix
  \ref{sec:Constants}.
\end{proposition}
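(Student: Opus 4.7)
The plan is to apply Theorem~\ref{thm:FormalNF} with the non-standard choice $\mathcal{U}_d = \ker(L_{X_c}^*)|_{\mathcal{H}_d}$, which is a valid splitting of $\mathcal{H}_d$ by the preceding lemma for all $d \leq 15$. In particular this covers all $d \leq 8$ needed to reach a degree-$9$ normal form, since the leading order $X_0$ is homogeneous of degree $s = 2$ and each $L_d$ raises polynomial degree by one. First I would Taylor-expand $X = |\zeta_1|^2 |\zeta_2|^2 X_H$ to degree $9$ around the base point $p = (0,0,h_1^*, h_2^*, \Gamma_1^*, \Gamma_2^*, 1, y^*)$ using the potential expansion~\eqref{eqn:zPotential} together with Lemma~\ref{eq:partialEquivalence}. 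The lowest degrees $(2,8,2,8,5,5,3,3,4,4,4,4)$ in the twelve real components follow from direct substitution into~\eqref{eqn:generalisedLeviCivita} combined with the fact that the only genuine coupling between the two distressed binaries in the potential appears at order $8$ in $\zeta$, via the term $b_c W_c$ in $K$.

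The iterative removal of non-resonant terms then proceeds degree by degree. I would exploit the block decoupling of $L_{X_c}^*$ noted below~\eqref{eqn:cohomop}: the only coupling between components of a vector field $w$ is between the $I_1$ and $I_2$ components, whereas each of the remaining ten components $w_i$ is constrained only by the scalar condition $X_c^* w_i = 0$. This allows $\ker(L_{X_c}^*)|_{\mathcal{H}_d}$ to be characterised direction by direction for $d \leq 8$, after which the projection of the Taylor expansion of $X$ onto this kernel can be read off at each degree. In the $I_1, I_2$ directions the surviving subspace consists of real polynomials in $(I_1, L_1, I_2, L_2)$ with parameter-dependent coefficients, producing the $R_1^4, R_1^6$ and $R_3^8$ terms of the proposition. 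In the $L_j, \Gamma_j, x, y$ directions all Taylor coefficients up to degree $9$ lie in $\Ima(L_d)$ and are therefore removable by the normal form transformation, yielding the vanishing statements $L_j' = \Gamma_j' = x' = y' = 0$; in the $h_j$ directions the same holds below degree $9$.

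The final degree-$9$ contribution in the $h_j$ directions must then come entirely from the coupling term $b_c W_c$: below order $4$ in $(Q_1, Q_2)$ the potential decouples as $K_1(\zeta_1) + K_2(\zeta_2)$ and contributes no genuinely mixed monomials that could resonate against $X_c$. Using $\dot h_j = 4 a_j^{-1/3} U_j^4 |\zeta_j|^{-2} \re(\partial_{\zeta_j} K)$ together with the factor $|\zeta_1|^2 |\zeta_2|^2$ from the time rescaling, this surviving term evaluates to $\pm b_c a_j^{-1/3} R_5^9(\zeta_1, \zeta_2; \Gamma_1^*, \Gamma_2^*)$; the opposite signs for $j = 1$ and $j = 2$ reflect the leading-order energy balance $a_1^{1/3} \dot h_1 + a_2^{1/3} \dot h_2 = 0$ on the coupling contributions. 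The principal obstacle will be the bulk symbolic bookkeeping needed to describe $\ker(L_{X_c}^*)$ in every direction at every relevant degree; this is rendered tractable both by the block decoupling noted above and by the delayed onset of coupling in $K$, which sharply restricts the resonant structure below degree $9$.
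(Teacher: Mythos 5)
Your outline follows essentially the same route the paper intends: apply Theorem~\ref{thm:FormalNF} with the non-standard choice $\mathcal{U}_d=\ker(L_{X_c}^*)|_{\mathcal{H}_d}$ (justified by the splitting lemma for $d\le 15$), Taylor-expand $X$ using the potential expansion \eqref{eqn:zPotential}, exploit the block structure of \eqref{eqn:cohomop}, and trace the surviving degree-$9$ resonance in the $h_j$ components back to the first coupling term $b_cW_c$ -- and the paper itself omits the actual proof as ``a huge computation that is far too unwieldy to be contained here,'' so your sketch is at essentially the same level of detail as what the authors supply. The one soft spot is your heuristic that the uncoupled parts of the potential ``contribute no genuinely mixed monomials that could resonate against $X_c$'': unmixed monomials can perfectly well fail to lie in $\Ima(L_8)$, so the removability of the uncoupled degree-$9$ contributions to $h_j^\prime$ (reflected in the overall $b_c$ prefactor of $R_5^9$) is itself an output of the explicit symbolic computation rather than an a priori consequence of the absence of mixing.
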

We omit the proof of the proposition as it is a huge computation that is far too
unwieldy to be contained here. The transform bringing the vector field $X$ to
the normal form $ X^9 $ and the resonant terms $ R_j^{8}, R_5^9 $ can be
provided upon request, the first few terms in powers of $L_i$ are listed in the appendix \eqref{eqn:R59}. 
Only the degree in $ \zeta_1,\zeta_2 $, and not the exact
form of the resonant terms, will be relevant to the arguments in the remainder
work.
		
There is a lot of information to unpack from Proposition
\ref{prop:normalformforSBC}. Firstly, the normal form procedure concludes with
the appearance of resonant terms at degree $ 9 $ in $ I_j,L_j $ for the $ h_j $
components. Consequently, the following corollary can be proved.
\begin{corollary}\label{cor:noFoliatation}
  It is not possible to construct a smooth foliation into invariant 4-planes
  normal to the simultaneous binary collision manifold $ \sbc $. Specifically,
  one can not find smooth invariants diffeomorphic to $ h_i $ at order $ 8 $ in
  $ I_i,L_i $.
\end{corollary}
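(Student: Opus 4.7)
The plan is to proceed by contradiction: assume smooth invariants diffeomorphic to $h_1, h_2$ exist and then show that the degree-8 correction needed to make $h_j$ flow-invariant cannot be found, because the resonant polynomial $R_5^9$ is not in the image of the relevant scalar homological operator.

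First I would work entirely in the normal form coordinates of Proposition~\ref{prop:normalformforSBC} and suppose, for contradiction, that a smooth function $\tilde h_j = h_j + g_j$ is flow-invariant under $X^9$ in a neighbourhood of a point $p \in \sbc$, with $g_j$ vanishing on $\sbc = \{\zeta_1 = \zeta_2 = 0\}$. Expanding $g_j$ in homogeneous components with respect to the $(I_1, L_1, I_2, L_2)$-degree (and treating $h_i, \Gamma_i, x, y$ as parameters), write $g_j = g_j^{(k)} + g_j^{(k+1)} + \dots$ where $g_j^{(k)}$ is the lowest-degree piece. Since $h_j' = \pm b_c a_j^{-1/3} R_5^9 + O(|\zeta|^{10})$ in the normal form, the invariance condition $X^9(\tilde h_j) = 0$ forces $k = 8$, and the order-9 equation reduces to the scalar homological equation
\begin{equation*}
  X_0\bigl(g_j^{(8)}\bigr) = \mp b_c a_j^{-1/3} R_5^9,
\end{equation*}
with $X_0 = |\zeta_2|^2 \partial_{I_1} + |\zeta_1|^2 \partial_{I_2}$ acting as a derivation on polynomial functions.

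The key step is then to show that this equation has no solution among degree-$8$ polynomials in $(I_i, L_i)$. Here I would invoke the scalar-function analogue of the decomposition lemma used in the normal form construction: the polynomial $R_5^9$ was produced as a nonzero element of $\ker(L_{X_c}^*)|_{\mathcal{H}_9}$, and the block-diagonal structure of $DX_c^*$ displayed in \eqref{eqn:cohomop} shows that the cohomological operator decouples into a piece purely in the $(I_1, L_1, I_2, L_2)$ variables. This decoupling transfers the vector-field complementarity $\mathcal{H}_9 = \operatorname{Im}(L_9) + \ker(L_{X_c}^*)$ to the scalar level, so that $R_5^9 \notin X_0(\mathcal{F}_8)$ where $\mathcal{F}_k$ denotes the degree-$k$ polynomials in $(I_i, L_i)$. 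Since $b_c \neq 0$ for generic masses (as it arises from the single non-vanishing coupling monomial $W_c$ in the potential), this is a nontrivial obstruction and yields the contradiction. The case in which $\tilde h_j$ is only required to be diffeomorphic to $h_j$ on $\sbc$ is absorbed by composing with a diffeomorphism of the $h_j$-axis before starting the argument; the leading order equation is unchanged up to a nonzero scalar.

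The main obstacle is the translation from the vector-field level, where Theorem~\ref{thm:FormalNF} and the preceding lemma are stated, to the scalar-function level, where the invariance question actually lives. This requires a careful bookkeeping of how the Fischer-adjoint complement behaves under the restriction to the first block of the operator $L_{X_c}^*$ in \eqref{eqn:cohomop}, and a verification that the explicit form of $R_5^9$ (given in \eqref{eqn:R59}) contains at least one monomial outside $X_0(\mathcal{F}_8)$. Once this bookkeeping is done, the conclusion is immediate and, together with the automatic invariance of $\Gamma_i, x, y$, rules out the existence of the full smooth invariant foliation of the normal bundle of $\sbc$ into $4$-planes.
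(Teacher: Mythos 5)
Your strategy is genuinely different from the paper's: you attempt a direct cohomological obstruction at the planar level, whereas the paper simply restricts to the collinear subproblem (where $L_1=L_2=0$) and cites the already-established impossibility of the foliation there --- if a smooth invariant diffeomorphic to $h_j$ existed near $\sbc$ in the planar problem, its restriction to the invariant collinear submanifold would produce one for the collinear problem, contradicting Corollary~4.3 of the collinear paper. That reduction is a two-line argument and is the entire content of the paper's proof.

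Your direct route has a genuine gap at its central step. You claim $R_5^9 \notin X_0(\mathcal{F}_8)$ (with $\mathcal{F}_8$ the degree-8 scalar polynomials in $I_i,L_i$) because $R_5^9$ ``was produced as a nonzero element of $\ker(L_{X_c}^*)$'' and the decomposition $\mathcal{H}_d = \Ima(L_d) + \mathcal{U}_d$ transfers to the scalar level. But the paper explicitly states that its chosen $\mathcal{U}_d = \ker(L_{X_c}^*)|_{\mathcal{H}_d}$ ``does not happen to be complementary subspace to $\Ima L_d$'': the sum is not direct, so $\mathcal{U}_d \cap \Ima(L_d)$ may well be nonzero. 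Consequently, the mere fact that $R_5^9$ survives this particular normal form procedure does \emph{not} certify that it is irremovable, and no amount of bookkeeping of the block structure of $DX_c^*$ in \eqref{eqn:cohomop} will repair this --- you would need an independent verification (e.g.\ an explicit computation with the polynomial in \eqref{eqn:R59}, or the restriction to $L_1=L_2=0$ where the collinear result already supplies the answer) that $R_5^9$ lies outside the relevant image. A secondary issue: your degree-9 homological equation $X_0(g_j^{(8)}) = \mp b_c a_j^{-1/3} R_5^9$ is incomplete, since lower-degree pieces $g_j^{(k)}$ with $k<8$ need not vanish (they need only lie in $\ker X_0$ at leading order, which contains e.g.\ $L_1$, $L_2$ and the leading part of $\kappa$), and such pieces feed into degree 9 through the $R_1^4$ and $R_1^6$ terms of $X^9$, e.g.\ via $R_1^4\,\partial_{I_1} g_j^{(6)}$. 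Both defects are bypassed at once by the paper's restriction argument.
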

\begin{proof}
  The result is an immediate consequence of
  \cite[Corollary~4.3]{duignanC83regularisationSimultaneous2020}. As the collinear problem sits inside
  the planar problem, and the impossibility of the foliation has been shown for
  the collinear problem, so too it must hold for the planar case.
\end{proof}
		
\begin{remark}
  Note that, after blow-up, degree $ 9 $ terms become degree $ 8 $ terms due to
  the rescaling by $ d\tau = r dt $. Therefore, an obstacle to the foliation is
  occurring at degree 8 in the blow-up space. This agrees with the results on
  the collinear problem \cite{duignanC83regularisationSimultaneous2020} where the degree $8$ was linked
  to the $ C^{8/3} $-regularity of the collinear problem.
\end{remark}

\begin{remark}\label{rem:integrals}
  Up to order 8, the normal form procedure has computed invariants given by the
  transformed $L_1,L_2,x,y,\Gamma_1,\Gamma_2$ and the additional
  \begin{equation}
    H = a_2^{-1/3} h_1 + a_1^{-1/3} h_2.
  \end{equation}
  There is in fact another one. The quantity $$ (I_1^3 + 3 I_1 L_1^2)
  - (I_2^3 + 3 I_2 L_2^2)$$ was used in Section \ref{sec:C0regularity} to
  obtain the integral $\kappa_2$ of the line field on the collision
  manifold. The quantity is an integral of the leading order dynamics. 
  It can be extended to an integral of $X^9$,
  \begin{equation}\label{eqn:kappaOriginal}
    \kappa = (I_1^3 + 3 I_1 L_1^2) - (I_2^3 + 3 I_2 L_2^2) + G^5(\zeta_1,\zeta_2,h_1,h_2) + G^7(\zeta_1,\zeta_2,h_1,h_2)
  \end{equation}
  where $G^j$ is a homogeneous degree $j$ polynomial in $\zeta_1,\zeta_2$. The
  full expressions are given in \eqref{eq:kappaFunctions}.
  The existence of each of these integrals will play a central role in showing
  the $R_4$ and $ R_6 $ terms do not affect the $ 8/3 $ regularity of the block map.
\end{remark}

\begin{remark}
The rectangular and the Caledonian problem mentioned in remark~\ref{rem:cal} 
are invariant sub-problems that are contained in the collision manifold.
For these it is known \cite{Elbialy1993planar} that a smooth regularisation is possible.
As described in remark~\ref{rem:cal} the condition that defines the collision 
manifold is $L_1 = L_2 = 0$ and $I_1 = I_2$. When $L_1 = L_2 = 0$, $\Gamma_i = 1$ the problem 
reduces to the collinear problem, and thus $R_5^9$ also reduces to the 
polynomial found in the collinear problem \cite{duignanC83regularisationSimultaneous2020} up to an overall factor that depends on $\Gamma_i$, see equation \eqref{eqn:R59} in the appendix. This polynomial vanishes when $I_1 = I_2$ and hence the term that is responsible for the $C^{8/3}$ regularisability vanishes.
\end{remark}


\subsection{Geometric Sketch of Proof}\label{sec:GeometricSketch}
A procedure for determining the finite differentiability of the block map can
now be sketched. The procedure is not too dissimilar to the one developed in
showing the $ C^{8/3} $-regularity of the collinear problem \cite{duignanC83regularisationSimultaneous2020}. The crucial
difference is the higher co-dimensionality of $ \sbc $ in the planar problem in
comparison to the collinear problem. Consequently, the geometrical sketch is
harder to picture and more importantly, certain computations in the proof become
substantially more involved and require some new theory.
				
Recall that Proposition \ref{prop:structureOfCollisionManifold} gives the
topological structure of the flow near $ \sbc $. Understanding this structure
was instrumental in proving the $ C^0 $-regularity of the block map $
\pib:\Sigma_0 \to \Sigma_3 $. This structure was revealed by blowing up $ \sbc $
to produce the collision manifold $ \mathcal{C} $ and a consequent analysis of
the flow on $ \mathcal{C} $. The flow on $ \mathcal{C} $ will again be central
to determining the finite differentiability.
		
Any tubular neighbourhood $ \mathcal{T} $ of $ \mathcal{C} $ has a natural
decomposition due to Proposition \ref{prop:structureOfCollisionManifold}.
Overlapping neighbourhoods $ \mathcal{U} $, a tubular neighbourhood of the
normally hyperbolic invariant manifold $ \mathcal{N} $, and $ \mathcal{V} $, a
tubular neighbourhood of the homoclinic connection of $ \mathcal{N} $, can be
chosen so that $ \mathcal{T} = \mathcal{U}\cup\mathcal{V} $. The decomposition
splits $ \mathcal{T} $ into a region $ \mathcal{U} $ where the flow is
topologically equivalent to a neighbourhood of a manifold consisting entirely of
saddle singularities and a region $ \mathcal{V} $ where the flow is
topologically equivalent to a regular flow.
		
By splitting the flow into different topological regions, a geometric sketch for
computing the block map $ \pib:\Sigma_0\to \Sigma_3 $ unfolds. The idea is to
introduce an intermediate section, $ \Sigma_{int} \subset
\mathcal{U}\cap\mathcal{V} $, which is homeomorphic to $ (-a,a)\times
S^2\times\R^8 $ for $ a\ll 1 $, transverse to the flow on $ \mathcal{C} $, and
the intersection $ \mathcal{C}\cap\Sigma_{int} $ is homeomorphic to $ S^2 $. An
example of such a $ \Sigma_{int} $ is to take the boundary of the box $ [-1,1]^3
$ on $ \mathcal{C} $ in the coordinates $
(\beta_\alpha,\gamma_\alpha,\delta_\alpha) $ and take the direct product with an
interval $ (-a,a) $ in $ r_\alpha $ and with $ \R^8 $ from the remaining
variables. See Figure \ref{fig:secplotPlanar} for a depiction of $ \Sigma_{int}
$.
		
The conditions on $ \Sigma_{int} $ force it to surround $ \mathcal{N} $ so that,
if $ \mathcal{U} $ is taken sufficiently small, any orbit in $ \mathcal{U} $
will intersect $ \Sigma_{int} $. Then, as shown in the proof of Theorem
\ref{thm:C0Regularisable}, these near collision orbits shadow an orbit on $
\mathcal{C} $. After leaving $ \mathcal{U} $, as all the orbits on $ \mathcal{C}
$ are necessarily homoclinic trajectories, the near collision orbits will
transition through $ \mathcal{V} $ and necessarily reenter $ \mathcal{U} $.
Ultimately, the orbits intersect $ \Sigma_{int} $ again. It follows that $
\Sigma_{int} $ splits $ \pib $ over the different topological regions $
\mathcal{U},\mathcal{V} $,
\begin{equation*}
  \begin{aligned}
    &\pib = D_2\circ T\circ D_1,\\
    \qquad D_1:\Sigma_0 \to \Sigma_{int},\qquad &T:\Sigma_{int} \to
    \Sigma_{int},\qquad D_2: \Sigma_{int} \to \Sigma_3.
  \end{aligned}
\end{equation*}
		
\begin{figure}[ht]
  \centering %
	\def\svgwidth{0.5\linewidth}
	\import{./figures/}{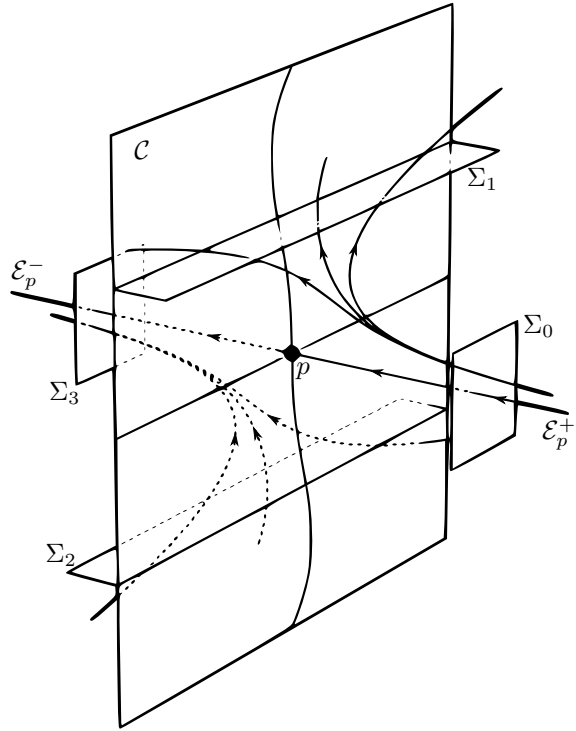}

  \caption{A depiction of the hyperbolic transitions $ D_1, D_2 $. For clarity,
    only two of the walls, $ \Sigma_1, \Sigma_2 $, of $ \Sigma_{int} $ have been
    sketched. It is impossible to plot the co-dimension $ 4 $ manifold $
    \mathcal{N} $ of the planar problem. Instead, a co-dimension 3 problem has
    been plotted. Further, only a projection into a 3-dimensional slice, normal
    to $ \mathcal{N} $, containing $ p\in\mathcal{N} $, can be shown.}
  \label{fig:secplotPlanar}
\end{figure}
	
On the one hand the maps $ D_1, D_2 $ are transitions near a normally hyperbolic manifold of
saddle singularities. On the other hand, $T$ is a smooth transition map. The
asymptotic structure of the maps $ D_1,D_2 $ was studied in
\cite{duignanNormalFormsManifolds}. These asymptotic structures will be used in
the proceeding section to prove Theorem \ref{thm:C2Regularisation}. However,
before the details of the asymptotics are discussed, it is useful to make some
assumptions to get a clearer picture of how finite differentiability of the
block-map may creep in.
		
In this sketch of the proof, the variables $x,y,z,w \in \R$ are used independently of any previous meaning.
It was shown in Proposition \ref{prop:structureOfCollisionManifold} that $ \NHIM
$ is a manifold of normally hyperbolic saddle singularities of co-dimension 4.
Assume that the one-dimensional invariant manifold leaving a point on $ \NHIM $
in the normal direction is stable. Then, up to scaling, the non-zero eigenvalues
of $ \NHIM $ are $ (-1,3,1,1) $. Assume that coordinates $
(x,y,z,w,u)\in\R^4\times\R^8 $ have been chosen near $ \NHIM $ so that the local
stable manifold is given by $ y=z=w=0 $ and the local unstable manifold by $ x =
0 $. To leading order in $ x,y,z,w,u $, the vector field in a tubular
neighbourhood of $ \NHIM $ is of the form,
\begin{equation}\label{eqn:linearisedNHIM}
  \begin{aligned}
    \dot{x}	&= x \\
    \dot{y}	&= -3y \\
  \end{aligned}
  \qquad
  \begin{aligned}
    \dot{z}	&= -z \\
    \dot{w}	&= -w \\
    \dot{u} &= 0
  \end{aligned}
\end{equation}
In general, system \eqref{eqn:linearisedNHIM} may have higher order terms. One
can ask whether such terms are removable, that is, whether the vector field near
$ \NHIM $ has a linearisation. This is a question of normal forms for $ \NHIM $.
This type of normal form was studied in \cite{duignanNormalFormsManifolds}.
There, it was shown that linearisation is only possible if the
non-zero-eigenvalues of $ \NHIM $, say $ \lambda\in \C^4 $, do not satisfy any
of the resonance conditions,
\[ n \cdot \lambda = \lambda_i, \qquad n \cdot \lambda = 0, \] where $ n\in\N^4
$ and $ n\cdot\lambda $ is the usual dot product. Unfortunately, for our
specific $ \NHIM $, the eigenvalues $ \lambda = (-1,3,1,1) $ do indeed satisfy
the resonance conditions, thus it cannot be assumed that the system admits a
linearisation. The exact form of the normal form is given in the appendix 
 as Proposition \ref{prop:normalFormNHIM}.
		
 Regardless, assume the vector field local to $ \NHIM $ can be linearised. If
 this is true, the system admits the integrals,
\begin{equation}\label{eqn:linearisedIntegrals}
  x^3 y, \quad x z, \quad x w.
\end{equation}
Now choose sections $ \Sigma_0 = \{ x = 1 \},\, \Sigma_3 = \{x=-1\} $ and take $
\Sigma_{int} $ as the boundary of the box $ (-a,a) \times [-1,1]^3\times \R^8 $.
Further, decompose $ \Sigma_{int} $ into its faces,
\[ \Sigma_\eta^\pm := \{\eta=\pm1\}\cap \Sigma_{int} \text{ for } \eta=y,z,w
  . \] In turn, $ D_1, D_2 $ can be decomposed into the directional transitions
\[ D_{1,\eta}^\pm : \Sigma_0 \to \Sigma_{\eta}^\pm,\qquad D_{2,\eta}^\pm :
  \Sigma_\eta^\pm \to \Sigma_3,\] with $ \eta = y,z,w $. Further, the domain $
\mathcal{D}_\eta^\pm $ of each $ D_{1,\eta}^\pm $ is a different subset of $
\Sigma_0 $. It follows that the block map $ \pib $ is split over these domains
into $ \pib_\eta^\pm $. 


Splitting $T$ over the different faces of $\Sigma_{int}$ requires some technicalities. It is entirely possible that trajectories on $\Sigma_\eta^+$ do not reach $\Sigma_\eta^-$, instead passing through another face of $\Sigma_{int}$. There are two ways to solve this issue; restrict the domain of $T|_{\Sigma_\eta^+}$ so that the codomain is $\Sigma_\eta^-$, or modify the faces of $\Sigma_{int}$ so that the codomain is contained in $\Sigma_{\eta}^-$. We will take the latter solution as asserting the domain rather than the codomain of $T$ is more useful for computing $\pib$. Specifically, we can modify $\Sigma_{int}$ so that a given face $\Sigma_\eta^-$ is sufficiently wide enough to capture all trajectories from $\Sigma_\eta^+$. Hence, we can split, $T_\eta:\Sigma_\eta^+ \to \Sigma_\eta^-$.
		
Now, let us first focus on $ \pib_{y}^+ $. If one takes coordinates $
(-y_3,-z_3,-w_3,u_3) $ on $ \Sigma_3 $ and coordinates $ (-x_2,-z_2,-w_2,u_2) $
on $ \Sigma_y^- $ then the integrals \eqref{eqn:linearisedIntegrals} give $
D_{2,y}^- $ as,
\begin{equation}
  D_{2,y}^-:\qquad	y_3 = x_2^3, \qquad z_3 = x_2 z_2, \qquad w_3 = x_2 w_2,\qquad u_3 = u_2.  
\end{equation}
Similarly, taking coordinates $ (y_0,z_0,w_0,u_0) $ on $ \Sigma_0 $ and
coordinates $ (x_1,z_1,w_1,u_1) $ on $ \Sigma_y^+ $, $ D_{1,y}^+ $ is obtained
as
\begin{equation}
  D_{1,y}^+:\qquad x_1 = y_0^{1/3}, \qquad z_1 = y_0^{-1/3} z_0,\qquad w_1 = y_1^{-1/3} w_0, \qquad u_1 = u_0.
\end{equation}
It is immediate that $ D_{1,y}^+ = (D_{2,y}^-)^{-1} $. However, it will be shown
in this section that this may not be true when higher order terms are
considered.
		
Note that, analysing solutions near-collision amounts to treating $ x_2 $ and $
y_0,z_0,w_0 $ as small variables. $ D_2 $ is clearly continuous in $ x_2 $,
however, $ D_{1,y}^+ $ is not continuous in $ y_0,z_0,w_0 $. This is a
consequence of the fact that $ D_1:\Sigma_0 \to \Sigma_{int} $ maps the 8
dimensional surface $ (0,0,0,u_0) $ to the $ 10 $ dimensional surface $
(0,z_1,w_1,u_1) $. However, if $ y_0,z_0,w_0 $ are rescaled by $ \e\ll 1 $ to
reflect the fact they are all small, then
\[ D_{1,y}:\qquad y_1 = \e^{1/3} y_0^{1/3}, \qquad z_1 = \e^{2/3} y_0^{-1/3}
  z_0,\qquad w_1 = \e^{2/3} y_1^{-1/3} w_0, \qquad u_1 = u_0, \] is continuous
in $ \e $ on $ [0,1) $ for any choice $ (y_0,z_0,w_0) $ with $ y_0 \neq 0 $.
This is sufficient to compute the asymptotic expansion of $ \pib $ as only an
asymptotic expansion in $ \e $ is desired.
		
Here is the crucial point of the sketch of finite differentiability. As $
D_{1,y}^+ = (D_{2,y}^-)^{-1} $ then clearly their composition $ D_{2,y}^- \circ
D_{1,y}^+ $ is the identity and hence smooth. However, when the smooth
transition map $ T $ is introduced, $ \pib_y^+ $ becomes an asymptotic series in
$ \e^{1/3} $. This is most easily conveyed by letting $ T $ be a smooth map of
the form,
\begin{equation}\label{eqn:suggestedT}
  T: (x_1,z_1,w_1,u_1) \mapsto (x_1,z_1,w_1,u_1 + a(z_1,w_1) x_1^8),
\end{equation} 
for some smooth function $ a $. Then composing yields,
\begin{align*}
  \pib_y^+	&= D_{2,y}^- \circ T \circ D_{1,y}^+ \\	
						&= (\e y_0,\e z_0, \e w_0, u_0 + a(0,0) \e^{8/3} y_0^{8/3} +\dots),
\end{align*}
giving the $ 8/3 $ regularity. This is the essence of the finite
differentiability in the regularisation of simultaneous binary collisions.
		
It is worth remarking that the block map $ \pib $ may be smoother in certain
directions. To see this, consider the component $ \pib_z^+ $. The hyperbolic
transition maps are given by
\begin{align*}
  D_{1,z}^+: (x_1,y_1,w_1,u_1) &= ( \e z_0,\ \e^{-2} z_0^{-3} y_0,\ z_0^{-1} w_0,\ u_0 ) \\
  D_{2,z}^-: (y_3,z_3,w_3,u_3) &= (x_2^3 y_2,\ x_2,\ x_2 w_2,\ u_2)
\end{align*}
Provided $ y_0\neq 0 $ we have, as $ \e \to 0 $, $ y_1 \to \infty $. Hence $
D_{1,z}^+ $ is not continuous in $ \e $ for $ y_0 \neq 0 $. But, $ y_1 \to
\infty $ merely implies, as we consider smaller $ \e $, that points starting on
$ \Sigma_0 $ will intersect a different face of $ \Sigma_{int} $. In this case,
provided $ y_0 \neq 0 $, points on $ \Sigma_0 $ for sufficiently small $ \e $
will intersect $ \Sigma_y^\pm $. The only case left to treat then is $ y_0 = 0
$.
		
Now assume the transition map $ T $ between $ \Sigma_y^+ $ and $ \Sigma_y^- $ is
something of the form
\[ T : (x_1,0,w_1,u_1) \mapsto (x_1,0,w_1,u_1 + b(w_1) x_1^8), \] as before with
$ b $ some smooth function. Then composing yields,
\[ \pib_z^+ = D_{2,z}^- \circ T \circ D_{1,z}^+ = (0,\e z_0, \e w_0, u_0 + b(w_0
  z_0^{-1}) \e^{8} z_0^{8} +\dots). \] Provided $ z_0 \neq 0 $, the regularity
in the $ z $- direction is then $ 8 $, smoother than the $ 8/3 $ of the $ y
$-direction. The final direction, the $ w $-direction, is analogous to the $ z
$-direction.
		
To turn this sketch into a proof, one has to deal with all the possible resonant
terms in the normal form which prevent the linearisation of the vector field
near $ \NHIM $. Moreover, the transition map $ T $ must be shown to behave as
suggested in \eqref{eqn:suggestedT}.

\subsection{Proof of $ C^{8/3} $-Regularisation}
	
In this section the necessary theory to prove Theorem \ref{thm:C2Regularisation}
on the $ C^{8/3} $-regularisation of simultaneous binary collisions is developed.
The work in \cite{duignanNormalFormsManifolds} is used in Appendix
\ref{sec:proofOfLemma} to obtain Lemma \ref{lem:ApproxDulacMap}, an asymptotic
expansion of the hyperbolic transitions $ D_1, D_2 $ in the planar 4-body
problem. Then the smooth transition map $ T $ is computed to sufficiently high order using the integrals found in Remark \ref{rem:integrals} 
and the result summarised in Lemma \ref{lem:ApproxT}. Ultimately, the
asymptotic series computed for each $ D_2,T,D_1 $ are composed to prove Theorem
\ref{thm:C2Regularisation}.
		
\subsubsection{Asymptotic Structure of the Hyperbolic Transitions}

Following the geometric sketch in Section \ref{sec:GeometricSketch}, we seek the
asymptotic expansions of the transition $ D_1,D_2 $ near the normally hyperbolic
manifold $ \NHIM $ corresponding to simultaneous binary collisions. Consider
again the system $ X^9 $, defined in \eqref{prop:normalformforSBC}. We want to
choose coordinates so that $ \NHIM $ corresponds to $
[\alpha,\beta,\gamma,\delta] = [1,0,0,0] $. In doing so, it can be ensured that the
chart $ U_\alpha $ contains $ \NHIM $ whilst $ U_\beta,U_\gamma,U_\delta $
contain no singularities. Similar to the collinear problem in
\cite{duignanC83regularisationSimultaneous2020}, this is done by first making the transform,
\begin{equation}
  J_1 = \frac{1}{2}(I_1 + I_2),\qquad J_2 = \frac{1}{2}(I_1 - I_2)
\end{equation}
Then $ X^9 $ is transformed to a system of the form,
\begin{equation}\label{eqn:rotNormalForm}
  \begin{aligned}
    J_1^\prime	&= J_1^2+J_2^2+\frac{1}{2}(L_1^2+L_2^2) + \tilde{R}_1^{4}(J_j,L_j,h_1,h_2)+ \tilde{R}_1^{6}(J_j,L_j,h_1,h_2) + \tilde{R}_1^{8}(J_j,L_j,h_1,h_2) \\
    L_1^\prime	&= 0 \\
    J_2^\prime	&= -2 J_1 J_2 - \frac{1}{2}(L_1^2-L_2^2)+\tilde{R}_3^{4}(J_j,L_j,h_1,h_2)+ \tilde{R}_3^{6}(J_j,L_j,h_1,h_2) + \tilde{R}_3^{8}(J_j,L_j,h_1,h_2) \\
    L_2^\prime	&= 0 \\
    h_1^\prime	&= a_1^{-1/3} b_c \tilde{R}_5^{9}(J_j,L_j; \Gamma_j^*) \\
    h_2^\prime	&= -a_2^{-1/3} b_c \tilde{R}_5^{9}(J_j,L_j;\Gamma_j^*) \\
    \Gamma_1^\prime&=\Gamma_2^\prime = x^\prime = y^\prime = 0.
  \end{aligned}
\end{equation}	
Let $ \tilde{X}_9 $ be the vector field associated to \eqref{eqn:rotNormalForm}.
			
Performing the $ \alpha $-directional blow-up on $ \tilde{X}_9 $, we obtain the
system,
\begin{equation}\label{eqn:Xtilalpha}
  \begin{aligned}
    \ra^\prime	&= \ra (1 + \ga^2 +\frac{1}{2}(\ba^2 +\da^2) )+ \ra^3\tilde{R}_1^{4}(1,\ba,\ga,\da,h) + \ra^5\tilde{R}_1^{6}(1,\ba,\ga,\da,h) + O(\ra^7)  \\
    \ba^\prime	&= -\ba \ra^{-1} \ra^\prime \\
    \ga^\prime	&= -3 \ga -\ga\left(\ga^2 +\frac{1}{2}(\ba^2 +\da^2)\right) - \frac{1}{2}(\ba^2-\da^2) + \ra^3\bar{R}_{3,\a}^4(\ba,\ga,\da,h) + \ra^5\bar{R}_{3,\a}^{6}(\ba,\ga,\da,h) + O(\ra^7) \\
    \da^\prime	&= -\da \ra^{-1} \ra^\prime \\
    h_1^\prime	&= a_1^{-1/3} b_c \ra^8 \tilde{R}_5^{9}(1,\ba,\ga,\da) +O(\ra^9) \\
    h_2^\prime	&= -a_2^{-1/3} b_c \ra^8 \tilde{R}_5^{9}(1,\ba,\ga,\da) + O(\ra^9) \\
    x^\prime &= y^\prime = \Gamma_1^\prime = \Gamma_2^\prime = 0 +O(\ra^9)
  \end{aligned}
\end{equation}
with
\begin{equation}
  \bar{R}_{3,\a}^j (\ba,\ga,\da,h) := \tilde{R}_3^j(1,\ba,\ga,\da,h) - \ga \tilde{R}_1^j(1,\ba,\ga,\da,h).
\end{equation}
Let the vector field associated to \eqref{eqn:Xtilalpha} be $ \tilde{X}_\alpha$.
			

Using the approximate integral $\kappa$ introduced in \eqref{eqn:kappaOriginal},
the leading order terms of the normal form transformation for the hyperbolic
singularity at $\NHIM$ can be computed. In the current coordinates
$r_\alpha,\ba,\ga,\da$ the approximate integral is given by,
\begin{equation}
    \kappa_\alpha = 6\ra^3\left(\ga+\frac{1}{3}\ga^3 + \frac{1}{2} \da^2(\ga-1) +
      \frac{1}{2} \ba^2 (1+\ga)\right) + \ra^5 \tilde{G}_\alpha^5(\ba,\ga,\da) +
      \ra^7\tilde{G}_\alpha^7(\ba,\ga,\da) + O(\ra^{10}). 
\end{equation}
with $\tilde{G}^j(J_1,L_1,J_2,L_2) = G^j(J_1+J_2,L_1,J_1-J_2,L_2)$ and $\tilde{G}^j_\alpha$ is $\tilde{G}^j$ in the $U_\alpha$ chart.

In the vector field the equations for $\ra, \ba, \da$ are all proportional to $\ra'/\ra$, 
which can be removed by a time scaling. The equation for $\ga$ can be simplified 
by observing that up to a factor $6 \ra^3$ the integral to leading order is equal 
to $\ga$. Thus it is used to introduce a modified $\tga$ which satisfies a much 
simpler equation.
Thus introducing the coordinate $\tga$ through 
\begin{equation}
  \label{eq:normalformtraf}
  \tga = \tfrac16 \ra^{-3}\kappa,
\end{equation}
provides the normal form coordinates to order $8$ in $\ra$. With an additional
time rescaling of $d\tau = \ra^{-1}\ra^\prime d\tilde{\tau}$, the vector field
\eqref{eqn:Xtilalpha} is put in the normal form
\begin{equation}\label{eqn:ApproxNF}
  \begin{aligned}
    \ra^\prime	&= \ra  + O(\ra^8)  \\
    \ba^\prime	&= -\ba + O(\ra^8)\\
    \tga^\prime	&= -3 \tga + O(\ra^8) \\
    \da^\prime	&= -\da +  O(\ra^8)\\
    h_1^\prime &= h_2^\prime = x^\prime = y^\prime = \Gamma_1^\prime =
    \Gamma_2^\prime = 0 + O(\ra^8)
  \end{aligned}
\end{equation}
an all the extraneous polynomial terms of degree 4 and 6 have been removed.
			
As argued in Section \ref{sec:GeometricSketch}, we want to compute the
transitions $ D_1:\Sigma_0\to\Sigma_{int} $ and $ D_2:\Sigma_{int}\to\Sigma_3 $
for some surfaces $ \Sigma_0,\Sigma_3 $ transverse to the manifolds $
\mathcal{E}^+,\mathcal{E}^-$ respectively, and $ \Sigma_{int} $ homeomorphic to
$ (-a,a)\times S^2\times \R^8 $ and transverse to the unstable manifold. We will
study these transitions for the specific sections,
\begin{equation}
  \begin{aligned}
    \Sigma_0	&= \{ (1,{\b}_0,{\g}_0,{\d}_0,u_0) | ({\b}_0,{\g}_0,{\d}_0) \in [-1,1]^3, u_0 \in \R^8 \},\\
    \Sigma_{int} &= (-a,a) \times \partial([-1,1]^3)\times \R^8,  \\
    \Sigma_3 &= \{ (-1,-{\b}_3,-{\g}_3,-{\d}_3,u_3) | ({\b}_3,{\g}_3,{\d}_3) \in
    [-1,1]^3, u_3 \in \R^8 \}.
  \end{aligned}
\end{equation}
The opposite orientation of $ \Sigma_3 $ has been chosen to match the
non-orientability of $ \mathcal{C}\cong\RP^3\times\mathcal{M} $. The maps $
D_1,D_2 $ between these two specific sections will be referred to as
\textit{Dulac maps} due the analogous properties of the well studied Dulac maps
near hyperbolic saddles in $ \R^2 $. A good review of results known for the
planar Dulac maps is given in \cite{roussarie1995bifurcations}. Theory for Dulac
maps near normally hyperbolic invariant manifolds is given in
\cite{duignanNormalFormsManifolds}.
			
Studying the hyperbolic transitions $ D_1,D_2 $ is most easily done by splitting
up their actions on the various faces of $ \Sigma_{int} $. Explicitly, let
\[ \Sigma_\eta^\pm := \{\eta=\pm1\}\cap \Sigma_{int} \text{ for }
  \eta=\ba,\tga,\da , \] so that $ \Sigma_{int} = \bigcup_\eta \Sigma_{\eta}^\pm
$ and define,
\[ D_{1,\eta}^\pm : \Sigma_0 \to \Sigma_{\eta}^\pm,\qquad D_{2,\eta}^\pm :
  \Sigma_\eta^\pm \to \Sigma_3. \] For each $ \eta = \ba,\tga,\da $, the maps $
D_{2,\eta}^\pm $ will be denoted the \textit{$ \eta $-directional Dulac map} and
$ D_{1,\eta}^\pm $ its inverse.

To prove the $ C^{8/3} $-regularity of the block map $ \pib $ it is
sufficient to study just the maps $ D_{1,\tga}^+,D_{2,\tga}^- $,
because the factor 3 in the vector field only appears in the $\tga$ component.
Let $(r_1,\b_1,\d_1,u_1), (-r_2,-\b_2,-\d_2,u_2), $ be coordinates on $
\Sigma_{\tga}^+,\Sigma_{\tga}^- $ respectively. From both Proposition
\ref{prop:DirectionalDulacMaps} and Lemma \ref{prop:invDOrder} the following
lemma on the asymptotic expansion of the hyperbolic transitions near $ \NHIM $
can be concluded.
\begin{lemma}\label{lem:ApproxDulacMap}
  Scale $ (\b_0,\g_0,\d_0) = \e (\b_0,\g_0,\d_0) $. Then the hyperbolic
  transitions $ D_{1,\tga}^+ $ and $ D_{2,\tga}^- $ have the asymptotic
  approximations
  \begin{equation}
    \begin{aligned}
      D_{1,\tga}^+ &:\quad
      \begin{aligned}
        {r}_1	&\sim \e^{1/3} {\g}_0^{1/3} \left(1 + O(\e^{3}\ln\e) \right) \\
        {\b}_1 &\sim \e^{2/3} {\g}_0^{-1/3} {\b}_0\left(1 +
          O(\e^{3}\ln\e)\right)
      \end{aligned}\quad
      \begin{aligned}
        {\d}_1	&\sim \e^{2/3} {\g}_0^{-1/3} {\d}_0 \left(1 + O(\e^{3}\ln\e) \right)\\
        u_1 &\sim u_0 + O(\e^{3}\ln\e)
      \end{aligned}\\[1em]
      D_{2,\tga}^- &:\quad
      \begin{aligned}
        {\b}_3		&\sim  {r}_2 {\b}_2 \left(1 + O(\e^{3}\ln\e)\right)\\
        {\g}_3 &\sim {r}_2^{3} \left(1 + O(\e^{3}\ln\e) \right)
      \end{aligned}\quad
      \begin{aligned}
        {\d}_3	&\sim  {r}_2 {\d}_2 \left(1 + O(\e^{3}\ln\e) \right)\\
        u_3 &\sim u_2 + O(\e^{3}\ln\e)
      \end{aligned}
    \end{aligned}
  \end{equation}
  for $ u = (h_1,h_2,x,y,\Gamma_1,\Gamma_2) $.
\end{lemma}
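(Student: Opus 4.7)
The plan is to work with the normal form \eqref{eqn:ApproxNF}, in which the leading part of the vector field is diagonal with eigenvalues $(1,-1,-3,-1)$ in the normal variables $(r_\alpha,\b_\alpha,\tga,\da)$, the slow variables $u=(h_1,h_2,x,y,\Gamma_1,\Gamma_2)$ are constants of motion, and every nonlinear correction is of size $O(r_\alpha^8)$. I would first compute the Dulac maps for the linearised flow in closed form from the explicit solutions, and then bound the contribution of the $O(r_\alpha^8)$ remainder by invoking the Dulac-map theory for normally hyperbolic manifolds developed in \cite{duignanNormalFormsManifolds}, whose main outputs for this paper are Proposition \ref{prop:DirectionalDulacMaps} and Lemma \ref{prop:invDOrder}.

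The linear flow integrates to $r_\alpha(t)=r_\alpha(0)e^t$, $\b_\alpha(t)=\b_\alpha(0)e^{-t}$, $\tga(t)=\tga(0)e^{-3t}$, $\da(t)=\da(0)e^{-t}$ and $u(t)=u(0)$. For $D_{1,\tga}^+$ I would take initial data $(1,\e\b_0,\e\g_0,\e\d_0)$ on $\Sigma_0$ and run the flow in backward time until $\tga=1$, i.e.\ until $\e\g_0 e^{-3T_1}=1$; this gives $e^{T_1}=(\e\g_0)^{1/3}$ and hence $r_1=\e^{1/3}\g_0^{1/3}$, $\b_1=\e^{2/3}\g_0^{-1/3}\b_0$, $\d_1=\e^{2/3}\g_0^{-1/3}\d_0$, $u_1=u_0$. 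For $D_{2,\tga}^-$ I would start at $(-r_2,-\b_2,-1,-\d_2)$ on $\Sigma_{\tga}^-$ and flow forward until $r_\alpha=-1$, giving $e^{T_2}=1/r_2$ and therefore $\g_3=r_2^3$, $\b_3=r_2\b_2$, $\d_3=r_2\d_2$, $u_3=u_2$. These are precisely the leading terms in the lemma.

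The main step is then to estimate the remainder. The eigenvalue set $(1,-1,-3,-1)$ is highly resonant (for example $(1,1,0,0)\cdot\lambda=0$, $(3,0,1,0)\cdot\lambda=0$, $(0,3,0,0)\cdot\lambda=\lambda_3$), so the vector field cannot be fully linearised and the $r_\alpha^8$ remainder genuinely persists. The NHIM Dulac theory of \cite{duignanNormalFormsManifolds} is tailored to exactly this situation: it takes as input the partial normal form \eqref{eqn:ApproxNF}, with the known order of the first unremovable remainder, and returns an asymptotic expansion of the Dulac map together with a remainder of the form $r_\alpha^{m}\ln(1/r_\alpha)$ on the exit section, where $m$ is dictated by the remainder order. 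With $m=8$ and $r_\alpha\sim\e^{1/3}$ on $\Sigma_{\tga}^\pm$ this produces the relative error $O(\e^{3}\ln\e)$ claimed for the multiplicative factors on $r_1,\b_1,\d_1,\b_3,\g_3,\d_3$ and the absolute error $O(\e^3\ln\e)$ on the slow variables $u_1,u_3$. The hardest part will be this remainder estimate: a naive Gronwall bound on $u'=O(r_\alpha^8)$ over a transit time $T\sim\ln(1/\e)$ does not suffice, and one needs the structural information carried by the approximate integrals $L_1,L_2,\kappa$ of Remark \ref{rem:integrals} to show that would-be large contributions to $u$ in fact cancel and only an $O(\e^3\ln\e)$ residue survives. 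Once those structural inputs and the two cited results from \cite{duignanNormalFormsManifolds} are in hand, specialising them to \eqref{eqn:ApproxNF} and extracting the $\tga$-directional components is a direct computation.
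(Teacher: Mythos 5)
Your skeleton is the paper's: reduce to the partial normal form \eqref{eqn:ApproxNF}, read the leading terms off the diagonal linear flow with rates $(1,-1,-3,-1)$, and control the remainder with the normally hyperbolic Dulac theory of \cite{duignanNormalFormsManifolds}, i.e.\ Proposition \ref{prop:DirectionalDulacMaps} for the forward map and Lemma \ref{prop:invDOrder} for the inverse. Your explicit integration of the linear flow reproduces exactly the leading terms those two results encode, so that part is fine.

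The gap is in the remainder bookkeeping, which is precisely the content of the paper's proof in Appendix \ref{sec:proofOfLemma}. You cannot pass directly from ``the remainder in \eqref{eqn:ApproxNF} is $O(\ra^8)$'' to ``the Dulac map carries a relative error $\ra^{8}\ln(1/\ra)$ evaluated at $\ra\sim\e^{1/3}$''---and even if you could, that heuristic yields $\e^{8/3}\ln\e$, not the $\e^{3}\ln\e$ you assert. The error in Proposition \ref{prop:DirectionalDulacMaps} and Lemma \ref{prop:invDOrder} is organised by the order $|n|$ of the resonant monomials $U_0^n$, each factor of which is $O(\e)$ after the rescaling, so the exponent is the smallest $|n|$ for which a coefficient $G^{(n)}_\eta$ or $B^{(n)}_\eta$ survives, not a power of $\ra$ divided by three. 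The missing step is therefore to pass from \eqref{eqn:ApproxNF} to the resonant normal form of Proposition \ref{prop:normalFormNHIM}, note that $U_0^n=\ra^{3n_1+n_2+n_3}\tga^{n_1}\ba^{n_2}\da^{n_3}$, and use the absence in \eqref{eqn:ApproxNF} of resonant terms of low degree in $\ra$ to verify the vanishing hypotheses of Lemma \ref{prop:invDOrder} (the paper checks them with $m_1=2$, $m_2=3$); only then do the cited results return errors of the stated form. Relatedly, your appeal to the approximate integrals $L_1,L_2,\kappa$ of Remark \ref{rem:integrals} to produce the cancellation in the $u$-components is a mis-attribution: those integrals are consumed in constructing \eqref{eqn:ApproxNF} itself (and again for $T_{\tga}$ in Lemma \ref{lem:ApproxT}), whereas the cancellation along the Dulac transit comes from the fact that every unremovable term is a function of the near-invariant monomials $U_y,U_z,U_w$, which is exactly what Proposition \ref{prop:DirectionalDulacMaps} exploits. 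You are right that a naive Gronwall estimate fails (it gives an $O(1)$ drift in $u$), but the fix is the resonant-monomial structure, not the integrals.
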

There are several technical details needed to prove Lemma~\ref{lem:ApproxDulacMap}. 
The proof is given in Appendix~\ref{sec:proofOfLemma}.
		
\subsubsection{Smooth Transition $ T $}

In this section we show that $ T $ is of the form sketched in
\eqref{eqn:suggestedT}. As mentioned in Section \ref{sec:GeometricSketch}, it is
useful to split $ \Sigma_{int} $ into its various faces $ \Sigma_{\eta}^\pm,
\, \eta = y,z,w $. In doing so, $ T $ should also split over its action on the
various faces. Moreover, recall that, due to the fact that $ \mathcal{C} $ is a
M\"{o}bius bundle, $ T: \Sigma_{\eta}^\pm \to \Sigma_{\eta}^\mp $. For clarity,
denote each restriction by,
\[ T_\eta^\pm: \Sigma_{\eta}^\pm \to \Sigma_{\eta}^\mp,\quad \eta = \b,\d,\g. \]
			
We will focus on computing $ T_{\tga}: \Sigma_{\tga}^+ \to \Sigma_{\tga}^- $.
This can be computed from the $ \gamma $-directional blow-up. In the rotated
system $ \tilde{X}_9 $, given by \eqref{eqn:rotNormalForm}, the $ \gamma
$-directional blow-up produces the vector field $ \tilde{X}_\g $, given as,
\begin{equation}\label{eqn:Xtilgamma}
  \begin{aligned}
    \rg^\prime	&= -\rg(2\ag + \frac{1}{2}(\bg^2-\dg^2)) +\rg^3 \tilde{R}_3^4(\ag,\bg,1,\dg,h)+ \rg^5\tilde{R}_3^{6}(\ag,\bg,1,\dg,h) + O(\rg^7)  \\
    \ag^\prime	&=  1 + 3\ag^2 + \frac{1}{2}(\bg^2+\dg^2)+ \frac{1}{2}\ag(\bg^2-\dg^2)+ \rg^3\bar{R}_{1,\g}^{4}(\ag,\bg,\dg,h) + \rg^5\bar{R}_{1,\g}^{6}(\ag,\bg,\dg,h) + O(\rg^7)\\
    \bg^\prime	&= -\bg\rg^{-1}\rg^\prime + O(\rg^7) \\
    \dg^\prime	&= -\dg\rg^{-1}\rg^\prime + O(\rg^7)\\
    h_1^\prime	&= b_c \rg^8 \tilde{R}_5^{9}(\ag,\bg,1,\dg) +O(\rg^9) \\
    h_2^\prime	&= -b_c \rg^8 \tilde{R}_5^{9}(\ag,\bg,1,\dg,) + O(\rg^9) \\
    x^\prime &= y^\prime = \Gamma_1^\prime = \Gamma_2^\prime = 0 + O(\rg^9)
  \end{aligned}
\end{equation}
with
\begin{equation}
  \bar{R}_{1,\g}^j (\ag,\bg,\dg,h) := \tilde{R}_1^j(\ag,\bg,1,\dg,h) - \ag \tilde{R}_3^j(\ag,\bg,1,\dg,h).
\end{equation}

To aid in computing $T_\tga$, we will first use the integral $\kappa$ to
simplify \eqref{eqn:Xtilgamma}. In the $U_\gamma$ chart we have $\kappa$ given by,
\begin{equation}
    \kappa_\gamma = \rg \left((2 + 6 \ag^2 + 3 \bg^2 + 3 \dg^2 + 3 \ag \left(\bg^2 - \dg^2\right) \right)+ \ra^2 \tilde{G}^5_\gamma(\ag,\bg,\dg) +
      \ra^4\tilde{G}^7_\gamma(\ag,\bg,\dg) + O(\ra^{7}),
\end{equation}
where $ \tilde{G}^i_\gamma $ is $\tilde{G}^i$ in the $U_\gamma$ chart. 

Introduce the coordinate $\trg$ through 
\begin{equation}
        \trg = \kappa_\gamma^{1/3}.
\end{equation}
Then $\trg$ will satisfy,
\begin{equation}\label{eqn:trgDiffEq}
    \trg^\prime = 0 + O(\trg^7).
\end{equation}

Using $\trg$ and the approximate integrals $L_1 = \rg \bg, L_2 = \rg \dg$ we are able to prove the following lemma giving the transition map $T_{\tga}$.
\begin{lemma}\label{lem:ApproxT}
  Take $ u=h_1,h_2,x,y,\Gamma_1,\Gamma_2 $ and let $
  ({\ra}_1,{\ba}_1,{\da}_1,u_1),\ (-{\ra}_2,-{\ba}_2,-{\da}_2,u_2) $ be
  coordinates on $ \Sigma_{\tga}^+, \Sigma_{\tga}^- $ respectively. Then the
  transition map $ T_{\tga}:\Sigma_{\tga}^+ \to \Sigma_{\tga}^- $ is,
  \begin{equation}\label{eqn:Tina}
    T_{\tga} : \quad
    \begin{aligned}
      {\ra}_2  		&= {\ra}_1 + O({\ra}_1^7) \\
      {\ba}_2 &= {\ba}_1 + O({\ra}_1^7)
    \end{aligned}
    \quad 
    \begin{aligned}
      {\da}_2		&= {\da}_1 + O({\ra}_1^7) \\
      u_2 &= u_1 + O({\ra}_1^8).
    \end{aligned}
  \end{equation}
\end{lemma}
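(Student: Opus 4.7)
The plan is to exploit the approximate integrals of $\tilde{X}_\gamma$ identified in the preceding discussion, namely $\trg$, $L_1 = \rg\bg$, $L_2 = \rg\dg$, together with the slow evolution of the $u$-components, to conclude that the transition $T_{\tga}$ is essentially the identity map to the orders claimed. The key observation is that $T_{\tga}$ connects the sections $\Sigma_{\tga}^+$ and $\Sigma_{\tga}^-$ through a region where the flow is regular (bounded away from $\NHIM$), so the change in each quasi-integral over the transition is controlled by the product of the size of its time derivative with the bounded time of flight.

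First I would pass to the $\gamma$-chart, which covers a neighbourhood of the flow between $\Sigma_{\tga}^+$ and $\Sigma_{\tga}^-$. Using $\tga = 1/\ag + O(\ra^2)$, the sections correspond to $\ag = \pm 1$ to leading order, with the sign of $\ra$ flipping across the collision manifold according to the M\"obius structure of $\mathcal{C}$ (so that the coordinates on $\Sigma_{\tga}^-$ are $(-\ra_2,-\ba_2,-\da_2,u_2)$, consistent with $L_j$ and $\kappa$ retaining their sign). The equation $\ag^\prime = 1 + 3\ag^2 + \tfrac12(\bg^2+\dg^2) + \dots$ is uniformly positive in a neighbourhood of the traversed arc, so the rescaled fictitious time required to pass from $\ag = 1$ to $\ag = -1$ (through infinity, identified in $\RP^3$) is bounded.

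Next I would quote the conservation properties directly from the structure of $\tilde{X}_\gamma$: by \eqref{eqn:trgDiffEq}, $\trg^\prime = O(\trg^7)$; from $L_1^\prime = L_2^\prime = 0$ in the normal form \eqref{eqn:rotNormalForm} (equivalently from $(\rg\bg)^\prime$ and $(\rg\dg)^\prime$ vanishing identically up to the order computed), $L_1$ and $L_2$ are preserved; and from \eqref{eqn:Xtilgamma}, $h_j^\prime = O(\rg^8)$ while $x^\prime = y^\prime = \Gamma_j^\prime = O(\rg^9)$. Integrating these along the flow of bounded duration yields $\trg_2 = \trg_1 + O(\trg_1^7)$, $L_j(\Sigma_{\tga}^-) = L_j(\Sigma_{\tga}^+)$, and $u_2 = u_1 + O(\rg_1^8)$. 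Since on $\Sigma_{\tga}^+$ we have $\ag = 1 + O(\ra^2)$, so $\rg_1 = \ra_1(1 + O(\ra_1^2))$, these errors transfer to $O(\ra_1^7)$ and $O(\ra_1^8)$ respectively.

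Finally, I would translate back to the $\alpha$-chart by solving the algebraic system imposed by conservation: $L_1 = \ra\ba$ and $L_2 = \ra\da$ give $\ra_1\ba_1 = \ra_2\ba_2$ and $\ra_1\da_1 = \ra_2\da_2$; the invariance of $\kappa$, combined with the fact that $\kappa_\alpha = 6\ra^3\tga + O(\ra^5) = \pm 6\ra^3 + O(\ra^5)$ on $\Sigma_{\tga}^\pm$ (with signs matched by the M\"obius orientation flip), gives $\ra_1 = \ra_2 + O(\ra_1^7)$. Substituting back then produces $\ba_2 = \ba_1 + O(\ra_1^7)$ and $\da_2 = \da_1 + O(\ra_1^7)$, yielding exactly \eqref{eqn:Tina}. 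The main obstacle is a bookkeeping one: ensuring that the $O(\trg^7)$ error in the $\trg$-conservation translates cleanly to $O(\ra_1^7)$ after accounting for the relationship $\trg \sim \rg \sim \ra$ on the entry section, and verifying that the extraneous degree-$4$ and degree-$6$ terms truly drop out because they have been absorbed into the definition of $\trg$ via the integral $\kappa$; any residual polynomial contribution must be checked to appear only at order $\rg^7$ or higher before the final composition with $D_1, D_2$ determines the block-map regularity.
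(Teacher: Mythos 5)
Your proposal is correct and follows essentially the same route as the paper's proof: pass to the $\gamma$-chart, use $\trg = \kappa_\gamma^{1/3}$ with $\trg^\prime = O(\trg^7)$ to propagate $\ra$ across the homoclinic transition, use the approximate integrals $L_1 = \rg\bg$, $L_2 = \rg\dg$ for $\ba,\da$, and the near-invariance of the $u$-components in \eqref{eqn:Xtilgamma}. The ``bookkeeping obstacle'' you flag at the end resolves exactly as you suspect: since $\tga$ is \emph{defined} by $\kappa = 6\ra^3\tga$ in \eqref{eq:normalformtraf}, the relation $\kappa = \pm 6\ra^3$ on $\Sigma_{\tga}^{\pm}$ holds exactly (not merely up to the $O(\ra^5)$ you write in your penultimate step, which taken literally would only yield $\ra_2 = \ra_1 + O(\ra_1^3)$), so conservation of $\kappa^{1/3}$ to $O(\ra^7)$ transfers directly to $\ra_2 = \ra_1 + O(\ra_1^7)$.
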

\begin{proof}
  The transition $T_\tga$ in the lemma is given in the coordinates $\eta_\alpha$. First we compute the transition in the $\eta_\gamma$ coordinates before converting to the $\eta_\alpha$ coordinates. Let $\tilde{\Sigma}_\tga^+,\tilde{\Sigma}_\tga^+$ be the respective images of $\Sigma_\tga^+,\tilde{\Sigma}_\tga^-$ in the $U_\gamma$ chart and let $({\rg}_1,{\bg}_1,{\dg}_1,u_1)$, $(-{\rg}_2,-{\bg}_2,-{\dg}_2,u_2) $ be respective coordinates on these sections. 
  
  From \eqref{eqn:trgDiffEq} we can conclude that,
  \[ \tilde{r}_{\gamma 2} = \tilde{r}_{\gamma 1} + O({\rg}_1^7) \implies \kappa_1^{1/3} = \kappa_2^{1/3} + O({\rg}_1^7), \]
  where $\tilde{r}_{\gamma i},\kappa_i$ are considered as functions of $ ({\rg}_i,{\bg}_i,{\dg}_i) $. From \eqref{eq:normalformtraf} we have that,
  \[ \kappa_2^{1/3} = \kappa_1^{1/3} + O({\rg}_1^7) \implies {\ra}_2 \tilde{\gamma}_{\alpha 2}^{1/3} = -{\ra}_1 \tilde{\gamma}_{\alpha 1}^{1/3} + O({\ra}_1^7) \implies {\ra}_2 = {\ra}_1 + O({\ra}_1^7),\]
  with the last implication a result of the fact that $\tilde{\gamma}_{\alpha 1} = 1,\tilde{\gamma}_{\alpha 2} = -1$ are the definitions of the sections $\Sigma_{\tga}^+,\Sigma_{\tga}^-$, respectively.
  
  Similarly, by using the fact that $L_1 = r_{\gamma 1} \beta_{\gamma 1}, L_2 = r_{\gamma 2} \beta_{\gamma 2}$ are approximate integrals to order $\rg^7$, we find that,
  \[ {\ba}_2 = {\ba}_1 + O({\ra}_1^7),\qquad {\da}_2 = {\da}_1 + O({\ra}_1^7).\]
  The equations for $u_i$ are also immediate from their approximate invariance in \eqref{eqn:Xtilgamma}.
\end{proof}
		
\subsubsection{Proof of Theorem \ref{thm:C2Regularisation}}
		
Now that the directional Dulac maps and the smooth transition have been
approximated in lemmas \ref{lem:ApproxDulacMap} and \ref{lem:ApproxT}
respectively, the following main theorem can be proved.
\begin{thm}\label{thm:C2Regularisation}
  For any choice of masses, the simultaneous binary collision is precisely $
  C^{8/3} $-regularisable in the planar $ 4 $-body problem.
\end{thm}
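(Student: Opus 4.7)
The plan is to combine Lemma \ref{lem:ApproxDulacMap} and Lemma \ref{lem:ApproxT} into an asymptotic series for $\pib$ in a small parameter $\e$ that rescales the initial section coordinates on $\Sigma_0$. Following the geometric decomposition in Section \ref{sec:GeometricSketch}, I would write $\pib = D_2 \circ T \circ D_1$ and split $\Sigma_{int}$ into its six faces $\Sigma_\eta^\pm$ with $\eta \in \{\ba,\tga,\da\}$. Since the non-orientability of $\mathcal{C}$ forces $T_\eta^\pm$ to send $\Sigma_\eta^\pm$ to $\Sigma_\eta^\mp$, the block map decomposes as $\pib = \bigcup_{\eta,\pm}\pib_\eta^\pm$ with $\pib_\eta^\pm := D_{2,\eta}^\mp \circ T_\eta^\pm \circ D_{1,\eta}^\pm$, and each piece can be analysed independently.

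Next I would rescale the $\Sigma_0$ coordinates by $\e$ via $(\b_0,\g_0,\d_0) \mapsto \e(\b_0,\g_0,\d_0)$ and focus on $\pib_\tga^+$, which is the only piece capable of producing fractional exponents because the eigenvalue ratio $1:3$ attached to the $\tga$-direction is the only one differing from $1:1$. Directly chaining the three expansions from the lemmas produces
\begin{equation*}
\pib_\tga^+:\ \ \b_3 \sim \e\,\b_0,\ \ \g_3 \sim \e\,\g_0,\ \ \d_3 \sim \e\,\d_0,\ \ u_3 \sim u_0 + C(\Gamma_1^*,\Gamma_2^*)\,\e^{8/3}\,\g_0^{8/3} + \ldots,
\end{equation*}
where the $\e^{8/3}$ term arises from the $O(\ra_1^8)$ correction in $T_\tga$ evaluated at $\ra_1 \sim \e^{1/3}\g_0^{1/3}$ delivered by $D_{1,\tga}^+$, and where the coefficient $C$ is inherited from the resonant polynomial $R_5^9$ in Proposition \ref{prop:normalformforSBC}. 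The analogous compositions $\pib_\ba^\pm$ and $\pib_\da^\pm$ yield only integer powers of $\e$ because their Dulac maps are monomial with unit exponents in the relevant coordinates. In the rescaled variable $\tilde\g := \e\g_0$ the component $u_3$ becomes $u_0 + C\,\tilde\g^{8/3} + \ldots$, and since $x \mapsto x^{8/3}$ lies in $C^{2,2/3}$ but not in $C^3$, this establishes the lower bound $C^{8/3}$.

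The main obstacle is showing this bound is sharp, i.e.\ that no smooth change of coordinates on the sections can absorb the $\e^{8/3}\g_0^{8/3}$ term, so that the block map is \emph{precisely} $C^{8/3}$ and not smoother. For this I would invoke Corollary \ref{cor:noFoliatation}: the inability to construct a smooth invariant foliation normal to $\sbc$ means the resonant polynomial $R_5^9$ at order $9$ cannot be removed by any smooth coordinate change in a tubular neighbourhood. The explicit form of $R_5^9$ recorded in \eqref{eqn:R59} shows it does not vanish identically for generic $(\Gamma_1^*,\Gamma_2^*)$. Since the collinear problem sits inside the planar problem and its block map has been shown to be precisely $C^{8/3}$ in \cite{duignanC83regularisationSimultaneous2020}, any putative $C^k$ extension of the planar block map with $k > 8/3$ would restrict to a $C^k$ extension of the collinear block map, contradicting the collinear sharpness. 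Combining the matching upper and lower bounds completes the proof.
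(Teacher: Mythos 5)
Your proposal follows essentially the same route as the paper: compose $D_{2,\tga}^-\circ T_{\tga}\circ D_{1,\tga}^+$ using Lemmas \ref{lem:ApproxDulacMap} and \ref{lem:ApproxT} to get $u_3 = u_0 + O(\e^{8/3})$ (hence at least $C^{8/3}$), and obtain sharpness from the embedded collinear problem. The only caveat is that your explicit leading term $C(\Gamma_1^*,\Gamma_2^*)\,\e^{8/3}\g_0^{8/3}$ is not actually delivered by Lemma \ref{lem:ApproxT}, which only provides the bound $O(\ra_1^8)$ rather than a computed coefficient; this does not harm your argument since sharpness rests on the collinear restriction, but you should also note (as the paper does) that the vanishing of the second derivative on $\tga>0$ and $\tga<0$ extends to $\tga=0$ by continuity.
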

\begin{proof}
  It must be shown that the block map $\pib$ is at least, and at most, $C^{8/3}$
  regular. In \cite{duignanC83regularisationSimultaneous2020} it was shown that the collinear problem is
  precisely $C^{8/3}$ regularisable. The collinear problem naturally embeds
  inside the planar problem. Hence, there is a submanifold, the collinear
  submanifold, for which the restriction of $\pib$ is known to be $C^{8/3}$
  regularisable. This automatically gives that the full map $\pib$ is at best
  $C^{8/3}$. However, the question remains whether it is of worse regularity
  away from collinearity.

  In what follows, let $ u = (h_1,h_2,x,y,\Gamma_1,\Gamma_2) $. We will collect
  the various results of the preceding section for the proof.
				
  Take the two sections,
  \[ \Sigma_{0} = (1,{\ba}_0,{\ga}_0,{\da}_0,u_0), \qquad \Sigma_{3} =
    (-1,-{\ba}_3,-{\ga}_3,-{\da}_3,u_3) \] defined in the $ \alpha $-directional
  blow-up. Note that $ \Sigma_0,\Sigma_3 $ is transverse to $ \mathcal{E}^+ $
  and $ \mathcal{E}^- $ respectively. Further, define $ \tga $ through equation
  \eqref{eq:normalformtraf} and consider the two sections, transverse to $ \mathcal{C} =
  \{\ra = 0 \} $,
  \[ \Sigma_{\tga}^+ = \{({\ra}_1,{\ba}_1,{\tga},{\da}_1,u_1) | \tga = 1 \},
    \qquad \Sigma_{\tga}^- = \{(-{\ra}_2,-{\ba}_2,{\tga},-{\da}_2,u_2) | \tga =
    -1 \}. \]
				
  The section of the block map with $ \tga > 0 $ is then given by,
  \begin{align*}
    \pib_{\g}^+ &= D_{2,\tga}^- \circ T_{\tga} \circ D_{1,\tga}^+,\\
    D_{1,\tga}^+:\Sigma_{0}\to\Sigma_{\tga}^+ ,\quad &T_{\tga}:\Sigma_{\tga}^+
    \to \Sigma_{\tga}^-,\quad D_{2,\tga}^-: \Sigma_{\tga}^-\to \Sigma_{3}. 
  \end{align*}
  Lemma \ref{lem:ApproxDulacMap} gives the asymptotic expansion of $
  D_{2,\tga}^-, D_{1,\tga}^+ $ whilst Lemma \ref{lem:ApproxT} gives the
  asymptotic expansion of $ T_{\tga} $. Composing the maps and keeping track of
  the known error, we have that,
  \begin{align*}
    \pib_{\g}^+ &= D_{2,\tga}^- \circ T_{\tga} \circ D_{1,\tga}^+(\e{\ba}_0,\e{\ga}_0,\e{\da}_0,u_0) \\
                &= D_{2,\tga}^- \circ T_{\tga}\left(
                  \e^{1/3} {\g}_0^{1/3} \left(1 + O(\e^{3}\ln\e) \right),\ 
                  \e^{2/3} {\g}_0^{-1/3} {\b}_0\left(1 + O(\e^{3}\ln\e)\right), \right.\\
                &\qquad\qquad \left.
                  \e^{2/3} {\g}_0^{-1/3} {\d}_0 \left(1 + O(\e^{3}\ln\e) \right),\
                  u_0 + O(\e^{3}\ln\e)
                  \right) \\
                &= D_{2,\tga}^- \left(
                  \e^{1/3} {\g}_0^{1/3} \left(1 + O(\e^{3}\ln\e) + O(\e^{8/3}) \right),\ 
                  \e^{2/3} {\g}_0^{-1/3} {\b}_0\left(1 + O(\e^{2}\ln\e) + O(\e^{8/3})\right), \right.\\
                &\qquad\qquad \left.
                  \e^{2/3} {\g}_0^{-1/3} {\d}_0 \left(1 + O(\e^{3}\ln\e) + O(\e^{7/3}) \right),\
                  u_0 + O(\e^{3}\ln\e) + O(\e^{8/3})
                  \right) \\
                &= \left( \e {\ba}_0 + O(\e^{3}),\ \e {\ga}_0 + O(\e^{3}),\ \e {\da}_0 + O(\e^{3}),\ {u}_0 + O(\e^{8/3}) \right).
  \end{align*}
  Now, the flatness $ O(\epsilon^{8/3}) $ implies the flatness of the same order
  in the variables. Hence $ \pib $ is at least $ C^{8/3} $ on $ \ga > 0 $ with 
  second derivatives exactly $ 0 $.
				
  To finish the proof that $\pib$ is at least $C^{8/3}$ we note that by swapping
  signs in the calculations of the preceding section, one can quickly identify
  that $ \pib $ is $ C^{8/3} $ on $ \tga < 0 $ also, again with the second
  derivative vanishing. Finally, from Theorem \ref{thm:C0Regularisable} the block
  map $ \pib $ is continuous. It follows that, if the second derivative is $ 0 $
  in all directions $ \ga \neq 0 $, then so it must also be on $ \ga = 0 $.
  Hence $ \pib $ is at least $ C^{8/3} $ even away from collinearity.
\end{proof}

\section{Concluding Remarks}
  If we keep track of the irremovable terms $R_5$ in the $h_j$ components of the
  normal form $X_9$ then ultimately they produce non-zero terms at
  order $\e^{8/3}$ of the $h_j$ components of the block map $\pib$. That is
  $h_{j3}$ has the form,
  \begin{equation}
    h_{j3} = h_{j0} + F_j({\ba}_0,{\ga}_0,{\da}_0; \Gamma_1^*,\Gamma_2^*) \e^{8/3} + O(\e^3)
  \end{equation}
  with $F_j$ a $C^{8/3}$ function in ${\ba}_0,{\ga}_0,{\da}_0$ and polynomial in
  $\Gamma^*_1,\Gamma_2^*$. If the function $F_j$ is zero for a given set of
  values $({\ba}_0^*,{\ga}_0^*,{\da}_0^*,\Gamma_1^*,\Gamma_2^*)$ the block map $\pib$
  will be more than $C^{8/3}$ regular in the direction
  $[{\ba}_0^*,{\ga}_0^*,{\da}_0^*]$ at the point $\Gamma_1^*,\Gamma_2^*$ along
  $\NHIM$.
  
  For the collinear problem, $F_j$ has been explicitly computed  \cite{duignanC83regularisationSimultaneous2020}.
  This was done by blowing up and explicitly computing the smooth transition map to order $8$ (rather than order 7 as done here) through the use of the variational equations.
  This involves solving a non-autonomous first order linear differential equation with coefficients dependent 
  on the trajectories on the collision manifold and the coefficients of the vector field near the collision manifold. In the planar problem 
  the trajectories on the collision manifold are significantly more complicated
  and worse so, the coefficients in the vector field as computed from the normal form are far too unwieldy. 
  This is why in this paper the error terms are one order less than in the collinear paper \cite{duignanC83regularisationSimultaneous2020}.

  The fact that $F_j$ is polynomial in $\Gamma_1^*,\Gamma_2^*$ and, due to the
  fact $\pib$ is $ C^{8/3}$, $F_j$ is not identically zero, it is guaranteed that
  the set of exceptional points is of measure 0. The interesting question of
  whether this set is empty remains. In particular, there are two questions
  worth pursuing:
  \begin{enumerate}
  \item Is there a choice of asymptotic point $\Gamma_1^*,\Gamma_2^*$ for which
    $F_j=0$?
  \item Is there a choice of ${\ba}_0,{\ga}_0,{\da}_0$ and a choice of
    $\Gamma_1^*,\Gamma_2^*$ for which $F_j=0$?
  \end{enumerate}
  Of course question 2 is a weaker question than 1. An answer to either would
  provide regions of phase space for which the simultaneous binary collision is
  more regularisable, perhaps even $C^\infty$. In such a case, by finding
  sub-problems, like the trapezoidal, caledonian, collinear, etc, which are
  contained within these regions, one may be able to find a Levi-Civita type
  transformation for these sub-problems which smoothly regularise the collision.

	\bibliography{SBC_Planar}
	\bibliographystyle{hplain}	
	
	\appendix
	\section{Potential and Normal Form Functions}\label{sec:Constants}

The mass constants in \eqref{eqn:potential} are given by
\begin{equation}\label{eq:potentialconstants}
\begin{array}{rclrcl}
    b_0  &=& (m_1 + m_2)(m_3 + m_4), \\
    b_{12} &=& b_0 \frac{m_1 m_2}{8 (m_1 + m_2)^2}, &
    b_{22} &=& b_0 \frac{m_3 m_4}{8 (m_3 + m_4)^2}, \\
    b_{13} &=& b_0 \frac{m_1 m_2( m_1 - m_2)}{16(m_1 + m_2)^3}, &
    b_{23} &=& b_0 \frac{m_3 m_4(m_3 - m_4)}{16(m_3 + m_4)^3}, \\
    b_{14} &=& b_0 \frac{m_1 m_2(m_1^2 - m_1 m_2 + m_2^2)}{128(m_1 + m_2)^4}, &
    b_{24} &=& b_0 \frac{m_3 m_4(m_3^2 - m_3 m_4 + m_4^2)}{128(m_3 + m_4)^4}, \\
    b_c &=& \frac{3}{64} M_1 M_2
\end{array}
\end{equation}

Each of the homogeneous polynomials $W_j$ are given by,
\begin{equation}
  \label{eq:hompolys}
  \begin{aligned}
    W_2(Q) &= 3 \bar{Q}^2+2\bar{Q} Q + 3 Q^2 \\
    W_3(Q) &= 5\bar{Q}^3 + 3 \bar{Q}^2 Q + 3 \bar{Q}Q^2 + 5 Q^3 = (Q + \bar{Q})(5 Q^2 - 2 Q \bar{Q} + 5 \bar{Q}^2)\\
    W_4(Q) &= 35 \bar{Q}^4+20 \bar{Q}^3Q+18 \bar{Q}^2 Q^2+ 20 \bar{Q} Q^3 + 35 Q^4 \\
    W_c(Q_1,Q_2) &= 35 \bar{Q}_1^2 \bar{Q}_2^2 + 10 \bar{Q}_1 \bar{Q}_2^2 Q_1 +
    3 \bar{Q}_2^2 Q_1^2 + 10 \bar{Q}_1^2 \bar{Q}_2 Q_2 + 12 \bar{Q}_1 \bar{Q}_2
    Q_1 Q_2 \\
    &\qquad + 10 \bar{Q}_2 Q_1^2 Q_2 + 3 \bar{Q}_1^2 Q_2^2 + 10 \bar{Q}_1 Q_1
    Q_2^2 + 35 Q_1^2 Q_2^2
  \end{aligned}
\end{equation}

		The normal form functions are given by
		\begin{equation}
		\begin{aligned}
      R_1^{4}(\zeta_1,\zeta_2, h_1, h_2) &= \tfrac{4}{5}L_2^2 \left( (h_1+h_1^*)L_1^2 - (h_2+h_2^*)L_2^2 \right) \\
      R_1^{6}(\zeta_1,\zeta_2,h_1,h_2) &= -\tfrac{16}{25}L_1^2L_2^4(h_1+h_1^*)(h_2+h_2^*) - \tfrac{4}{107925} (h_1+h_1^*)^2 R_{11}^6 + \tfrac{4}{107925}(h_2+h_2^*) R_{12}^6 \\
      R_{11}^{6}(\zeta_1,\zeta_2,h_1,h_2) &= 33300 I_1^4 I_2^2 - 21645 I_1 I_2^5
      + 34965 I_1^4 L_2^2 + 11285 I_1^2 L_1^2 L_2^2 \\
      &\qquad - 23026 L_1^4 L_2^2 - 59385 I_1 I_2^3 L_2^2 - 46990 I_1 I_2 L_2^4 \\
      R_{12}^{6}(\zeta_1,\zeta_2,h_1,h_2) &= 6105 I_1^6 + 34965 I_1^4 L_1^2 +
      46990 I_1^2 L_1^4 + 18130 L_1^6\\
      &\qquad + 5550 I_1^3 I_2^3 - 10545 I_1 L_1^2 I_2^3 - 5550 I_2^6 - 11285
      I_1 L_1^2 I_2 L_2^2 + 78972 L_2^6
		\end{aligned}
		\end{equation}

Writing $\alpha_i = 4 \arg( \Gamma_i)$ the first few terms in $R_5^9$ are
\begin{equation} \label{eqn:R59}
    R_5^9 = \frac{8}{19} (I_1 - I_2) ( I_1^2 + I_1 I_2 + I_2^2) ( I_1^6 - 11 I_1^3I_2^3 + I_2^6) 
     ( 6 + 10 \cos \alpha_1 + 10 \cos \alpha_2 + 35 \cos (\alpha_1 + \alpha_2) + 3 \cos (\alpha_1 - \alpha_2)) +
\end{equation}
\[
     - \frac{2}{209} I_1 L_1^2 (5 I_1^6 - 35 I_1^3 I_2^3 + 14 I_2^6)
      ( 109 (10 \cos\alpha_1 + 35 \cos(\alpha_1 + \alpha_2) + 3 \cos(\alpha_1 - \alpha_2)) - 86 (3 + 5 \cos\alpha_2 ) )
      + \{ 1 \leftrightarrow 2 \} +
\]
\[ 
   +  \frac{256}{13} L_1^3( I_1^6 - 5 I_1^3 I_2^3 + I_2^6 )L_1^3 (10 \sin\alpha_1 + 35 \sin(\alpha_1 + \alpha_2) + 3 \sin(\alpha_1 - \alpha_2)  - \{ 1 \leftrightarrow 2 \} )
\]
plus higher order terms in $L_i$, 
where $ \{ 1 \leftrightarrow 2 \} $ denotes the exchange of indices in the preceding term. 
    
    The relevant functions for the integral $\kappa$ in equation
    \eqref{eqn:kappaOriginal} are given by,
    \begin{equation}
      \label{eq:kappaFunctions}
      \begin{aligned}
        G^5(\zeta_1,\zeta_2,h_1,h_2) &= \-\tfrac45(I_1^3 + 6 I_1 L_1^2 - I_2^3)((h_1 + h_1^*)L_1^2 - (h_2+h_2^*)L_2^2)\\
        G^7(\zeta_1,\zeta_2,h_1,h_2) &= (h_1+h_1^*)^2
        \left(  G_1^7(\zeta_1,\zeta_2) +\tfrac12 G_2^7(\zeta_1,\zeta_2)\right) +(h_1+h_1^*)(h_2+h_2^*) G_3^7(\zeta_1,\zeta_2)\\
        &\qquad - (h_2+h_2^*)^2\left(  G_1^7(\zeta_2,\zeta_1) -\tfrac12 G_2^7(\zeta_2,\zeta_1)\right)  \\
        G_1^7(\zeta_1,\zeta_2) &= -2937060 I_1^4 L_1^2 I_2 - 3947160 I_1^2 L_1^4
        I_2
        - 1522920 L_1^6 I_2 + 1107225 I_1^3 I_2^4 + 3181815 I_1 L_1^2 I_2^4 \\
        &\qquad - 807525 I_2^7 + 2447550 I_1^3 I_2^2 L_2^2 + 6394710 I_1 L_1^2
        I_2^2 L_2^2 -
        2692305 I_2^5 L_2^2 \\
        &\qquad - 944468 I_1^3 L_2^4 - 899220 I_1 L_1^2 L_2^4 - 1503082 I_2^3
        L_2^4 - 3800244 I_2 L_2^6 \\
        G_2^7 &= 56 (18315 I_1^6 I_2 - 27973 I_1^3 L_2^4 - 32115 I_1 L_1^2 L_2^4
        +
        27973 I_2^3 L_2^4 + 135723 I_2 L_2^6) \\
        G_3^7(\zeta_1,\zeta_2) &= -\tfrac{16}{25}L_1^2 (I_1^3 + 12 I_1 L_1^2 -
        I_2^3) L_2^2
      \end{aligned}
    \end{equation}
		
	\section{Proof of Lemma \ref{lem:ApproxDulacMap}} \label{sec:proofOfLemma}
		In this section we state and use the required technicalities from \cite{duignanNormalFormsManifolds} to prove Lemma \ref{lem:ApproxDulacMap}. The theory developed in \cite{duignanNormalFormsManifolds} is more general than what is needed in this appendix. As such, all referenced propositions have been modified for their particular application in this work.
		
		Assume that the one-dimensional invariant manifold leaving a point on $ \NHIM $ in the normal direction is stable. By Proposition \ref{prop:structureOfCollisionManifold}, the non-zero eigenvalues of $ \NHIM $ are $ (-1,3,1,1) $ up to scaling. Assume that coordinates $ (x,y,z,w,u)\in\R^4\times\R^8 $ have been chosen near $ \NHIM $ so that the local stable manifold is given by $ y=z=w=0 $ and the local unstable manifold by $ x = 0 $. Due to the fact the non-zero eigenvalues of $ \NHIM $ are resonant, it is not, in general, possible to linearise the vector field in a tubular neighbourhood of $ \NHIM $. The following proposition from \cite{duignanNormalFormsManifolds} gives the `simplest' representation of a vector field in a neighbourhood of $ \NHIM $. This is the so called normal form.
		
		\begin{proposition}[\cite{duignanNormalFormsManifolds}]\label{prop:normalFormNHIM}
			Define the monomials, 
			\[ U_y = x^3 y,\quad U_z = x z,\quad U_w = x w \]
			and let  $ n = (n_1,n_2,n_3)\in\N^3 $ and $ |n|:=n_1+n_2+n_3 $. There exists a smooth, near-identity transformation $ \Phi $ and a smooth time rescaling bringing $ X $ into the normal form $ X_N $,
			\begin{equation}\label{eqn:generalNormalForm}
				\begin{aligned}
				\dot{x}		&= -x \\
				\dot{y}		&=  3 y + y \sum_{|n| \geq 1} G_y^{(n)}(u) U_y^{n_1} U_z^{n_2} U_w^{n_3} + y U_y^{-1}  \sum_{n_1+n_2 \geq 3} B_y^{(n)}(u) U_z^{n_1} U_w^{n_2} \\ 
				\dot{z}		&= z + z \sum_{|n| \geq 1} G_z^{(n)}(u) U_y^{n_1} U_z^{n_2} U_w^{n_3} + z U_z^{-1} \sum_{n_1+n_2 \geq 1} B_z^{(n)}(u) U_y^{n_1} U_w^{n_2} \\ 
				\dot{w}		&= w + w \sum_{|n| \geq 1}  G_w^{(n)}(u) U_y^{n_1} U_z^{n_2} U_w^{n_3} + w U_w^{-1} \sum_{n_1+n_2 \geq 1} B_w^{(n)}(u) U_y^{n_1} U_z^{n_2} \\
				\dot{u}		&= \sum_{|n| \geq 1}  G_u^{(n)}(u)  U_y^{n_1} U_z^{n_2} U_w^{n_3}, \\
				\end{aligned}
			\end{equation}
			where $ G^{(n)}_\eta(u), B^{(n)}_\eta(u) $ are smooth functions in $ u $ for each $ \eta = x,y,z,w,u $.
		\end{proposition}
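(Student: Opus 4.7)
The plan is to apply the general normal form theorem for normally hyperbolic invariant manifolds in \cite{duignanNormalFormsManifolds}, specializing to the eigenvalue configuration $(-1,3,1,1)$. First I would compute the linearization on the normal bundle: $X_L = -x\partial_x + 3y\partial_y + z\partial_z + w\partial_w$, with $\dot u = 0$ on the tangent space to $\NHIM$. A monomial vector field $x^{\beta_1}y^{\beta_2}z^{\beta_3}w^{\beta_4} f(u)\,\partial_\eta$ lies in $\ker[X_L,\cdot]$ if and only if $-\beta_1 + 3\beta_2 + \beta_3 + \beta_4 = \lambda_\eta$, where $\lambda_x = -1$, $\lambda_y = 3$, $\lambda_z = \lambda_w = 1$, and $\lambda_u = 0$. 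The smoothness-in-$u$ follows because $u$ sits in the kernel of $\operatorname{ad}_{X_L}$ on polynomial coefficients.

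Next I would classify the resonant monomials explicitly. The fundamental invariants of the linear flow, i.e.\ the monomials with $\sum \beta_j \lambda_j = 0$, are generated multiplicatively by $U_y = x^3 y$, $U_z = xz$ and $U_w = xw$. Hence every resonant term for $\partial_x$ is $x$ times such an invariant. For $\partial_y$, resonant monomials must satisfy $\sum \beta_j \lambda_j = 3$. These split into two families: those in which $y$ divides the monomial, yielding the $y\sum G_y^{(n)} U_y^{n_1}U_z^{n_2}U_w^{n_3}$ summand; and those without a $y$ factor, which must contain $z^{a}w^{b}$ with $a + b \geq 3$ to generate the excess eigenvalue $3$ from $z,w$ alone. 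Multiplying and dividing by $y$ expresses the latter as $y U_y^{-1} B_y^{(n)}U_z^{n_1}U_w^{n_2}$ with $n_1 + n_2 \geq 3$, which is precisely the form in the statement. The analogous split for $\partial_z$ and $\partial_w$ (resonance $=1$) gives the $z U_z^{-1}$ and $w U_w^{-1}$ correction terms, where the constraint $n_1 + n_2 \geq 1$ arises because $y$ or $w$ alone already contributes eigenvalue $\geq 1$. For $\partial_u$, any resonant term must be purely polynomial in $(x,y,z,w)$ with $\sum \beta_j \lambda_j = 0$, giving exactly the $G_u^{(n)}U_y^{n_1}U_z^{n_2}U_w^{n_3}$ sum, and no constant term by the fixed-point condition on $\NHIM$.

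With the algebraic kernel identified, the main step is to invoke the smooth normal form theorem of \cite{duignanNormalFormsManifolds}, which iteratively constructs a near-identity diffeomorphism $\Phi$ and a smooth time rescaling killing all non-resonant terms at each order, and then patches the resulting formal series into a genuinely smooth transformation in a full tubular neighbourhood of $\NHIM$. The Sternberg/Bonckaert-type smoothness statement applies because the normal eigenvalues are all real, nonzero, and lie on a single side of $0$ after splitting into stable and unstable parts, so there are no small divisors beyond the resonant ones already kept in the normal form.

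The main obstacle I would expect is two-fold: first, verifying that the cohomological operator $\operatorname{ad}_{X_L}$ acts surjectively onto the non-resonant complement at every order, which requires a careful grading argument tracking how the $u$-dependence interacts with the normal grading (this is the substance of the NHIM extension in \cite{duignanNormalFormsManifolds}); and second, handling the $1{:}1$ degeneracy between the $z$- and $w$-eigenvalues, which in a pointwise normal form would produce a Jordan-type ambiguity but here is harmless because $\NHIM$ already provides a canonical splitting via the invariant foliation by strong stable/unstable leaves. Once these are in place, the explicit form \eqref{eqn:generalNormalForm} follows purely by reading off the resonant monomials classified above.
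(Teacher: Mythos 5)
The paper offers no proof of this proposition: it is imported verbatim from \cite{duignanNormalFormsManifolds}, so the only ``proof'' to compare against is a citation. Your proposal is the right way to fill that in, and your resonance bookkeeping for the eigenvalues $(-1,3,1,1)$ is correct and reproduces the stated form exactly: the invariant monomials are precisely the products $U_y^{n_1}U_z^{n_2}U_w^{n_3}$, the $y$-free branch of the $\dot y$-resonances forces $n_1+n_2\geq 3$ in the $B_y$ sum, and the analogous branches for $\dot z,\dot w$ force $n_1+n_2\geq 1$. Three justifications should be tightened, however. First, the time rescaling does not ``kill non-resonant terms'' (the conjugacy $\Phi$ does that); its specific job is to remove the resonant terms $x\,G_x^{(n)}U_y^{n_1}U_z^{n_2}U_w^{n_3}$ from the $\dot x$ equation, which cannot be removed by a coordinate change alone, so that $\dot x=-x$ holds exactly. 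Second, the claim that the normal eigenvalues ``lie on a single side of $0$'' is false --- $(-1,3,1,1)$ is a saddle, which is precisely why infinitely many resonances occur; the correct reason there are no small divisors is that the eigenvalues are integers, so every non-resonant combination $n\cdot\lambda-\lambda_\eta$ is a nonzero integer bounded away from zero. Third, the $1{:}1$ degeneracy between $z$ and $w$ is not resolved by a ``canonical splitting into strong stable/unstable leaves'' (no such splitting distinguishes two equal eigenvalues); it is simply a non-issue because the resonant linear cross-terms $w\partial_z$ and $z\partial_w$ are retained in the normal form as the $n=(0,1)$ terms of the $B_z$ and $B_w$ sums. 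None of these slips invalidates the argument, but as written they would not survive scrutiny of the NHIM normal form machinery you are invoking.
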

		
		Assume that $ (x,y,z,w,u) $ are the coordinates so that the vector field in a tubular neighbourhood of $ \NHIM $ is given by the normal form \eqref{eqn:generalNormalForm}. Consider the section $ \Sigma = [0,1] \times[-1,1]^3\times \R^8 $ defined in the normal form coordinates and its various faces,
		\[ \Sigma_x := \Sigma\cap \{x=1 \},\qquad \Sigma_\eta^\pm := \Sigma\cap\{\eta=\pm 1 \},,\quad \eta = y,z,w. \] 
		Denote the \emph{Dulac map} by the continuous map,
		\[ D: \Sigma_y^\pm \cup\Sigma_z^\pm\cup\Sigma_w^\pm  \to \Sigma_x^\pm, \]
		and its action restricted to each face by,
		\[ D_\eta^\pm : \Sigma_{\eta}^\pm \to \Sigma_x,\quad \eta = y,z,w. \]
		
		The following proposition, adapted from \cite{duignanNormalFormsManifolds}, gives the asymptotic properties of $ D $.
		
		\begin{proposition}[\cite{duignanNormalFormsManifolds}]\label{prop:DirectionalDulacMaps}
			Let $ (y_0,z_0,w_0,u_0), (x_1,y_1,z_1,w_1,u_1) $ be coordinates on $ \Sigma_x, \Sigma_{\eta},\eta = y,z,w $ respectively, and define 
			\[ U_y^0 = x_1^3 y_1,\quad U_z^0 = x_1 z_1,\quad U_w^0 = x_1 w_1,\quad U_0^{n} := \left(U_y^0\right)^{n_1} \left(U_z^0\right)^{n_2}\left(U_y^0\right)^{n_3} \]
			for $ n = (n_1,n_2,n_3)\in\N^3 $. Then the components of the Dulac map have the asymptotic series,
			\begin{equation}\label{eqn:DulacMapsForward}
				\begin{aligned}
					y_0	&\sim x_1^3 \left( y_1 + y_1 \sum_{|n| \geq 1} \bar{G}_y^{(n)}(u_1,\ln x_1) U_0^n + x_1^{-3}\sum_{n_1+n_2\geq 3} \bar{B}_y^{(n_1,n_2)}(u_1,\ln x_1) \left(U_z^0\right)^{n_1} \left(U_w^0\right)^{n_2}  \right) \\
					z_0 &\sim x_1 \left( z_1 + z_1 \sum_{|n| \geq 1} \bar{G}_z^{(n)}(u_1,\ln x_1) U_0^n + x_1^{-1}\sum_{n_1+n_2\geq 1} \bar{B}_z^{(n_1,n_2)}(u_1,\ln x_1) \left(U_y^0\right)^{n_1} \left(U_w^0\right)^{n_2}  \right) \\
					w_0 &\sim x_1 \left( w_1 + w_1 \sum_{|n| \geq 1} \bar{G}_w^{(n)}(u_1,\ln x_1) U_0^n + x_1^{-1}\sum_{n_1+n_2\geq 1} \bar{B}_w^{(n_1,n_2)}(u_1,\ln x_1) \left(U_y^0\right)^{n_1} \left(U_z^0\right)^{n_2}  \right)  \\
					u_0	&\sim u_1 + \sum_{|n| \geq 1} \bar{G}_u^{(n)}(u_1,\ln x_1) U_2^{n}
				\end{aligned}
			\end{equation}	
			with $ y_1,z_1,w_1 $ set to $ \pm 1 $ for $ D_y^\pm,D_z^\pm,D_w^\pm $ respectively. Each coefficient function $ K^{(n)} = \bar{G}^{(n)},\bar{B}^{(n)} $ are:
			\begin{enumerate}[(i)]
				\item Polynomial in $ \ln x_1 $ with vanishing constant term and smooth in $ u_1 $. 
				\item Polynomial in $ B_\eta^{(\tilde{n})}(u_1),G_\eta^{(\tilde{n})}(u_1), \eta = y,z,w,u, $ with vanishing constant term for all $ |\tilde{n}| \leq |n| $.
			\end{enumerate}
			Moreover, $ B^{(n)}=0 $ (resp. $ G^{(n)}_u = 0 $) for $ |n| \leq m $ if and only if $ \bar{B}^{(n)}=0 $ (resp. $ G^{(n)}_u = 0 $) for $ |n| \leq m $.
		\end{proposition}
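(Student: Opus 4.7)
The plan is to exploit the structure of the normal form in Proposition~\ref{prop:normalFormNHIM} to reduce the Dulac map to an essentially autonomous integration, and then invert the resulting flow. The key observation is that although the system is resonant in the full coordinates $(x,y,z,w,u)$, the resonant monomials $U_y = x^3 y$, $U_z = xz$, $U_w = xw$ together with the centre coordinate $u$ satisfy a closed autonomous subsystem. Direct computation using \eqref{eqn:generalNormalForm} gives
\begin{equation*}
  \dot U_y = U_y \sum_{|n|\geq 1} G_y^{(n)}(u)\,U_0^n \;+ \sum_{n_1+n_2\geq 3} B_y^{(n_1,n_2)}(u)\, U_z^{n_1} U_w^{n_2},
\end{equation*}
with analogous expressions for $\dot U_z$, $\dot U_w$, and $\dot u$; here the linear contributions $-3U_y$ and $+3U_y$ cancel and every residual factor $x^k y$, $xz$, $xw$ reassembles into a product of $U_y, U_z, U_w$. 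Meanwhile $\dot x / x \equiv -1$, so $x(t) = x(0)e^{-t}$ is exact. Thus the full flow decouples into a trivially integrable $x$-equation and an eleven-dimensional smooth autonomous $(U_y, U_z, U_w, u)$-subsystem.

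With this split, I would first integrate the $(U_y, U_z, U_w, u)$ subsystem for short times by Picard iteration. Because the right-hand sides are smooth in the monomials and vanish at the origin, the solution at time $T$ has the form $U_\eta(T) = U_\eta(0) + \sum_{|n|\geq 2} c_{\eta,n}(u(0), T)\,U_0(0)^n$, with each coefficient $c_{\eta,n}$ polynomial in $T$, polynomial in the normal-form data $\{G^{(\tilde n)}, B^{(\tilde n)}\}_{|\tilde n|\leq |n|}$, smooth in $u(0)$, and vanishing at $T = 0$. Coupled with $y(T) = e^{3T} U_y(T)$, $z(T) = e^{T} U_z(T)$, $w(T) = e^{T} U_w(T)$ and $x(T) = e^{-T}$, this gives the full forward flow starting from $\Sigma_x = \{x=1\}$.

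Next I would determine the time of flight $T$ to an exit face, and invert. On $\Sigma_y^\pm$ the exit condition $e^{3T}U_y(T) = \pm 1$ together with the implicit function theorem yields $T = -\tfrac{1}{3}\ln|y_0| + (\text{analytic corrections})$. Since $x_1 := e^{-T}$ is the exit $x$-coordinate, we have $T = -\ln x_1$ exactly, and the inversion is then purely algebraic: substitute $U_y(T) = x_1^3 y_1$, $U_z(T) = x_1 z_1$, $U_w(T) = x_1 w_1$ into the expansion for $U(0)$ in terms of $U(T)$, then read off $y_0 = x_1^3 \cdot (y_1 + \ldots)$ and similarly for $z_0, w_0, u_0$. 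This produces exactly the form \eqref{eqn:DulacMapsForward}; the $x_1^{-3}$ and $x_1^{-1}$ shifts in the $\bar B$ terms arise from the inhomogeneous ``boundary'' summands in $\dot U_\eta$, which are of lower monomial weight than the diagonal $\dot U_\eta/U_\eta$ contributions.

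The main obstacle is verifying the three structural claims (i)--(iii). Claim (i), polynomial dependence on $\ln x_1$ with vanishing constant term, will follow because $T$ enters only through Picard iteration (so the polynomial degree in $T$ is bounded by the iteration depth) and because the identity map at $T=0$ forces the constant term to vanish. Claim (ii), polynomial dependence on $\{G^{(\tilde n)}, B^{(\tilde n)}\}_{|\tilde n|\leq|n|}$ with vanishing constant term, is apparent from the graded structure of the recursion: at order $n$ the Picard step only sees normal-form data of order at most $n$. The most delicate is claim (iii), the iff statement, which needs an induction on $m$ together with the non-degeneracy of the leading term in the recursion, to rule out accidental cancellations between orders that would spuriously produce or destroy vanishing. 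This last bookkeeping step is where all the actual work is, and is carried out in full detail in \cite{duignanNormalFormsManifolds}.
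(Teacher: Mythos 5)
The paper itself offers no proof of this proposition: it is imported, with adaptation, from \cite{duignanNormalFormsManifolds}, so there is no internal argument to compare yours against. That said, your sketch is the right mechanism and is essentially the one underlying the cited result: the resonant monomials $U_y,U_z,U_w$ together with $u$ do close into an autonomous subsystem (the linear parts cancel precisely because $3\cdot(-1)+3=0$ and $(-1)+1=0$), the $x$-equation integrates exactly so that the transition time is $T=-\ln x_1$, and Picard iteration followed by inversion of the near-identity series yields the form \eqref{eqn:DulacMapsForward}, with iterated integrals of polynomials in $T$ accounting for item (i) and the graded recursion for item (ii).

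Two points need tightening. First, your assertion that $U_\eta(T)=U_\eta(0)+\sum_{|n|\geq 2}c_{\eta,n}\,U_0(0)^n$ is wrong for $\eta=z,w$: the inhomogeneous sums $\sum_{n_1+n_2\geq 1}B_z^{(n_1,n_2)}(u)\,U_y^{n_1}U_w^{n_2}$ in $\dot U_z$ (and the analogue for $\dot U_w$) begin at total degree $1$, so the corrections to $U_z,U_w$ start at first order rather than second. This is not merely cosmetic bookkeeping; it is exactly what produces the $n_1+n_2\geq 1$ boundary sums in the $z_0,w_0$ components of \eqref{eqn:DulacMapsForward}, as opposed to the $n_1+n_2\geq 3$ sum in $y_0$, and your stated Picard ansatz would not generate them. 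Second, the ``moreover'' equivalence is the only part of the statement actually used downstream (via Lemma \ref{prop:invDOrder} and the proof of Lemma \ref{lem:ApproxDulacMap}), and you defer it wholesale to the reference. The forward implication does follow from your item (ii); the converse needs the explicit observation that $\bar B^{(n)}$ equals $-(\ln x_1)\,B^{(n)}$ plus a polynomial with vanishing constant term in strictly lower-order data, so that an induction on $|n|$ closes without cancellation. As a blind reconstruction of a cited external result your proposal identifies the correct strategy, but as written it is a sketch with one concrete misstatement rather than a complete proof.
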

	
		A crucial point in Proposition \ref{prop:DirectionalDulacMaps} is the fact that, if  $ B_\eta^{(\tilde{n})}(u_0),G_\eta^{(\tilde{n})}(u_0), \eta = y,z,w,u, $ vanishes for all $ |\tilde{n}| \leq |n| $, then the coefficients, $ \bar{G}^{(\tilde{n})},\bar{B}^{(\tilde{n})} $, in the Dulac maps vanish as well. If one only knows the normal form \eqref{eqn:generalNormalForm} up to some order in $ (x,y,z,w) $ then one can infer to what order $ D $ is known. For example, if it is known that the normal form $ X_N $ in \eqref{eqn:generalNormalForm} has no resonant terms of type $ G_\eta^{(n)}(u) $ for $ |n| < m $ and $ B_\eta^{(n)}(u) $ for $ n_1 + n_2 < m + 1 $ and $ \eta = y,z,w,u $, then the Dulac maps are known to $ |n| = m $ in the $ \bar{G} $ summation and $ n_1 + n_2 = m+1 $ in the $ \bar{B} $ summation.
		
		To prove Lemma \ref{lem:ApproxDulacMap} an understanding of the inverse map $ D^{-1} $ is also required. The inverse is not continuous and as such, does not admit an asymptotic expansion in the usual sense. However, by appropriately rescaling $ (y_0,z_0,w_0) = \e(y_0,z_0,w_0) $, with $ \e \ll 1 $, one can obtain compositional inverse of $ D $ using $ \e $ as the small parameter. The asymptotic structure of $ D^{-1} $ is not, in general, as nice as the forward map $ D $. However, if the resonant terms are known to vanish to some order, then one can quickly compute the inverse directional Dulac maps up to this order. We prove this in the following lemma.
		
		\begin{lemma}\label{prop:invDOrder}
			Let $ m_1,m_2 > 0 $ and suppose that  $ G_\eta^{(n)}=0 $ for $ |n| < m_1,  \eta = x,y,z,w $, that $ B_\eta^{(n_1,n_2)} = 0 $ for $ n_1+n_2 < m_1+1, \eta = z,w $, and that $ G_y^{(n)}=0 $ for $ n_1+n_2 < m_1+3 $. Moreover, assume that $ G_u^{(n)}=0 $ for $ |n| <m_2 $.
			
			If $ (\e y_0,\e z_0,\e w_0,\e u_0) $ are scaled coordinates on $ \Sigma_0 $, then each of the directional Dulac maps have inverses asymptotic to,
			\begin{equation}\label{eqn:invDirectionalDulacMap}
			\begin{aligned}
				D_{1,y} &:\quad
				\begin{aligned}
					x_1	&\sim \e^{1/3} y_0^{1/3} \left(1 + O(\e^{m_1}\ln\e) \right) \\
					z_1	&\sim \e^{2/3} y_0^{-1/3} z_0\left(1 + O(\e^{m_1}\ln\e)\right)
				\end{aligned}\quad 
				\begin{aligned}
					w_1	&\sim \e^{2/3} y_0^{-1/3} w_0 \left(1 + O(\e^{m_1}\ln\e) \right)\\
					u_1	&\sim  u + O(\e^{m_2}\ln\e)
				\end{aligned} \\[1em]
				\vspace*{10pt}
				D_{1,z} &:\quad
				\begin{aligned}
					x_1	&\sim \e z_0 \left(1 + O(\e^{m_1}\ln\e) \right) \\
					y_1	&\sim \e^{-2} z_0^{-3} y_0 \left(1 + O(\e^{m_1}\ln\e)\right)
				\end{aligned}\quad 
				\begin{aligned}
					w_1	&\sim z_0^{-1} w_0 \left(1 + O(\e^{m_1}\ln\e) \right)\\
					u_1	&\sim  u + O(\e^{m_2}\ln\e)
				\end{aligned} \\[1em]
				D_{1,w} &:\quad
				\begin{aligned}
					x_1	&\sim \e w_0 \left(1 + O(\e^{m_1}\ln\e) \right) \\
					y_1	&\sim \e^{-2} w_0^{-3} y_0 \left(1 + O(\e^{m_1}\ln\e)\right)
				\end{aligned}\quad 
				\begin{aligned}
					z_1	&\sim w_0^{-1} z_0 \left(1 + O(\e^{m_1}\ln\e) \right)\\
					u_1	&\sim u + O(\e^{m_2}\ln\e)
				\end{aligned}
				\end{aligned}
			\end{equation}
		\end{lemma}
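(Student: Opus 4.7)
The plan is to invert the asymptotic expansions of the forward Dulac maps $D_\eta^\pm$ given in Proposition~\ref{prop:DirectionalDulacMaps}, exploiting the fact that the vanishing hypotheses on the normal form coefficients $G_\eta^{(n)}$ and $B_\eta^{(n_1,n_2)}$ propagate, by the concluding statement of that proposition, to the coefficients $\bar G^{(n)}$ and $\bar B^{(n_1,n_2)}$ of the forward Dulac maps. Under the rescaling $(y_0, z_0, w_0) \mapsto (\e y_0, \e z_0, \e w_0)$, the dominant balance on each face identifies the leading behaviour of the inverse, and the vanishing hypotheses ensure that the first nontrivial correction enters only at the claimed order.

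For $D_{1,y}^\pm$ I would fix $y_1 = \pm 1$ and begin from the relation
$\e y_0 = \pm x_1^3 \bigl(1 + \sum \bar G_y^{(n)}(u_1, \ln x_1) U_0^n\bigr) + \sum \bar B_y^{(n_1,n_2)}(u_1, \ln x_1)(U_z^0)^{n_1}(U_w^0)^{n_2}$.
The dominant balance gives $x_1 \sim \e^{1/3} y_0^{1/3}$, and then the remaining two equations force $x_1 z_1, x_1 w_1 \sim \e$, so that each monomial $U_0^n \sim \e^{|n|}$. The assumed vanishing then forces the two sums to contribute corrections of relative order $\e^{m_1}$, possibly times a bounded power of $\ln x_1 = \tfrac13 \ln\e + O(1)$. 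Solving for $x_1$ iteratively and back-substituting into the $z$ and $w$ equations yields the three stated asymptotic formulas. For the $u$-component the forward map contains only the sum $\sum \bar G_u^{(n)} U_0^n$, which by hypothesis starts at $|n| = m_2$, giving $u_1 = u_0 + O(\e^{m_2}\ln\e)$.

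The two cases $D_{1,z}^\pm$ and $D_{1,w}^\pm$ proceed analogously but with a different dominant balance. Fixing $z_1 = \pm 1$ (respectively $w_1 = \pm 1$) and using the second (respectively third) equation as the leading relation gives $x_1 \sim \e z_0$ (respectively $\e w_0$); the remaining three components follow by algebraic rearrangement of the other equations of the forward map. A direct computation checks that in every case the monomials $U_y^0, U_z^0, U_w^0$ along the inverse trajectories are each $O(\e)$, so the errors introduced by the $\bar G$ and $\bar B$ sums are again of relative order $\e^{m_1}\ln\e$, uniformly across the three faces, with the $u$-component bounded by $O(\e^{m_2}\ln\e)$ by the same reasoning as before.

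The main technical obstacle is the bookkeeping of the logarithmic factors combined with the iterative nature of the inversion. Each coefficient $\bar G^{(n)}(u_1, \ln x_1)$ is polynomial in $\ln x_1$, while $x_1$ is itself being solved implicitly in terms of $\e$ and the $\Sigma_0$ coordinates, so the inversion has to be performed in the graded ring of formal series in $\e$ with polynomial-in-$\ln\e$ coefficients. A contraction-style argument, in which one successively substitutes the leading-order expression for $x_1$ into the higher-order correction terms, closes the inversion because each successive correction is strictly of higher order in $\e$. This yields exactly the asymptotic formulas stated in \eqref{eqn:invDirectionalDulacMap}.
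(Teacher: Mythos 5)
Your proposal is correct and follows essentially the same route as the paper: both rely on the ``moreover'' clause of Proposition~\ref{prop:DirectionalDulacMaps} to push the vanishing hypotheses into the forward expansion, extract the leading dominant balance on each face after the $\e$-rescaling, verify that all three monomials $U_y^0, U_z^0, U_w^0$ are $O(\e)$ along the inverse, and then back-substitute iteratively. The only difference is presentational — the paper writes out only the $D_{1,y}$ case and declares the rest analogous, while you additionally flag the $\ln\e$ bookkeeping in the iterative inversion, which is a welcome but not essentially new elaboration.
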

		\begin{proof}
			We will prove the lemma for $ D_y^+ $ only as the other cases follow analogously. By hypothesis we have $ G_\eta^{(n)}=0 $ for $ |n| < m_1,  \eta = x,y,z,w $, that $ B_\eta^{(n_1,n_2)} = 0 $ for $ n_1+n_2 < m_1+1, \eta = z,w $, and that $ G_y^{(n)}=0 $ for $ n_1+n_2 < m_1+3 $ in the normal form \eqref{eqn:generalNormalForm}. From Proposition \ref{prop:DirectionalDulacMaps}, particular the remark after `moreover', the map $ D_y $ is asymptotic to,
			\begin{equation}\label{eqn:Dyasymp}
				\begin{aligned}
					y_0	&\sim x_1^3 \left( 1 + \sum_{|n| \geq m_1 } \bar{G}_y^{(n)}(u_1,\ln x_1) U_0^n + x_1^{-3}\sum_{n_1+n_2\geq 3 + m_1} \bar{B}_y^{(n_1,n_2)}(u_1,\ln x_1) \left(U_z^0\right)^{n_1} \left(U_w^0\right)^{n_2}  \right) \\
					z_0 &\sim x_1 \left( z_1 + z_1 \sum_{|n| \geq m_1 } \bar{G}_z^{(n)}(u_1,\ln x_1) U_0^n + x_1^{-1}\sum_{n_1+n_2\geq 1 + m_1} \bar{B}_z^{(n_1,n_2)}(u_1,\ln x_1) \left(U_y^0\right)^{n_1} \left(U_w^0\right)^{n_2}  \right) \\
					w_0 &\sim x_1 \left( w_1 + w_1 \sum_{|n| \geq m_1} \bar{G}_w^{(n)}(u_1,\ln x_1) U_0^n + x_1^{-1}\sum_{n_1+n_2\geq 1 + m_1} \bar{B}_w^{(n_1,n_2)}(u_1,\ln x_1) \left(U_y^0\right)^{n_1} \left(U_z^0\right)^{n_2}  \right)  \\
					u_0	&\sim u_1 + \sum_{|n| \geq m_2} \bar{G}_u^{(n)}(u_1,\ln x_1) U_2^{n}
				\end{aligned}
			\end{equation}
			Taking the scaled coordinates $ (\e y_0,\e z_0,\e w_0,\e u_0) $ to keep track of relative size, then this form of $ D_y^+ $ gives the first order asymptotics,
			\[ x_1 \sim \e^{1/3} y_0^{1/3},\quad z_1 \sim \e^{2/3} z_0 y_0^{-1/3},\quad  w_1 \sim \e^{2/3} w_0 y_0^{-1/3},\quad u_1 \sim u_0. \]
			it follows that each of the monomials $ U_y^0,U_z^0,U_w^0 $ hs the leading order asymptotics, 
			\[ U_y^0 = \e x_1^3 ,\quad U_z^0 = \e  x_1 z_1,\quad U_w^0 = \e x_1 w_1,\quad \]
			Substituting these two leading order asymptotics into equation \eqref{eqn:Dyasymp} concludes the lemma.
		\end{proof}
		
		At last Lemma \ref{lem:ApproxDulacMap} can be proved.
		\begin{proof}[Proof of Lemma \ref{lem:ApproxDulacMap}]
			The lemma follows provided we can show the hypothesis of Lemma \ref{prop:invDOrder} is satisfied for $ m_1 = 2, m_2=3 $. That is, we need to show $ G_\eta^{(n)}=0 $ for $ |n| < 2,  \eta = \ra,\ba,\tga,\da $, that $ B_\eta^{(n_1,n_2)} = 0 $ for $ n_1+n_2 < 3, \eta = \,w $, that $ G_{\tga}^{(n)}=0 $ for $ n_1+n_2 < 5 $, and finally that $ G_u^{(n)}=0 $ for $ |n| < 3 $.
			
			We show only the first hypothesis, $ G_\eta^{(n)}=0 $ for $ |n| < 2 $, as the others follows analogously. Observe that $ U_0^n = \ra^{3 n_1+n_2+n_3}\tga^{n_1}\ba^{n_2}\da^{n_3}  $. Now, system \eqref{eqn:ApproxNF} has no resonant terms with $ \ra \leq 4 $. It follows that all resonant terms $ G_\eta^{(n)} U_0^n  $ in the normal form \ref{eqn:generalNormalForm} for $ |n| = 1 $ must have $ G_\eta^{(n)} = 0 $ as desired.
		\end{proof}

\end{document}